\documentclass[reqno]{amsart}
\usepackage{etex}
\usepackage[T1]{fontenc}
\usepackage[a4paper,hmarginratio=1:1]{geometry}
\usepackage{amssymb,amsfonts,amsmath}
\usepackage{comment} 
\usepackage[all,arc]{xy}
\usepackage{enumerate}
\usepackage{mathrsfs,mathtools}
\usepackage{todonotes,booktabs}
\usepackage{stmaryrd}
\usepackage{marvosym}
\usepackage{graphicx}
\usepackage{pdflscape}  

\setlength{\marginparwidth}{2.6cm}
\usepackage{bbm}
\usepackage{tikz}
\usepackage{tikz-cd}
\usetikzlibrary{trees}
\usetikzlibrary[shapes]
\usetikzlibrary[arrows]
\usetikzlibrary{patterns}
\usetikzlibrary{fadings}
\usetikzlibrary{backgrounds}
\usetikzlibrary{decorations.pathreplacing}
\usetikzlibrary{decorations.pathmorphing}
\usetikzlibrary{positioning}


\newcommand{\cotimes}{\widehat{\otimes}}
\usepackage{xcolor} 
\usepackage{graphicx}
\SelectTips{cm}{10}

\usepackage[pagebackref]{hyperref}

\definecolor{dark-red}{rgb}{0.5,0.15,0.15}
\definecolor{dark-blue}{rgb}{0.15,0.15,0.6}
\definecolor{dark-green}{rgb}{0.15,0.6,0.15}
\hypersetup{
    colorlinks, linkcolor=dark-red,
    citecolor=dark-blue, urlcolor=dark-green
}

\renewcommand*{\backref}[1]{}
\renewcommand*{\backrefalt}[4]{%
  \ifcase #1 %
No citations.
  \or
(cit. on p. #2).%
  \else
(cit on pp. #2).%
  \fi%
}

\usepackage[nameinlink,capitalise,noabbrev]{cleveref}

\newtheorem{theorem}{Theorem}[section]
\newtheorem{hope}[theorem]{Hope}

\newtheorem{corollary}[theorem]{Corollary}

\newtheorem{proposition}[theorem]{Proposition}

\newtheorem{lemma}[theorem]{Lemma}

\newtheorem{conjecture}[theorem]{Conjecture}

\newtheorem*{conj*}{Hovey--Strickland Conjecture}
\newtheorem*{hope*}{Chai's Hope}
\newtheorem*{thm*}{Theorem}
\newtheorem*{thma}{Theorem A}
\newtheorem*{thmb}{Theorem B}
\newtheorem*{thmc}{Theorem C}
\newtheorem*{thmd}{Theorem D}
\newtheorem*{thme}{Theorem E}

\theoremstyle{definition}

\newtheorem{definition}[theorem]{Definition}

\theoremstyle{remark}

\newtheorem{remark}[theorem]{Remark}
 
\bibliographystyle{plain}

\makeatletter
\let\c@equation\c@theorem
\makeatother
\numberwithin{equation}{section}

\DeclareMathOperator{\Calg}{CAlg}
\DeclareMathOperator{\Top}{Top}

\DeclareMathOperator{\Sp}{Sp}
\DeclareMathOperator{\Spc}{Spc}
\DeclareMathOperator{\Hom}{Hom}

\DeclareMathOperator{\End}{End}
\DeclareMathOperator{\colim}{colim}
\DeclareMathOperator{\Ext}{Ext}
\DeclareMathOperator{\Lie}{Lie}
\DeclareMathOperator{\Tor}{Tor}
\DeclareMathOperator{\Spec}{Spec}
\DeclareMathOperator{\SpecTop}{SpecTop}
\DeclareMathOperator{\Mod}{Mod}

\DeclareMathOperator{\Comod}{Comod}

\DeclareMathOperator{\Spf}{Spf}

\DeclareMathOperator{\unit}{\mathbf{1}}

\DeclareMathOperator{\Aut}{Aut}

\DeclareMathOperator{\Fil}{Fil}
\DeclareMathOperator{\gr}{gr}

\DeclareMathOperator{\holim}{holim}
\DeclareMathOperator{\supp}{supp}

\newcommand{\cal}{\mathcal}

\newcommand{\Z}{\mathbb{Z}}
\Crefname{figure}{Figure}{Figures}
\Crefname{assu}{Assumption}{Assumptions}
\Crefname{lem}{Lemma}{Lemmas}
\Crefname{conj}{Conjecture}{Conjectures}
\Crefname{thm}{Theorem}{Theorems}
\Crefname{thma}{Theorem}{Theorems}
\Crefname{prop}{Proposition}{Propositions}

\newcommand{\noloc}{\;\mathord{:}\,}
\let\lim\relax

\DeclareMathOperator{\Tot}{Tot}

\newcommand{\fm}{\mathfrak{m}}
\newcommand{\fp}{\mathfrak{p}}

\DeclareMathOperator{\dual}{dual}

\DeclareMathOperator{\lim}{lim}

\newcommand{\htimes}{\widehat \otimes}

\DeclareMathOperator{\cC}{\mathcal{C}}
\DeclareMathOperator{\cD}{\mathcal{D}}

\newcommand{\cK}{\mathcal{K}}

\DeclareMathOperator{\cT}{\mathcal{T}}
\newcommand{\cU}{\mathcal{U}}

\newcommand{\F}{\mathbb{F}}

\DeclareMathOperator{\Gal}{Gal}

\DeclareMathOperator{\CAlg}{CAlg}

\DeclareMathOperator{\filt}{filt}
\DeclareMathOperator{\Tel}{Tel}
\DeclareMathOperator{\Ass}{Ass}

\newcommand{\thick}[1]{\langle #1\rangle}

\title{On conjectures of Hovey--Strickland and Chai}

\author{Tobias Barthel}
\address{Max Planck Institute for Mathematics, Vivatsgasse 7, 53111 Bonn, Germany}
\email{tbarthel@mpim-bonn.mpg.de}

\author{Drew Heard}
\address{Department of Mathematical Sciences, Norwegian University of Science and Technology, Trondheim}
\email{drew.k.heard@ntnu.no}

\author{Niko Naumann}
\address{Fakult{\"a}t f{\"u}r Mathematik, Universit{\"a}t Regensburg, Universit{\"a}tsstraße 31, 
93053 Regensburg, Germany}
\email{Niko.Naumann@mathematik.uni-regensburg.de}

\date{\today}
\setcounter{section}{0}

\subjclass[2010]{14L05, 55N22 (11S31, 18E30, 55P42)}
\keywords{Morava $K$-theory, Morava modules, Balmer spectrum, Lubin--Tate space, Gross--Hopkins period map}

\begin{document}

\begin{abstract}
We prove the height two case of a conjecture of Hovey and Strickland that provides a $K(n)$-local analogue of the Hopkins--Smith thick subcategory theorem. Our approach first reduces the general conjecture to a problem in arithmetic geometry posed by Chai. We then use the Gross--Hopkins period map to verify Chai's Hope at height two and all primes.  Along the way, we show that the graded commutative ring of completed cooperations for Morava $E$-theory is coherent, and that every finitely generated Morava module can be realized by a $K(n)$-local spectrum as long as $2p-2>n^2+n$. Finally, we deduce consequences of our results for descent of Balmer spectra.
\vspace{-1em}
\end{abstract}

\maketitle


\setcounter{tocdepth}{1}
\tableofcontents

\section{Introduction}

The thick subcategory theorem of Hopkins and Smith~\cite{HopkinsSmith1998Nilpotence} is arguably the most important organizational result in the stable homotopy theory of finite complexes. It classifies finite spectra up to the natural operations available on the stable homotopy category: fiber sequences, (de)suspensions, and retracts. The lattice of these equivalence classes of finite spectra resembles the height filtration on the moduli stack of one-dimensional commutative formal groups---a connection between homotopy theory and arithmetic geometry that began in the work of Quillen on complex cobordism. The present paper is concerned with a local analogue of the thick subcategory theorem, which we will relate to the $p$-adic geometry of Lubin--Tate space and in particular to work of Chai. We then exploit this relation by making use of the Gross--Hopkins  period map to establish the height two case of a conjecture of Hovey and Strickland.  

In more detail, let $K(n)$ be the Morava $K$-theory spectrum at height $n\ge 0$ and (implicit) prime $p$. The work of Hopkins and Smith exhibits the categories of $K(n)$-local spectra $\Sp_{K(n)}$ as the constituent pieces of the stable homotopy category. More precisely, $\Sp_{K(n)}$ has two distinguished properties: it is minimal in the sense that it does not admit any further non-trivial localizations, and any finite spectrum can be assembled from its $K(n)$-local pieces along the chromatic tower. From a more abstract point of view, the categories $\Sp_{K(n)}$ for $n>0$ are prominent and naturally occurring examples of compactly generated but not rigidly compactly generated tensor-triangulated categories, i.e., its unit object $L_{K(n)}S^0$ is not compact. 

The structure of the full subcategory of compact objects in $\Sp_{K(n)}$ is rather simple: it is generated by the $K(n)$-localization of any finite type $n$ spectrum $F(n)$ and does not admit any non-trivial thick subcategories. This puts into focus the larger category $\Sp_{K(n)}^{\dual}$ of all $K(n)$-locally dualizable spectra, which in particular contains the much studied $K(n)$-local Picard group \cite{hms_pic}. The goal of this paper is to analyze the global structure of $\Sp_{K(n)}^{\dual}$ and in particular approach the question of how much of the chromatic picture is captured by it. In this direction, Hovey and Strickland \cite[Page 61]{hoveystrickland_memoir} have formulated the following conjecture, which is a local analogue of the thick subcategory theorem~\cite{HopkinsSmith1998Nilpotence}:
\begin{conj*}
If $\cC$ is a thick tensor-ideal of $\Sp_{K(n)}^{\dual}$, then there exists an integer $0 \le k \le n$ and a finite type $k$ spectrum $F(k)$ such that $\cC$ is generated by $L_{K(n)}F(k)$. 
\end{conj*}

The Hovey--Strickland Conjecture is known to be true vacuously for $n=0$. For $n=1$, it can be deduced from the Iwasawa-theoretic classification of weakly dualizable objects in $\Sp_{K(1)}$ by Hahn and Mitchell \cite{HahnMitchell2007Iwasawa} or by direct verification. Our main result is:

\begin{thma}[\Cref{thm:dualizable_n=2}]
The Hovey--Strickland conjecture holds at height $n=2$ and for all primes. 
\end{thma}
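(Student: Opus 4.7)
The strategy is to reduce the Hovey--Strickland conjecture to a classification problem in the $p$-adic geometry of Lubin--Tate space---Chai's Hope---and then to verify this classification at height two via the Gross--Hopkins period map.

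First, I would pass from topology to algebra using the completed Morava $E$-theory functor $E^{\vee}_*(-) = \pi_* L_{K(n)}(E \sm -)$. A $K(n)$-local spectrum $X$ is dualizable if and only if $E^{\vee}_*(X)$ is a finitely generated $E_*$-module; coherence of the ring of completed $E$-cooperations (stated in the abstract) then provides a well-behaved category of finitely presented Morava modules carrying a tensor-triangulated structure compatible with $\Sp_{K(n)}^{\dual}$. In particular, thick tensor-ideals of $\Sp_{K(n)}^{\dual}$ correspond to thick tensor-ideals in this equivariant algebraic category.

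Second, using Balmer-style support theory for Morava modules, I would identify these thick tensor-ideals with $\bG_n$-invariant specialization-closed subsets of $\Spec(E_*)$, where $\bG_n$ is the Morava stabilizer group. The thick tensor-ideal generated by $L_{K(n)}F(k)$ corresponds to the vanishing locus of the Landweber ideal $I_k = (p,u_1,\ldots,u_{k-1})$, and the $L_{K(n)}F(k)$ are available classically as $K(n)$-localizations of generalized Moore spectra---so the full realization theorem (which holds only for $2p-2 > n^2+n$, hence would only cover $p \geq 5$ at $n=2$) is not needed for the reduction, and the argument proceeds at all primes. The Hovey--Strickland conjecture then reduces to the claim that the $V(I_k)$ for $k = 0, \ldots, n$ exhaust the $\bG_n$-invariant specialization-closed subsets of $\Spec(E_*)$, which is Chai's Hope.

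Third, I would verify Chai's Hope at $n=2$ via the Gross--Hopkins period map $\Phi$, a $\bG_2$-equivariant rigid-analytic morphism from the generic fibre of Lubin--Tate space to $\mathbb{P}^{1}$. Equivariance transports a $\bG_2$-invariant closed subset to a $\bG_2$-invariant subset of $\mathbb{P}^{1}$, where the resulting action is sufficiently non-trivial (through the division algebra) that proper invariant closed subsets collapse to the image of the supersingular/boundary stratum; pulling back via $\Phi$ and controlling its ramification then forces the original subset to be cut out by the vanishing of some $u_i$. The main obstacle is precisely this geometric step at the small primes $p=2,3$, where the Morava stabilizer group has subtle $p$-adic structure and the realization theorem fails outright; the explicit description of $\Phi$ at height two, where it amounts to a well-understood covering of $\mathbb{P}^{1}$, is what ultimately makes the verification uniformly accessible at all primes.
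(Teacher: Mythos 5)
Your overall strategy is the paper's: reduce the Hovey--Strickland conjecture to Chai's Hope (classification of $\mathbb{G}_n$-invariant radical ideals in $E_0$), verify Chai's Hope at height~$2$ via the Gross--Hopkins period map, and note that the realization theorem is not needed for the direction Chai~$\Rightarrow$~Hovey--Strickland so the argument works at all primes. That much is correct and matches Theorems~B and~4.4--4.11 of the paper. However, two steps contain genuine gaps.

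First, the reduction from topology to algebra is misformulated. You assert that coherence of $E^\vee_*E$ provides a tensor-triangulated category of Morava modules whose thick tensor-ideals \emph{correspond} to those of $\Sp_{K(n)}^{\dual}$. No such correspondence is established, and the paper explicitly declines to prove an equivalence of this kind. Coherence of cooperations plays no role in Theorem~A at all; it is only used later for descent of Balmer spectra (Theorems~D and~E). What the paper actually uses is: the smash product theorem implies that $E\,\widehat{\otimes}\,-\colon\Sp_{K(n)}^{\dual}\to\Mod_E^\omega$ detects tensor-nilpotence, Balmer's theorem then gives a \emph{surjection} $\Spc(\Mod_E^\omega)\twoheadrightarrow\Spc(\Sp_{K(n)}^{\dual})$, the Dell'Ambrogio--Stanley theorem identifies $\Spc(\Mod_E^\omega)\cong\Spec(E_0)$, and naturality shows the relevant supports are $\mathbb{G}_n$-invariant. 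This yields membership criteria (\Cref{lem:thick_equiv}) one can test after base change to $E$, not a correspondence of ideals.

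Second, your verification of Chai's Hope at $n=2$ is too vague to be a proof and invokes the wrong mechanism. The phrase ``controlling the ramification of $\Phi$'' and ``proper invariant closed subsets collapse to the supersingular/boundary stratum'' does not capture how the argument runs. The paper's argument is concretely commutative-algebraic: use $p$-adic Weierstrass preparation to write any nonzero element of $E_0\cong W\mathbb{F}_{p^2}\llbracket X\rrbracket$ as $p^n\cdot P\cdot U$ with $P$ a distinguished polynomial; observe that since $E_0$ is a two-dimensional regular local UFD, all non-maximal primes are principal; apply primary decomposition and the fact that $\mathbb{G}_2$ permutes the associated primes of an invariant ideal to reduce to showing an invariant principal ideal is $(p^n)$. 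The Gross--Hopkins period map enters only at the last step: to rule out a nonconstant distinguished polynomial $P$ generating an invariant prime, one shows a root $x$ of $P$ in the open unit disc would have an infinite $\mathbb{G}_2$-orbit, because the $\mathbb{S}_2$-orbit of $\Phi(x)$ in $\mathbb{P}^1(\mathbb{C}_p)$ has positive dimension (a Lie-algebra/tangent-space computation or, alternatively, an explicit one-parameter family through the point), forcing $P$ to have infinitely many zeros, a contradiction. None of this algebraic machinery appears in your sketch, and without it the step from ``the $\mathbb{S}_2$-action on $\mathbb{P}^1$ is non-trivial'' to ``invariant radical ideals are $I_0,I_1,I_2$'' is a gap. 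Finally, your remark that ``the realization theorem fails outright'' at $p=2,3$ is a red herring here---you already noted correctly that it is not needed for this direction.
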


This result fits into ongoing efforts to completely understand $K(2)$-local homotopy theory. As we will explain below, progress in heights $n>2$ would require substantially new ideas.

\subsection*{Morava modules and connections to arithmetic geometry}

Our approach to the Hovey--Strickland conjecture is via the arithmetic geometry of Lubin--Tate space, a connection that has proven to be fundamental in the development of local chromatic homotopy theory as envisioned by Morava \cite{morava_noetherian}. Fix an implicit prime $p$ and let $F_n$ be a height $n$ formal group law over $\F_{p^n}$. Lubin and Tate \cite{lubintate} showed that there exists a universal deformation of $F_n$ defined over the commutative ring $E_0 = W\F_{p^n}\llbracket u_1,\ldots,u_{n-1}\rrbracket$; by naturality, the automorphism group $\mathbb{G}_n = \Aut(F_n) \rtimes \Gal(\F_{p^n}/\F_p)$ acts canonically on $E_0$. The Goerss--Hopkins--Miller theorem \cite{goersshopkins} lifts $E_0$ together with its $\mathbb{G}_n$-action to an $\mathbb{E}_{\infty}$-ring spectrum $E=E_n$ called Morava $E$-theory (at height $n$), whose coefficients are given by $E_* = W\F_{p^n}\llbracket u_1,\ldots,u_{n-1}\rrbracket[u^{\pm1}]$ with $W$ being the ring of Witt vectors and $u$ of degree $-2$. A different approach to the construction of Morava $E$-theory using spectral algebraic geometry was recently presented by Lurie~\cite{lurie_ell2}. Devinatz and Hopkins \cite{devinatzhopkins_fixedpoints} exhibit the unit map $L_{K(n)}S^0 \to E_n$ as a homotopical pro-Galois extension with Galois group $\mathbb{G}_n$; informally speaking, the category of $E_n$-modules thus covers $\Sp_{K(n)}$, with descent data encoded by a compatible $\mathbb{G}_n$-action. For a more detailed survey, see for example \cite{bb_chromaticchapter}. 

It follows that the properties of a $K(n)$-local spectrum $X$ are to a large extent controlled by an algebraic invariant called its Morava module, namely $E_*^{\vee}(X) = \pi_*L_{K(n)}(E\otimes X)$. In particular, $X$ is dualizable if and only if its Morava module is finitely generated over $E_*$. Moreover, any Morava module is equipped with an action of the Morava stabilizer group $\mathbb{G}_n$, making it a (twisted) $E_*$-$\mathbb{G}_n$-module. In geometric terms, a Morava module is a $\mathbb{G}_n$-equivariant sheaf on Lubin--Tate space $\Spf(E_0)$. This motivates to study the properties of $K(n)$-locally dualizable spectra via the geometry of Lubin--Tate space together with its $\mathbb{G}_n$-action. 

In \cite{Chai1996group}, Chai studies the Zariski closures of the orbits of the action of the Morava stabilizer group $\mathbb{G}_n$ on the closed fiber of Lubin--Tate space $\Spf(E_0)$. At the end of the paper, he formulates the following: 

\begin{hope*}
Any reduced irreducible formal subscheme of $\Spf(E_0/p)$ stable under an open subgroup of $\mathbb{G}_n$ is empty or the locus $\Spf(E_0/(p,u_1,\ldots,u_{k-1}))$ over which the universal deformation has height at least $k$,  for some $1\le k \le n$. 
\end{hope*}

Chai's Hope for the full $\mathbb{G}_n$-action can be restated as providing a classification of invariant ideals in $E_0$ analogous to Landweber's~\cite{landweber} invariant prime ideal theorem for $BP_*$: The only $\mathbb{G}_n$-invariant radical ideals in $E_0$ are the $I_k=(p=u_0,\ldots,u_{k-1})$ for $0 \le k \le n$ and $E_0$. Our next result establishes a firm link between the global structure of $\Sp_{K(n)}^{\dual}$ and the arithmetic properties of Lubin--Tate space:

\begin{thmb}[\Cref{thm:chaiimplieshs} and \cref{thm:hsimplieschai}]
At a given height $n$ and prime $p$, consider the following two statements:
  \begin{enumerate}
    \item The Hovey--Strickland Conjecture holds.
    \item Chai's Hope for the full $\mathbb{G}_n$-action holds. 
  \end{enumerate}
Then Statement (2) implies Statement (1) and the converse is true provided $2p-2>n^2+n$.
\end{thmb}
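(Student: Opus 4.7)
The plan is to exploit the Morava module functor
\[
E_*^{\vee}(-) \colon \Sp_{K(n)}^{\dual} \longrightarrow \Mod_{E_*}^{\mathrm{fg}, \mathbb{G}_n}
\]
as the bridge between tensor-triangular geometry on $\Sp_{K(n)}^{\dual}$ and $\mathbb{G}_n$-equivariant algebraic geometry on $\Spf(E_0)$. Since $E_*$ is Noetherian and Morava modules of dualizable spectra are finitely generated, each $X \in \Sp_{K(n)}^{\dual}$ has a closed $\mathbb{G}_n$-invariant support $\supp(X) = V(\Ann_{E_*} E_*^{\vee}(X)) \subseteq \Spec(E_*)$, equivalently a $\mathbb{G}_n$-invariant radical ideal in $E_0$. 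Under Chai's classification (in its reformulation for the full $\mathbb{G}_n$-action), the only such invariant radical ideals are $0 = I_0, I_1, \ldots, I_n, E_0$, and these match precisely the generators $L_{K(n)}F(k)$ postulated by Hovey--Strickland.

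For the direction \emph{Chai $\Rightarrow$ Hovey--Strickland}, I fix a thick tensor ideal $\cC \subseteq \Sp_{K(n)}^{\dual}$ and associate to it the $\mathbb{G}_n$-invariant radical ideal
\[
\fI(\cC) \;=\; \bigcap_{X \in \cC}\, \sqrt{\Ann_{E_*} E_*^{\vee}(X)} \;\subseteq\; E_0,
\]
which by Chai's Hope must equal $I_k$ for some $0 \le k \le n$ (or $E_0$ in the trivial case $\cC = 0$). Since $E_*^{\vee}(L_{K(n)}F(k))$ is an iterated extension of shifts of $E_*/I_k$, the spectrum $L_{K(n)}F(k)$ itself already lies in $\cC$, yielding $\langle L_{K(n)}F(k) \rangle \subseteq \cC$. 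The reverse containment reduces to a detection lemma: any $X \in \Sp_{K(n)}^{\dual}$ with $I_k^N \cdot E_*^{\vee}(X) = 0$ for some $N$ lies in $\langle L_{K(n)}F(k) \rangle$. I would prove this by Noetherian induction on $N$, using that $\Sp_{K(n)}^{\dual}$ contains generalized Moore-type objects whose Morava modules realize $E_*/J$ for various $J$ with $\sqrt{J} = I_k$, and iteratively smashing $X$ against these until the nilpotence filtration collapses.

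For the converse under the realization bound $2p-2 > n^2+n$, I start from a $\mathbb{G}_n$-invariant radical ideal $I \subseteq E_0$. The Morava module $E_*/I$ is finitely generated, and by the realization theorem cited in the abstract it arises as $E_*^{\vee}(X)$ for some $X \in \Sp_{K(n)}^{\dual}$. Applying Hovey--Strickland to $\langle X \rangle$ produces an equality $\langle X \rangle = \langle L_{K(n)}F(k) \rangle$ for some $k$. The identity $\supp(A \otimes B) = \supp(A) \cap \supp(B)$ (valid for dualizable $A,B$ with finitely generated Morava modules) shows that $\supp(-)$ is an invariant of the thick tensor ideal, so $\supp(X) = \supp(L_{K(n)}F(k))$, giving $V(I) = V(I_k)$. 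Since both ideals are radical, $I = I_k$, establishing Chai's Hope.

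The main technical obstacle lies in the detection lemma of the first direction: converting the algebraic vanishing $I_k^N \cdot E_*^{\vee}(X) = 0$ into genuine membership of $X$ in the topologically defined thick tensor ideal $\langle L_{K(n)}F(k) \rangle$. This requires both the availability of enough generalized Moore-type objects approximating $E_*/J$ with $\sqrt{J} = I_k$, and a careful handling of the Galois descent from $E$-modules back to $K(n)$-local spectra, ensuring that the $\mathbb{G}_n$-equivariance data is compatible with the thick-tensor-ideal closure operations. In contrast, the converse direction packages essentially all subtlety into the separately stated realization theorem, which is the sole point at which the arithmetic hypothesis $2p - 2 > n^2 + n$ intervenes.
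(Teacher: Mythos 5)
Your plan for the implication $(1)\Rightarrow(2)$ is essentially the paper's: realize $E_*/I$ as a Morava module using \cref{thm:realization}, invoke the Hovey--Strickland classification for $\langle X\rangle$, and compare supports to conclude $V(I)=V(I_k)$. Where the paper is more careful is in routing the support comparison through the homeomorphism $\rho^*_{\Mod_E^\omega}\colon\Spc(\Mod_E^\omega)\xrightarrow{\sim}\Spec(E_0)$ (\cref{prop:etheorycomparison}) and the transfer principle \cref{lem:thick_equiv}; your appeal to ``$\supp(A\otimes B)=\supp(A)\cap\supp(B)$'' conflates the $tt$-support on $\Sp_{K(n)}^{\dual}$ (where that formula holds formally) with the ring-theoretic support of Morava modules (where it requires the Tor spectral sequence argument in the proof of \cref{prop:hs_dk}). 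These are fixable, so I regard this direction as sound.

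The direction $(2)\Rightarrow(1)$ has a genuine gap. Your reformulation via the invariant radical ideal $\fI(\cC)=\bigcap_{X\in\cC}\sqrt{\Ann_{E_*}E_*^\vee X}$ is a clean idea, and the containment $\cC\subseteq\langle L_{K(n)}F(k)\rangle$ does reduce, as you say, to the statement that $I_k$-torsion Morava modules force membership in $\cD_k$ --- but that statement is not the outstanding difficulty: it is already a theorem of Hovey--Strickland (cf. \cref{prop:hs_dk}, items (1)--(2)), so the ``detection lemma'' you flag as the main obstacle is not where the work is. The real obstruction is the \emph{opposite} containment $\langle L_{K(n)}F(k)\rangle\subseteq\cC$, for which you assert ``Since $E_*^\vee(L_{K(n)}F(k))$ is an iterated extension of shifts of $E_*/I_k$, the spectrum $L_{K(n)}F(k)$ itself already lies in $\cC$.'' This is a non-sequitur: knowing the internal structure of the Morava module of $L_{K(n)}F(k)$ says nothing about whether $L_{K(n)}F(k)$ lies in an arbitrary thick tensor-ideal $\cC$ with $\fI(\cC)=I_k$. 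What one actually needs is to locate some $X_0\in\cC$ with $\sqrt{\Ann(E_*^\vee X_0)}=I_k$ exactly (this part is fine: Chai's Hope plus the fact that the annihilator radicals are $\mathbb{G}_n$-invariant forces the minimum to be attained), and then to deduce $L_{K(n)}F(k)\in\langle X_0\rangle$. That deduction cannot be made purely at the level of Morava modules: it requires passing to $\Mod_E^\omega$ via the descendable extension $L_{K(n)}S^0\to E$, invoking Balmer's tensor-nilpotence surjectivity theorem (\cref{thm:tensor_nilpotence_surjectivity} / \cref{lem:thick_equiv}), and using the homeomorphism $\Spc(\Mod_E^\omega)\cong\Spec(E_0)$ to compare supports there. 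Your closing paragraph gestures at ``Galois descent from $E$-modules back to $K(n)$-local spectra,'' but this is precisely the step you have left unresolved, and it is the central mechanism in the paper's proof of \cref{thm:chaiimplieshs}.
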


Theorem B reduces the proof of Theorem A to the problem of verifying Chai's Hope at height two. This we accomplish by applying the Gross--Hopkins period map~\cite{gross_hopkins}, which relates the $\mathbb{G}_n$-action on $\Spf(E_0)$ to a linear action on rigid projective space through an equivariant map
\[ 
\Phi \colon \Spf(E_0)^{\mathrm{rig}} \otimes \mathbb{Q}_ {p^n}\longrightarrow \mathbb{P}^{n-1}_{\mathbb{Q}_ {p^n}}
\]
of rigid analytic varieties. Our proof of Theorem A relies crucially on the structure of prime ideals in the power series ring $W\F_{p^2}[\![X]\!]$ and can thus not be easily extended to heights larger than two. 

The proof of the implication $(1) \implies (2)$ in Theorem B uses a result of independent interest, namely the realizability of any finitely generated Morava module by a $K(n)$-local spectrum. The condition on the prime $p$ that appears here follows a familiar pattern in local chromatic homotopy theory: for primes $p$ such that $p-1$ does not divide $n$, the group $\mathbb{G}_n$ contains no $p$-torsion, which is the key reason that the category $\Sp_{K(n)}$ is essentially algebraic for $2p-2>n^2+n$.

\begin{thmc}[\Cref{thm:realization}]
If $2p-2>n^2+n$, then any finitely generated Morava module can be realized as the completed $E$-homology of a $K(n)$-locally dualizable spectrum. 
\end{thmc}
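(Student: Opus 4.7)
The plan is to construct $X$ by lifting a finite free resolution of $M$ in Morava modules to a filtered $K(n)$-local spectrum, with the obstructions to the lift controlled by the numerical hypothesis on $p$. I would first exploit regularity: since $E_0 = W\F_{p^n}\llbracket u_1,\ldots,u_{n-1}\rrbracket$ is a regular local Noetherian ring of Krull dimension $n$, every finitely generated graded $E_*$-module has projective dimension at most $n$. Choosing a $\mathbb{G}_n$-invariant finite generating set at each stage (possible by continuity of the action on the finitely generated $M$), I would obtain a resolution
\[
0 \to F_n \to F_{n-1} \to \cdots \to F_0 \to M \to 0
\]
in Morava modules, with each $F_i$ a finite direct sum of shifts of $E_*$ carrying the canonical $\mathbb{G}_n$-action. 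Each such $F_i$ is tautologically realized as $E_*^{\vee}(W_i)$ for a finite wedge of shifted $K(n)$-local spheres $W_i \in \Sp_{K(n)}^{\dual}$.

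Next I would realize the differentials $d_i$ as maps $\widetilde{d}_i \colon W_i \to W_{i-1}$ in $\Sp_{K(n)}$. The obstructions to existence of a lift and to compatibility of consecutive lifts are detected by the $K(n)$-local $E$-based Adams spectral sequence
\[
E_2^{s,t} = H^s_{\mathrm{cts}}\bigl(\mathbb{G}_n;\, \Hom_{E_*}(F_i, F_{i-1})_t\bigr) \;\Longrightarrow\; [W_i, \Sigma^{t-s} W_{i-1}]_{K(n)}.
\]
The hypothesis $2p-2 > n^2+n$ has two consequences: the group $\mathbb{G}_n$ has finite virtual cohomological dimension $n^2$, and the $E_2$-page admits a horizontal vanishing line beyond which no further differentials can land. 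Together these force the spectral sequence to collapse in the relevant bidegrees, so that each $d_i$ lifts to $\widetilde{d}_i$ and the composite $\widetilde{d}_{i-1} \circ \widetilde{d}_i$ admits a coherent null-homotopy.

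I would then assemble the lifts into a finite filtered object in $\Sp_{K(n)}^{\dual}$ and define $X$ as its iterated cofiber (equivalently, its totalization). Dualizability of $X$ follows from the finite length of the filtration together with closure of $\Sp_{K(n)}^{\dual}$ under finite extensions, while the associated $E_*^{\vee}$-spectral sequence collapses by construction to yield $E_*^{\vee}(X) \cong M$.

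The main obstacle will be verifying that the obstructions really vanish in the full strength required, and organising the lifts of $\widetilde{d}_i$ coherently enough to produce a genuine filtered object rather than just a diagram in the homotopy category. This is precisely the regime in which Franke--Pstragowski-style algebraicity theorems apply, and the hypothesis $2p-2 > n^2+n$ is essentially tight: it is the condition under which the derived $\infty$-category of Morava modules faithfully models $\Sp_{K(n)}$, so that algebraic objects such as resolutions can be lifted to homotopy without topological correction.
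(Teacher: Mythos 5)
Your first step already fails. You propose to resolve the Morava module $M$ by ``finite direct sums of shifts of $E_*$ carrying the canonical $\mathbb{G}_n$-action'' and to realize these as wedges of shifted $K(n)$-local spheres. But a surjection $\bigoplus_i \Sigma^{k_i}E_* \to M$ of Morava modules, with the canonical semilinear action on the source, requires a finite set of elements of $M$ generating it over $E_*$ each of which is \emph{$\mathbb{G}_n$-invariant} (up to the suspension character). Such generators do not exist in general: any invertible Morava module not isomorphic to a shift of $E_*$ --- an exotic element of the algebraic Picard group, of which there is a $\mathbb{Z}_p$ worth already for $n=1$ and $p$ odd --- admits no nonzero map from any $\Sigma^k E_*$. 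Equivalently, if your construction worked it would show that every dualizable $K(n)$-local spectrum lies in the thick subcategory generated by the unit, which is false. So the resolution you want does not exist, and allowing suspension twists cannot fix this, since that only gives a $\mathbb{Z}$'s worth of rank-one objects.

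There is a second, related misattribution. The Franke--Pstr\k{a}gowski algebraicity theorems you invoke live in the $E$-local world: they relate $\Sp_E$ (or its homotopy category) to ordinary $(E_*,E_*E)$-comodules, and in the relevant form produce a functor $\beta\colon\Comod_{E_*E}\to h\Sp_E$ with $E_*\beta(X)\cong X$. They do not directly say anything about $L$-complete comodules, i.e.\ Morava modules, and it is in fact an open question (flagged in the paper) whether they can be promoted to a statement about $\Sp_{K(n)}$ and Morava modules themselves. This is exactly why the paper's proof does not apply $\beta$ to $M$ directly; instead it replaces $M$ by its $I_n$-adic truncations $M\otimes E_*/I_n^j$, which are $I_n$-power torsion and hence honest $E_*E$-comodules, realizes each of these by a dualizable $K(n)$-local spectrum $X_j = L_{K(n)}\beta(M\otimes E_*/I_n^j)$, and then defines $X = \holim_j X_j$. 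Dualizability of the limit is not automatic from finiteness of a filtration (there is none); it is extracted from a genuine finiteness theorem, namely that $\colim_j K^* X_j$ is finitely generated, which in turn reduces via a universal coefficient spectral sequence to a local-cohomology computation over the regular ring $E_*$ (the paper's Lemma on $\colim_j \Ext_R^i(N\otimes_R R/\fm^j,k)$). Finally a Milnor sequence and $L$-completeness of $M$ give $E_*^\vee X\cong M$. Your proposal is missing both the reduction to torsion comodules, which is what makes the algebraicity machine applicable at all, and the inverse-limit-plus-finiteness step that replaces your nonexistent finite filtered object.
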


The special case of this result for invertible Morava modules is the main theorem of \cite{pstragowski_pic}, and our proof is closely modelled on Pstr\k{a}gowski's argument. In fact, motivated by the algebraicity results of \cite{bss_ultra1, pstragowski_franke, bss_ultra1.5}, we suspect that this theorem can be promoted to an equivalence of categories, but we will not pursue this question at this point.

\subsection*{Tensor-triangular context and descent}

Let us place the above results in a more abstract categorical context. The categories $\Sp_{K(n)}$ can be thought of as the completed stalks of the stable homotopy category $\Sp$ over the points of its Balmer spectrum.\footnote{More precisely, this is true up to the telescope conjecture; a priori, $\Sp_{K(n)}$ is a mild further localization of the completed stalks given by the telescopic categories.} Tensor-triangular geometry provides a wealth of structurally similar examples, for instance in algebraic geometry, modular representation theory, or equivariant stable homotopy theory.
Under suitable finiteness hypothesis on the ambient tensor triangulated category $\cT$ there is an adelic formalism that assembles $\cT$ from such completed stalk categories $\cT_{\fp}^{\wedge}$, parametrized by the points $\fp \in \Spc(\cT^{\omega})$ of the Balmer spectrum of compact objects in $\cT$, see for example \cite{bg_adelic, amgr_adelic}.

This highlights the importance of understanding the $tt$-geometry of the categories $\cT_{\fp}^{\wedge}$ for the analysis of $\cT$ itself. As for $\Sp_{K(n)}$, the categories $\cT_{\fp}^{\wedge}$ are usually compactly generated, but not rigidly compactly generated, and similar considerations as for $\Sp_{K(n)}$ lead to the study of $\Spc((\cT_{\fp}^{\wedge})^{\dual})$. However, these Balmer spectra remain mysterious; one approach to their computation is via categorical descent along a suitable morphism $\cT_{\fp}^{\wedge} \to \cU$ for a simpler $tt$-category $\cU$. 

In the example $\Sp_{K(n)}$, the proof of the smash product theorem due to Hopkins and Ravenel \cite{ravenel_orangebook} implies that the base-change functor $L_{K(n)}(E_n\otimes-)\colon \Sp_{K(n)} \to \Mod_{E_n}(\Sp_{K(n)})$ satisfies descent, where the target denotes the category of $K(n)$-local $E_n$-modules. A precise formulation of categorical descent requires an enhancement: in the language of $\infty$-categories, the coaugmented cosimplicial diagram of symmetric monoidal $\infty$-categories
\[
\xymatrix{
\Sp_{K(n)}^{\dual} \ar[r] & \Mod_{E_n}^{\omega} \ar@<0.5ex>[r] \ar@<-0.5ex>[r] & \Mod_{L_{K(n)}E_n^{\otimes 2}}^{\omega} \ar@<1ex>[r] \ar@<0ex>[r] \ar@<-1ex>[r] \ar[l] & \Mod_{L_{K(n)}E_n^{\otimes 3}}^{\omega} \ldots \ar@<0.5ex>[l] \ar@<-0.5ex>[l]
}
\]
is a limit diagram.\footnote{It is worthwhile noting that $E_n$ is not $K(n)$-locally dualizable for $n>0$.} This motivates the question whether the Balmer spectrum of $\Sp_{K(n)}^{\dual}$ can be computed in two steps: First determine the Balmer spectra of the categories $\Mod_{L_{K(n)}E_n^{\otimes k+1}}^{\omega}$ for all $k\ge 0$, and then run some kind of descent for Balmer spectra. There is good evidence for the feasibility of such an approach: Under stronger finiteness conditions than what holds in our situation, Balmer \cite{Balmer2016Separable} has indeed established a form of descent for the Balmer spectrum.

The first step in this program is partly realized by the following theorem:
\begin{thmd}
The graded commutative ring $\pi_0L_{K(n)}E^{\otimes k+1} \cong \Hom_{\mathrm{cts}}(\mathbb{G}_n^{\times k}, E_0)$ of completed cooperations for Morava $E$-theory is coherent for all $k\ge 0$. Consequently, Balmer's comparison map 
\[
\xymatrix{
\Spc(\Mod_{L_{K(n)}E^{\otimes k+1}}^{\omega}) \ar[r] & \Spec(\Hom_{\mathrm{cts}}(\mathbb{G}_n^{\times k}, E_0))
}
\]
is surjective for all $k \ge 0$ and a homeomorphism for $k=0$. 
\end{thmd}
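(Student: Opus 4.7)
The plan has three parts. First, I would use the classical identification
\[
\pi_0 L_{K(n)}E^{\otimes k+1} \;\cong\; \Hom_{\mathrm{cts}}(\mathbb{G}_n^{\times k}, E_0),
\]
coming from the Devinatz--Hopkins pro-Galois description of $L_{K(n)}S^0 \to E$. For $k=0$ this is just $E_0$, a regular local Noetherian ring, so coherence is trivial. The homeomorphism $\Spc(\Mod_E^{\omega}) \cong \Spec(E_0)$ follows from the Hopkins--Neeman--Thomason classification applied to $E$: compact $E$-modules are perfect, $\Spc(\Mod_E^\omega)$ is identified with the homogeneous spectrum of $\pi_* E$, and by 2-periodicity the latter collapses to $\Spec(E_0)$.

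For coherence at $k \geq 1$, my approach is via profinite approximation. Writing $\mathbb{G}_n = \lim_i \mathbb{G}_n/U_i$ over a cofinal system of open normal subgroups $U_i$, continuity combined with the finiteness of each $(\mathbb{G}_n/U_i)^{\times k}$ yields the filtered colimit presentation
\[
\Hom_{\mathrm{cts}}(\mathbb{G}_n^{\times k}, E_0) \;\cong\; \colim_i \, E_0^{(\mathbb{G}_n/U_i)^{\times k}}.
\]
Each term is a finite product of copies of the Noetherian ring $E_0$, hence Noetherian and \emph{a fortiori} coherent. For $U_j \subseteq U_i$, the induced transition map is the diagonal embedding attached to the surjection $(\mathbb{G}_n/U_j)^{\times k} \twoheadrightarrow (\mathbb{G}_n/U_i)^{\times k}$, which exhibits the larger ring as free of finite rank over the smaller; in particular the transitions are (faithfully) flat. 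Invoking the classical result (Glaz, \emph{Commutative Coherent Rings}, Theorem~2.3.3) that a filtered colimit of coherent rings along flat transition maps is coherent then yields coherence of $\Hom_{\mathrm{cts}}(\mathbb{G}_n^{\times k}, E_0)$.

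For Balmer's comparison map, I would invoke that $\rho_{\cT}$ is surjective whenever the graded endomorphism ring of the unit is (graded) coherent: given $\fp \in \Spec \End^*(1)$, one realizes $\fp$ as the support of a Koszul-type compact object built from a finite generating set of a finitely presented ideal, producing a tt-prime mapping to $\fp$. Applied to $\cT = \Mod_{L_{K(n)}E^{\otimes k+1}}^{\omega}$, and using 2-periodicity to pass from the homogeneous to the ungraded spectrum, this gives the asserted surjectivity. The main obstacle I anticipate lies precisely here: Balmer's original surjectivity argument is written under Noetherian hypotheses, and adapting it to the graded coherent but genuinely non-Noetherian setting requires verifying that the relevant Koszul objects, built inside the $K(n)$-localized module category, remain compact there (not merely in some underlying non-local category) and retain the expected support. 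A secondary, more cosmetic, concern is checking that the colimit presentation above accurately captures $\Hom_{\mathrm{cts}}$ with respect to both the profinite topology on $\mathbb{G}_n$ and the $\fm$-adic topology on $E_0$; once this is settled, the three ingredients combine to give the theorem.
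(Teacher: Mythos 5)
Your profinite-approximation formula for the ring of completed cooperations is incorrect, and the concern you relegate to being ``secondary, more cosmetic'' is in fact the central difficulty of the theorem. You claim
\[
\Hom_{\mathrm{cts}}(\mathbb{G}_n^{\times k}, E_0) \;\cong\; \colim_i \, E_0^{(\mathbb{G}_n/U_i)^{\times k}},
\]
but this would be the formula if $E_0$ were discrete. It isn't: $E_0 = W\F_{p^n}[\![u_1,\ldots,u_{n-1}]\!]$ carries the $\mathfrak{m}$-adic topology, and a continuous map $\mathbb{G}_n^{\times k}\to E_0$ is a compatible system of locally constant maps to $E_0/\mathfrak{m}^m$, not itself a locally constant map. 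Concretely, $\Hom_{\mathrm{cts}}(\mathbb{G}_n^{\times k}, E_0)$ is the $I_n$-adic \emph{completion} of the colimit you wrote down. Already for $n=1$ (odd $p$) your right-hand side is the ring of locally constant $\Z_p$-valued functions on $\Z_p^\times$, a proper dense subring of the ring of all continuous functions, which is what the left-hand side is. Since Glaz's colimit criterion gives you coherence only of the pre-completion, and completion of a coherent ring is not coherent in general, your argument breaks at precisely this step; the non-Noetherianity of $E_0^\vee E$ (already visible modulo $I_n$) is a symptom of this completion being genuinely nontrivial.

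The paper gets around this by isolating exactly the missing ingredient: a criterion (generalizing the Noetherian case from Bourbaki) that an $I$-adically \emph{complete} ring is coherent provided its associated graded $\gr_I$ is coherent. One then applies your colimit-along-flat-maps argument not to $A_0 = E_0^\vee E$ itself but to its associated graded, using that $A_0/I_n \cong K_0E$ is an ind-\'etale $\F_{p^n}$-algebra and that $\gr_{I_n}A_0^{\widehat\otimes k}$ is a polynomial ring over $A_0^{\widehat\otimes k}/I_n$ by pro-freeness, and finally bootstraps back up to $A_0^{\widehat\otimes k}$ via the completeness criterion. Your remaining worries are misplaced: Balmer's surjectivity of $\rho^*_\cK$ in~\cite[Theorem 7.3 and Corollary 7.4]{Balmer2010Spectra} is proved precisely under (graded) coherence, so no adaptation is needed there; and for $k=0$ the homeomorphism is Dell'Ambrogio--Stanley rather than Hopkins--Neeman--Thomason (the latter concerns discrete rings, not ring spectra), since $\pi_*E$ is even, regular, and Noetherian.
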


This result combines \cref{thm:coherenthighercoop}, \cref{prop:ektheorycomparison}, and \cref{prop:etheorycomparison}, while a proof of the isomorphism $\pi_0L_{K(n)}E^{\otimes k+1} \cong \Hom_{\mathrm{cts}}(\mathbb{G}_n^{\times k}, E_0)$ can be found in \cite[Theorem 12]{strickland_grosshopkins} or \cite[Theorem 4.11]{hovey_operations}. The statement about completed cooperations is dual to the theorem of Hovey and Strickland \cite[Section 2]{hoveystrickland_memoir} that the ring of operations $E^*E$ of Morava $E$-theory is left Noetherian. It is known that $\pi_0L_{K(n)}(E\otimes E)$ is not Noetherian and our proof does not rely on the one by Hovey and Strickland. Instead, we establish and apply a criterion \cref{lem:coherencecriterion2} for a complete ring to be coherent in terms of its associated graded, motivated by a similar criterion for Noetherian rings; our generalization might be well-known to experts, but we could not find it in the literature, so it might be of independent interest.

Returning to the question of descent for Balmer spectra, we observe that our detailed study provides the first example of descent for Balmer spectra which crucially uses that these are {\em spectral} spaces.

\begin{thme}[\Cref{prop:dualizabledescent} and \cref{prop:nodescent}]
For any height $n$ and any prime $p$, the functor $L_{K(n)}(E\otimes-)\colon \Sp_{K(n)}^{\dual} \to \Mod_{E}^{\omega}$ satisfies descent, i.e., it induces an equivalence $\Sp_{K(n)}^{\dual} \simeq \lim_{\bullet\in\Delta}\Mod_{L_{K(n)}E^{\otimes \bullet +1}}^{\omega}$ of symmetric monoidal stable $\infty$-categories. At height $n=2$, the induced diagram 
\[ 
\xymatrix{\Spc(\Mod_{L_{K(2)}(E\otimes E)}^{\omega}) \ar@<0.5ex>[r] \ar@<-0.5ex>[r] & \Spc(\Mod_E^{\omega}) \ar[r] & \Spc(\Sp_{K(2)}^{\dual})}
\]
is not a coequalizer of {\em topological} spaces, but identifies $\Spc(\Sp_{K(2)}^{\dual})$
with the coequalizer in {\em spectral} spaces.
\end{thme}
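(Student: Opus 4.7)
The first statement of Theorem E will follow from the descendability of the unit map $L_{K(n)}S^0 \to E$ in $\Sp_{K(n)}$ in the sense of Mathew, itself a consequence of the Hopkins--Ravenel smash product theorem. Descendability gives the limit formula $\Sp_{K(n)} \simeq \lim_{\bullet \in \Delta}\Mod_{L_{K(n)}E^{\otimes \bullet + 1}}(\Sp_{K(n)})$, and one restricts to dualizable objects by verifying that $X \in \Sp_{K(n)}$ is dualizable if and only if $L_{K(n)}(E \otimes X)$ is a perfect $E$-module: the forward direction is formal, while the converse uses the general principle that descent along a descendable algebra detects dualizability.

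For the height two assertions, the plan is to compute both coequalizers explicitly. Theorem A identifies $\Spc(\Sp_{K(2)}^{\dual})$ as the three-point spectral space corresponding to the chain of $\mathbb{G}_2$-invariant radical ideals $0 \subsetneq (p) \subsetneq (p, u_1)$ in $E_0 = W\F_{p^2}\llbracket u_1\rrbracket$, while \cref{prop:etheorycomparison} provides the homeomorphism $\Spc(\Mod_E^\omega) \cong \Spec(E_0)$. The spectral coequalizer is then computed via Stone duality for spectral spaces: it corresponds to the equalizer of the two induced maps on lattices of quasi-compact opens. Via the surjective comparison map of \cref{prop:ektheorycomparison}, the two parallel maps descend to the projection and action maps $\Spec(\Hom_{cts}(\mathbb{G}_2, E_0)) \rightrightarrows \Spec(E_0)$, so the equalizer lattice is precisely the $\mathbb{G}_2$-invariant quasi-compact opens of $\Spec(E_0)$. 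By Chai's Hope for the full $\mathbb{G}_2$-action, established at height two, this sublattice is the chain $\emptyset \subsetneq D(p) \subsetneq D(p) \cup D(u_1) \subsetneq \Spec(E_0)$, and its spectrum recovers the three-point chain $\Spc(\Sp_{K(2)}^{\dual})$ on the nose.

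For the topological coequalizer, the underlying set is $\Spec(E_0)/\!\sim$ with the quotient topology, where $\sim$ is the equivalence relation generated by the image of the pair $(d_0, d_1)\colon \Spc(\Mod_{L_{K(2)}(E \otimes E)}^\omega) \to \Spec(E_0) \times \Spec(E_0)$. Using the surjection of \cref{prop:ektheorycomparison} and the action-groupoid structure of $\Spec(\Hom_{cts}(\mathbb{G}_2, E_0))$, this image identifies with the $\mathbb{G}_2$-action graph, so $\sim$ is precisely the $\mathbb{G}_2$-orbit equivalence. Besides the three invariant singleton orbits $\{(0)\}, \{(p)\}, \{(p, u_1)\}$, the prime $(u_1)$ has a non-trivial $\mathbb{G}_2$-orbit---its orbit closure is all of $\Spec(E_0)$ rather than $V(u_1)$ by Chai's Hope---so its orbit constitutes a fourth disjoint class of height-one primes. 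Hence $|\Spec(E_0)/\mathbb{G}_2| \geq 4 > 3$, and the topological coequalizer is strictly larger than $\Spc(\Sp_{K(2)}^{\dual})$. The main technical obstacle is pinning down the set-theoretic image of $(d_0, d_1)$ precisely as the $\mathbb{G}_2$-action graph without further collapsing; this requires a concrete analysis of the comparison map in \cref{prop:ektheorycomparison} together with a description of $\Spec(\Hom_{cts}(\mathbb{G}_2, E_0))$ as a profinite thickening of $\mathbb{G}_2 \times \Spec(E_0)$.
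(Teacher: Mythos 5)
Your first paragraph reproduces the paper's proof of \cref{prop:dualizabledescent} in outline: descendability from the smash product theorem gives the limit formula, and then one restricts to dualizables. The paper is a bit more careful here --- it commutes duals with the limit via \cite[Proposition 4.6.1.11]{lurie-higher-algebra}, identifies $\Mod_E(\Sp_{K(n)})^{\dual}\simeq\Mod_E^{\omega}$ by Mathew's \cref{prop:dual_perfect}, and then applies a ``fully faithful in all degrees, essentially surjective in degree $0$'' argument to pass between the two cosimplicial towers --- but your ``descent detects dualizability'' slogan is the same mechanism.

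For the height-two assertions you take a genuinely different route. The paper first proves (\cref{prop:abstractcomparison}) that the vertical comparison maps $\rho_1,\rho_2$ induce a homeomorphism on \emph{topological} coequalizers, invokes Hovey to identify the bottom-row coequalizer with the orbit space $\Spec(E_0)/\mathbb{G}_2$, and finally passes to the spectrification $T$: it shows $T\to\Spc(\Sp_{K(2)}^{\dual})$ is a homeomorphism by proving that the class $[(f)]$ of each irreducible distinguished polynomial becomes dense in $\Spec(E_0)/\mathbb{G}_2$ (infiniteness of orbits, ultimately from the period map) and hence collapses onto $[0]$ under soberification. You instead compute the \emph{spectral} coequalizer directly by Stone duality, as the spectrum of the equalizer of quasi-compact open lattices, and identify this equalizer with the lattice of $\mathbb{G}_2$-invariant opens, which Chai's Hope pins down as the four-element chain $\emptyset\subsetneq D(p)\subsetneq D(p,u_1)\subsetneq\Spec(E_0)$. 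This is an attractive alternative because it bypasses the explicit description of the infinite quotient and the orbit-density argument, leaning on Chai's Hope (which is available at height two by Theorem A) in its most lattice-theoretic form. For the ``not a topological coequalizer'' claim your observation that the orbit of $(u_1)$ gives a fourth class is a clean variant of the paper's remark that polynomials of different degrees cannot lie in the same orbit.

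Two points need to be firmed up. First, to replace the top-row equalizer of lattices with the bottom-row one you need precisely the content of \cref{prop:abstractcomparison}: $\rho_1$ is a homeomorphism, $\rho_2$ is surjective (hence $\rho_2^{-1}$ is injective on open sets), and the squares commute. You gesture at the surjectivity from \cref{prop:ektheorycomparison} but do not spell out this reduction. Second, and as you yourself flag, the identification of $\{U : (\pi_1')^{-1}(U)=(\pi_2')^{-1}(U)\}$ with the $\mathbb{G}_2$-invariant opens is not a formality: one inclusion uses that for each $g\in\mathbb{G}_n$ and $\mathfrak p\in\Spec(E_0)$ the evaluation homomorphism $\mathrm{ev}_g\colon\Hom_{\mathrm{cts}}(\mathbb{G}_n,E_0)\to E_0$ produces a point of $\Spec(E_0^{\vee}E)$ witnessing $\mathfrak p\sim g\mathfrak p$, and the other inclusion uses that $\eta_L(I_k)$ and $\eta_R(I_k)$ generate the same ideal in $E_0^{\vee}E$ (invariance of $I_k$). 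Both are consequences of Hovey's description of $\Spec(E_0^{\vee}E)\rightrightarrows\Spec(E_0)$ as the coaction groupoid, which the paper cites once and for all; with that input your Stone-duality computation goes through, but the ``profinite thickening'' worry you raise at the end is exactly the content one must extract from Hovey. So the proposal is correct in spirit but should be completed by making these two reductions explicit.
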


Its proof relies on Theorem A and Theorem D.



\subsection*{Acknowledgements}

We would like to thank Andy Baker, Paul Balmer, Ching-Li Chai, Paul Goerss, Piotr Pstr\k{a}gowski, and Neil Strickland for helpful conversations, and the referee for useful comments on an earlier version. We gratefully acknowledge support by the SFB 1085 at the University of Regensburg. The first author would also like to thank the Max Planck Institute for Mathematics for its hospitality. The second author was supported in part by a grant from the Trond Mohn Foundation.

\section{Recollections on tensor triangular geometry}\label{sec:ttgeometry}

We recall some of the basics of the tensor-triangular geometry of an essentially small tensor triangulated category $\cal{K} = (\cal{K},\otimes)$.

\begin{definition}
   A {\em thick subcategory} $\cal{P}$ of $\cal{K}$ is a full triangulated subcategory that is closed under the formation of direct summands. A thick subcategory $\cal{P}\subseteq \cal{K}$ is a {\em thick tensor-ideal} if $\cal{K} \otimes \cal{P} \subseteq \cal{P}$. A thick tensor-ideal is called {\em radical} if $a^{\otimes n} \in \cal{P}$ implies $a \in \cal{P}$, and is called {\em prime} if it is a proper subcategory of $\cal{K}$ and if $a \otimes b \in \cal{P}$ implies $a \in \cal{P}$ or $b \in \cal{P}$. 
\end{definition} 
 
Given a good notion of prime ideal, we can now define the Balmer spectrum of $\cal{K}$ and the support of objects in $\cal{K}$, following \cite{Balmer2005spectrum}. 
 
\begin{definition}
   The {\em Balmer spectrum of} $\cal{K}$, denoted $\Spc(\cal{K})$, is the set of all prime tensor-ideals of $\cal{K}$:
   \[
\Spc(\cal{K}) = \{ \cal{P} \subseteq \cal{K} \mid \cal{P} \text{ is a prime tensor-ideal} \}. 
   \]
   If $\cC$ is a small symmetric monoidal stable $\infty$-category, we also write $\Spc(\cC)$ for the Balmer spectrum of the homotopy category of $\cC$. For any object $X \in \cal{K}$, {\em the support of} $X$ is defined as
   \[
\supp(X) = \{ \cal{P} \in \Spc(\cal{K}) \mid X \not \in \cal{P} \}. 
   \]
\end{definition}
 
There is a topology on $\Spc(\cal{K})$ with $\{\supp(X) \}_{X \in \cal{K}}$ forming a basis of closed subsets. 
The Balmer spectrum is contravariantly functorial with respect to exact tensor triangulated functors $F \colon \cK \to \cal{L}$. Indeed, there is a natural continuous map $\Spc(F) \colon \Spc(\cal{L}) \to \Spc(\cK)$ defined by $\cal{P} \mapsto F^{-1}(\cal{P})$. It has the property that $\supp(F(X)) = \Spc(F)^{-1}(\supp(X))$, see \cite[Proposition 3.6]{Balmer2005spectrum}. 

The interest in the Balmer spectrum comes from its relation to the classification of thick tensor-ideals in $\cal{K}$. For the following, we recall that a Thomason subset of $\Spc(\cal{K})$ is a subset that is a union of closed subsets, each with quasi-compact open complement. The next result is
\cite[Theorem 4.1]{Balmer2005spectrum}.

\begin{theorem}[Balmer]\label{thm:tt_classification}
 Let $\cal{K}$ be an essentially small tensor triangulated category. The assignment
\[
\cal{Y} \mapsto \cal{K}_{\cal{Y}} = \{ X \in \cal{K} \mid \supp(X) \subseteq \cal{Y} \} 
\]
induces an inclusion-preserving bijection between Thomason subsets $\cal{Y}$ of $\Spc(\cal{K})$ and radical thick tensor-ideals of $\cal{K}$. The inverse is given by $\cal{I} \mapsto \supp(\cal{I}) = \cup_{X \in \cal{I}} \supp(X)$.
\end{theorem}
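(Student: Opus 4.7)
The plan is to follow Balmer's original strategy, which parallels classical commutative algebra with the Balmer spectrum playing the role of $\Spec$. The heart of the argument is to first establish a robust theory of support on $\cK$, and then to use this to construct the bijection between Thomason subsets and radical thick tensor-ideals. The key formal properties of support one needs are: $\supp(0) = \emptyset$, $\supp(\unit) = \Spc(\cK)$, stability under suspension and summands, the triangle inequality $\supp(b) \subseteq \supp(a) \cup \supp(c)$ for a distinguished triangle $a \to b \to c$, and most importantly the tensor product formula $\supp(a \otimes b) = \supp(a) \cap \supp(b)$. The inclusion $\subseteq$ in the tensor product formula is immediate from primality, but the reverse inclusion is genuinely subtle: given $\fp \not\ni a$ and $\fp \not\ni b$, one must produce a prime containing $a\otimes b$ but neither $a$ nor $b$. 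I would do this by a Zorn's lemma argument, showing that a thick tensor-ideal maximal among those not containing any tensor power of $a$ nor of $b$ is automatically prime.

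Once this support theory is in place, the fact that $\cY \mapsto \cK_\cY$ takes values in radical thick tensor-ideals is formal: containment of $\supp(-)$ in a fixed set is preserved under suspensions, retracts, cofibers (by the triangle inequality), and tensoring with arbitrary objects (by the tensor product formula), while $\supp(a^{\otimes n}) = \supp(a)$ gives the radical property. Dually, $\supp(\cI) = \bigcup_{X \in \cI} \supp(X)$ is Thomason because each individual $\supp(X)$ is closed with quasi-compact open complement $U(X) = \{\fp \mid X \in \fp\}$, essentially by the definition of the topology on $\Spc(\cK)$, and Thomason subsets are stable under arbitrary unions.

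To see the two assignments are mutually inverse, I would argue as follows. For a Thomason subset $\cY$, the inclusion $\supp(\cK_\cY) \subseteq \cY$ is trivial, and the reverse inclusion uses that every closed subset with quasi-compact open complement is realized as $\supp(X)$ for some $X \in \cK$, so $\cY$ is a union of supports of objects lying in $\cK_\cY$. For a radical thick tensor-ideal $\cI$, the inclusion $\cI \subseteq \cK_{\supp(\cI)}$ is tautological. Conversely, suppose $\supp(a) \subseteq \supp(\cI)$; since $\supp(a)$ has quasi-compact complement, the cover $\supp(a) \subseteq \bigcup_{X \in \cI}\supp(X)$ admits a finite subcover, and passing to the direct sum gives a single $X \in \cI$ with $\supp(a) \subseteq \supp(X)$. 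The final step, and the second main obstacle, is to deduce $a \in \cI$ from the containment $\supp(a) \subseteq \supp(X)$ and the radical assumption on $\cI$: this amounts to Balmer's ``abstract nilpotence'' lemma, showing that $\supp(a) \subseteq \supp(X)$ forces some tensor power $a^{\otimes n}$ to lie in the thick tensor-ideal generated by $X$. I would prove this by applying the tensor product formula to the cofiber $C$ of a suitable map built from $X$, deducing $\supp(a \otimes C) = \emptyset$ and hence, via the Zorn argument again (no prime avoids $a \otimes C$), that $a \otimes C$ is tensor-nilpotent; radicality of $\cI$ then yields $a \in \cI$.

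The main obstacle throughout is really the tensor product formula together with its consequence that objects with empty support are tensor-nilpotent, since these are the only places where one has to leave the realm of purely formal manipulations with triangles and ideals and genuinely construct prime tensor-ideals via Zorn's lemma.
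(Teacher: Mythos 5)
Note first that the paper does not prove this result; it is quoted verbatim from Balmer, \emph{The spectrum of prime ideals in tensor triangulated categories}, so there is no in-text proof to compare against, and your proposal is effectively a reconstruction of Balmer's argument. The overall skeleton you sketch --- support data, a Zorn's-lemma existence argument for primes, a realization lemma for Thomason-closed subsets, and then the two-sided verification of the bijection --- is indeed Balmer's strategy, and most of the outline is sound.

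However, there is a genuine misattribution of where the difficulty lies. You assert that one inclusion of the tensor formula $\supp(a\otimes b)=\supp(a)\cap\supp(b)$ is ``genuinely subtle'' and requires a Zorn's-lemma construction of a new prime. In fact \emph{both} inclusions are immediate from the definitions: given any $\cal{P}\in\Spc(\cK)$, the direction $\supp(a\otimes b)\subseteq\supp(a)\cap\supp(b)$ is just the statement that $\cal{P}$ is a tensor-ideal (if $a\in\cal{P}$ then $a\otimes b\in\cal{P}$), and the reverse inclusion is exactly the definition of primality of $\cal{P}$ (if $a\otimes b\in\cal{P}$ then $a\in\cal{P}$ or $b\in\cal{P}$). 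One never has to ``produce a prime containing $a\otimes b$ but neither $a$ nor $b$''; the prime $\cal{P}$ is already in hand, and such a prime would in fact contradict primality. The Zorn's-lemma argument you correctly anticipate is needed, but at two different points: first, to show $\Spc(\cK)$ is non-empty at all (equivalently, that $\supp(a)=\emptyset$ forces $a$ to be $\otimes$-nilpotent), and second, in the form of the abstract Nullstellensatz $\sqrt{\cal{J}}=\bigcap\{\cal{P}\in\Spc(\cK)\mid\cal{P}\supseteq\cal{J}\}$, which is what ultimately drives the inclusion $\cK_{\supp(\cal{I})}\subseteq\cal{I}$.

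Two further steps deserve tightening. The finite-subcover step is stated with an insufficient justification: the quasi-compactness of the \emph{open complement} of $\supp(a)$ does not by itself let you extract a finite subcover from a covering by \emph{closed} sets. The claim is true, but the proof passes through the constructible (patch) topology on the spectral space $\Spc(\cK)$, in which both $\supp(a)$ and the $\supp(X)$ become clopen. Alternatively (and this is Balmer's actual route) one can avoid the subcover entirely: $\supp(a)\subseteq\supp(\cal{I})=\{\cal{P}\mid\cal{I}\not\subseteq\cal{P}\}$ is, by contraposition, equivalent to $a\in\cal{P}$ for all primes $\cal{P}\supseteq\cal{I}$, which is $a\in\sqrt{\cal{I}}=\cal{I}$ directly by the Nullstellensatz. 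Finally, the sketch of the ``abstract nilpotence lemma'' via a cofiber $C$ built from $X$ with $\supp(a\otimes C)=\emptyset$ is plausible in a rigid category (where one can take $C=\cofib(DX\otimes X\to\unit)$), but Balmer's theorem makes no rigidity assumption, and in the non-rigid setting this Koszul-type construction is not available; the Nullstellensatz formulation sidesteps this.
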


Suppose the symmetric monoidal structure on $\cal{K}$ is closed with internal mapping object denoted by $\underline{\Hom}$. Then $\cal{K}$ is said to be \emph{rigid} if each object is strongly dualizable, i.e., each $X \in \cal{K}$ has a dual $DX = \underline{\Hom}(X,\unit)$ and the canonical map $DX \otimes Y \to \underline{\Hom}(X,Y)$ is an equivalence for every $Y\in\cal{K}$. In this case, every thick tensor-ideal is automatically radical \cite[Proposition 2.4]{Balmer2007Supports}, and so the previous theorem gives a complete description of the thick tensor-ideals of $\cal{K}$ in terms of $\Spc(\cal{K})$.

Let $\langle X \rangle$ denote the thick tensor-ideal generated by a given $X\in\cal{K}$. \Cref{thm:tt_classification} has the following consequence. 

\begin{corollary}\label{lem:support}
  Suppose that $\cK$ is rigid. For $X,Y \in \cK$ we have $X \in \langle Y \rangle$ if and only if $\supp(X) \subseteq \supp(Y)$. 
\end{corollary}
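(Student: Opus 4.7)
The plan is to deduce the corollary directly from the Balmer classification theorem (\Cref{thm:tt_classification}), using rigidity only to guarantee that the thick tensor-ideal $\langle Y \rangle$ is radical and hence lies in the bijection.

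For the forward direction, I would first show that the full subcategory
\[
\cal{J} = \{Z \in \cK \noloc \supp(Z) \subseteq \supp(Y)\}
\]
is a thick tensor-ideal. Closure under triangles, suspensions, and retracts follows from the standard properties of $\supp$ established in \cite{Balmer2005spectrum} (supports of cones are contained in the union of supports, supports of summands in the support of the whole). Closure under tensoring with arbitrary objects of $\cK$ follows from $\supp(A\otimes B) \subseteq \supp(B)$, which is immediate from the definition since any prime containing $B$ contains $A\otimes B$. Since $Y\in\cal{J}$ trivially, we get $\langle Y\rangle \subseteq \cal{J}$, so $X\in\langle Y\rangle$ implies $\supp(X) \subseteq \supp(Y)$.

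For the reverse direction, note first that $\supp(Y)$ is a Thomason subset of $\Spc(\cK)$: in Balmer's setup every $\supp(Y)$ is closed with quasi-compact open complement. Since $\cK$ is rigid, \cite[Proposition 2.4]{Balmer2007Supports} (cited in the excerpt) implies that every thick tensor-ideal of $\cK$ is radical, so \Cref{thm:tt_classification} applies to $\langle Y\rangle$. Combining the forward direction with the fact that $Y\in\langle Y\rangle$ yields
\[
\supp(\langle Y\rangle) \;=\; \bigcup_{Z\in\langle Y\rangle}\supp(Z) \;=\; \supp(Y).
\]
Under the bijection of \Cref{thm:tt_classification}, the radical thick tensor-ideals $\langle Y\rangle$ and $\cK_{\supp(Y)} = \{Z\in\cK\noloc \supp(Z)\subseteq \supp(Y)\}$ both correspond to the Thomason subset $\supp(Y)$, hence coincide. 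Therefore $\supp(X) \subseteq \supp(Y)$ implies $X\in \cK_{\supp(Y)} = \langle Y\rangle$.

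There is essentially no hard step here: the argument is a clean application of the classification theorem together with rigidity, and the only thing to keep in mind is invoking rigidity precisely where it is needed, namely to ensure $\langle Y\rangle$ is automatically radical so that the bijection with Thomason subsets applies.
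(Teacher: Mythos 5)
Your proof is correct and follows the route the paper implicitly intends: the paper states the corollary as a direct consequence of \Cref{thm:tt_classification} without spelling out the argument, and your derivation---using rigidity to guarantee $\langle Y\rangle$ is radical and then matching it with $\cK_{\supp(Y)}$ under Balmer's bijection---is exactly the standard unwinding of that implication.
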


We recall the following definition. 

\begin{definition}
  Suppose $F \colon \cK \to \cal{L}$ is an exact tensor triangulated functor between essentially small tensor triangulated categories. We say that $F$ {\em detects tensor-nilpotence} 
 if every morphism $f \colon X \to Y$ in $\cK$ such that $F(f) = 0$ satisfies $f^{\otimes n} = 0$ for some $n \ge 1$. 
\end{definition}

The next result is shown in \cite[Theorem 1.3]{Balmer2018surjectivity}. 

\begin{theorem}[Balmer]\label{thm:tensor_nilpotence_surjectivity}
  Suppose  $F \colon \cK \to \cal{L}$ detects tensor-nilpotence and that $\cK$ is rigid, then $\Spc(F) \colon \Spc(\cal{L}) \to \Spc(\cK)$ is surjective. 
\end{theorem}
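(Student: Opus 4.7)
The plan is to show that for every prime $\fp \in \Spc(\cK)$ there exists a prime $\fq \in \Spc(\cL)$ with $F^{-1}(\fq) = \fp$. By the standard construction of primes in tensor triangulated categories via Zorn's lemma \cite[Lemma 2.2]{Balmer2005spectrum}, it suffices to exhibit a thick tensor-ideal of $\cL$ containing $F(\fp)$ and disjoint from the multiplicative family $\{F(X)\,:\,X\notin \fp\}$: any such thick tensor-ideal that is maximal with these two properties is automatically prime, and its preimage under $F$ is forced to equal $\fp$. Concretely, then, one must show that the thick tensor-ideal $\langle F(\fp)\rangle \subseteq \cL$ does not contain $F(X)$ for any $X\notin\fp$.

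Suppose for contradiction that $F(X) \in \langle F(\fp)\rangle$ for some $X\notin\fp$. Form the Verdier quotient $\bar{\cL} := \cL/\langle F(\fp)\rangle$ and consider the induced tensor triangulated functor $\bar F \colon \cK/\fp \to \bar{\cL}$. By construction $\bar F(X) = 0$, so $\bar F(\id_X) = 0$. The key step is to show that $\bar F$ still detects tensor-nilpotence. Granting this, applying the detection property to $\id_X$ yields $\id_{X^{\otimes n}} = 0$ in $\cK/\fp$ for some $n\ge 1$, i.e.\ $X^{\otimes n}\in \fp$. Since $\cK$ is rigid, every thick tensor-ideal of $\cK$ is radical \cite[Proposition 2.4]{Balmer2007Supports}, so $X\in\fp$, contradicting the choice of $X$.

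The main obstacle is thus the descent of the tensor-nilpotence detection property from $F$ to $\bar F$. A morphism $\bar f$ in $\cK/\fp$ that becomes zero under $\bar F$ can, via the calculus of fractions defining the Verdier quotients, be represented by a roof in $\cK$ whose image in $\cL$ factors through an object of $\langle F(\fp)\rangle$. Unpacking this factorization, writing the relevant object of $\langle F(\fp)\rangle$ as a finite iteration of cones and retracts built from objects of the form $F(P)\otimes Z$ with $P\in\fp$, and using rigidity of $\cK$ to dualize the resulting diagrams, one produces a morphism in $\cK$ annihilated by $F$. The hypothesis then yields a tensor-power vanishing in $\cK$, which bookkeeping of tensor factors turns into a tensor-power of $f$ that factors through an object of $\fp$, hence vanishes in $\cK/\fp$. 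Once this descent statement is verified, surjectivity follows as above; the remainder of the argument is purely formal.
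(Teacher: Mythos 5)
The paper does not prove this theorem: it simply cites \cite[Theorem 1.3]{Balmer2018surjectivity}. Assessing your proposal on its own terms: the Zorn reduction is correct and standard. Since $\{F(X)\,:\,X\notin\fp\}$ is $\otimes$-multiplicative (because $\fp$ is prime), any thick $\otimes$-ideal maximal among those containing $\langle F(\fp)\rangle$ and missing that family is prime with preimage $\fp$, so everything comes down to showing $F(X)\in\langle F(\fp)\rangle\Rightarrow X\in\fp$. Framing this via the Verdier quotients $\cK/\fp$ and $\cL/\langle F(\fp)\rangle$ and proving that $\bar F$ is still conservative up to $\otimes$-nilpotence is a reasonable structure.

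However, the step you single out as ``the main obstacle'' is where all the content lies, and your sketch does not close it. The auxiliary tensor factors $Z$ appearing when one writes an object of $\langle F(\fp)\rangle$ as an iterated extension and retract of objects $F(P)\otimes Z$ live in $\cL$ and are in general not in the essential image of $F$. Rigidity of $\cK$ gives you no control over them, so ``dualizing the resulting diagrams'' does not produce a morphism \emph{in} $\cK$ to which the hypothesis on $F$ could be applied; the roof's target in the factorization simply refuses to descend. What is actually needed is a d\'evissage/Koszul lemma of Hopkins--Neeman--Thomason type: for a dualizable $P$, membership of an object $Y$ in $\langle P\rangle$ is equivalent to the $\otimes$-nilpotence of $\id_Y$ tensored with the canonical map appearing in the (co)evaluation triangle for $P$ (equivalently, to the vanishing of $Y$ after inverting that map). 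Applying this first in $\cL$ converts ``$F(h)$ factors through $\langle F(P)\rangle$'' into a statement that a single, explicit morphism of the form $F(\sigma_P^{\otimes n}\otimes h)$ vanishes; then the $\otimes$-nilpotence hypothesis on $F$ gives $\sigma_P^{\otimes nm}\otimes h^{\otimes m}=0$ in $\cK$; and one finishes by observing that $\sigma_P$ becomes invertible in $\cK/\fp$ because $P\otimes DP$ dies there. This is exactly the mechanism your ``bookkeeping of tensor factors'' is reaching for, but without the d\'evissage lemma and the identification of the morphism that becomes an isomorphism modulo $\fp$, the argument does not go through. I would rewrite the key step to make this explicit rather than asserting descent of nilpotence detection as though it followed from formal manipulations.
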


We deduce the following. 

\begin{corollary}\label{lem:thick_equiv}
Suppose  $F \colon \cK \to \cal{L}$ detects tensor-nilpotence and that $\cK$ is rigid, then the following conditions are equivalent for $X,Y \in \cK$: 
\begin{enumerate}
  \item $X \in \langle Y \rangle$. 
  \item $F(X) \in \langle F(Y) \rangle$.
\end{enumerate}
\end{corollary}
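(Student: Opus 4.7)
The plan is to prove the two directions separately. The implication (1) $\Rightarrow$ (2) will be essentially formal: since $F$ is an exact symmetric monoidal functor, the preimage $F^{-1}(\langle F(Y)\rangle)$ is a thick tensor-ideal of $\cK$ that contains $Y$, and hence contains $\langle Y\rangle$. Consequently $X \in \langle Y\rangle$ forces $F(X)\in \langle F(Y)\rangle$. This direction uses neither rigidity nor the detection of tensor-nilpotence.

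For the converse (2) $\Rightarrow$ (1), the strategy is to translate membership in a thick tensor-ideal into a containment of supports, apply functoriality of the Balmer spectrum, and then invoke surjectivity. First, from $F(X)\in \langle F(Y)\rangle$ I will extract the inclusion $\supp(F(X))\subseteq \supp(F(Y))$ in $\Spc(\cal{L})$: the class $\{Z\in \cal{L} \mid \supp(Z)\subseteq \supp(F(Y))\}$ is a (radical) thick tensor-ideal containing $F(Y)$, so it contains $\langle F(Y)\rangle$. Crucially, this step does not require $\cal{L}$ to be rigid, which matters since no such hypothesis is made. Next, using the naturality identity $\supp(F(-))=\Spc(F)^{-1}(\supp(-))$ recalled in the excerpt from \cite[Proposition 3.6]{Balmer2005spectrum}, the inclusion above rewrites as
\[
\Spc(F)^{-1}(\supp(X))\subseteq \Spc(F)^{-1}(\supp(Y)).
\]
Now \Cref{thm:tensor_nilpotence_surjectivity} ensures that $\Spc(F)$ is surjective, and for any surjection $f\colon A\to B$ of sets one has the elementary implication $f^{-1}(S)\subseteq f^{-1}(T)\Rightarrow S\subseteq T$. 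Hence $\supp(X)\subseteq \supp(Y)$ in $\Spc(\cK)$, and \Cref{lem:support}---which uses rigidity of $\cK$---converts this support containment back into $X\in \langle Y\rangle$.

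I do not anticipate a serious obstacle: the argument is a clean chain through \Cref{thm:tt_classification}, \Cref{lem:support}, and \Cref{thm:tensor_nilpotence_surjectivity}. The main conceptual point to respect is the asymmetric role of rigidity, which is imposed on $\cK$ but not on $\cal{L}$; so the ideal-to-support translation in the hard direction must be performed in $\cK$ via \Cref{lem:support}, while the reverse translation in $\cal{L}$ uses only the soft one-way inclusion that requires no hypothesis on $\cal{L}$.
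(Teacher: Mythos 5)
Your argument is correct and is essentially the paper's own proof: both directions are handled the same way, with $(2)\Rightarrow(1)$ translating ideal membership in $\cal{L}$ into a support containment, transporting it along $\Spc(F)$, and invoking the surjectivity from \Cref{thm:tensor_nilpotence_surjectivity} and the rigidity-based \Cref{lem:support} to return to an ideal membership in $\cK$. The only cosmetic difference is that you phrase the surjectivity step as $f^{-1}(S)\subseteq f^{-1}(T)\Rightarrow S\subseteq T$ while the paper writes it as $\supp(X)=\Spc(F)(\supp(F(X)))\subseteq\Spc(F)(\supp(F(Y)))=\supp(Y)$, which are equivalent; your remark that the passage from $F(X)\in\langle F(Y)\rangle$ to $\supp(F(X))\subseteq\supp(F(Y))$ needs no rigidity hypothesis on $\cal{L}$ is a correct and worthwhile point the paper leaves implicit.
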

\begin{proof}
The implication $\mathit{(1)}\Rightarrow \mathit{(2)}$ is clear and in fact valid for every exact tensor functor $F$.
Conversely, assuming $\mathit{(2)}$, we deduce
\begin{equation}\label{eq:thick_equiv}
\supp(X)=\Spc(F)(\supp(F(X)))\subseteq \Spc(F)(\supp(F(Y)))=\supp(Y),
\end{equation}
where the inclusion follows from our assumption $\mathit{(2)}$, while the first and last equality use the surjectivity of $\Spc(F)$, guaranteed by
\Cref{thm:tensor_nilpotence_surjectivity}. Finally, the containment \eqref{eq:thick_equiv} is equivalent to $\mathit{(1)}$ by \Cref{lem:support} above.
\end{proof}

There are natural comparison maps between $\Spc(\cK)$ and the Zariski spectrum of the endomorphism ring $R^0_{\cK} = \End_{\cK}(\unit)$ as well as the homogeneous spectrum of the graded ring $R_{\cK}^* = \Hom^*_{\cK}(\unit,\unit)$. Under reasonable conditions, these maps are additionally surjective \cite[Theorem 7.3 and Corollary 7.4]{Balmer2010Spectra}.

\begin{theorem}[Balmer]\label{thm:balmer_comparison}
  There exist two continuous maps, natural in $\cK$, 
  \[
\rho_{\cK}^{\ast} \colon \Spc(\cK) \to \Spec^h(R_{\cK}^*) \quad \text{ and } \quad \rho_{\cK} \colon \Spc(\cK) \to \Spec(R_{\cK}^0).
  \]
  Moreover, if $R_{\cK}^*$ is (graded) coherent, then $\rho_{\cK}^*$ and $\rho_{\cK}$ are surjective. 
\end{theorem}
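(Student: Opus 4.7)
The plan is to follow Balmer's approach from \cite{Balmer2010Spectra}. For $\cP \in \Spc(\cK)$ I would define
\[ \rho_\cK^*(\cP) := \{ f \in R_\cK^* \mid \cofib(f) \notin \cP \}, \]
and similarly $\rho_\cK$ by restricting to degree zero. The key observation is that $f \in \rho_\cK^*(\cP)$ if and only if $\cP \in \supp(\cofib(f))$, so membership is controlled by the closed subsets $V(f) := \supp(\cofib(f)) \subseteq \Spc(\cK)$. The octahedral axiom applied to a composable pair produces an exact triangle $\cofib(f) \to \cofib(gf) \to \cofib(g)$, which gives $V(gf) = V(g) \cup V(f)$; a standard diagonal/codiagonal argument provides $V(f+g) \subseteq V(f) \cup V(g)$; together with $\unit \notin \cP$ these translate into the assertion that $\rho_\cK^*(\cP)$ is a homogeneous prime ideal. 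Continuity is then immediate, since the preimage of the principal Zariski closed set $\{\fp \mid f \in \fp\}$ is precisely $V(f)$, which is closed in $\Spc(\cK)$. Naturality in $\cK$ follows from the fact that exact tensor functors preserve cofibers.

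The main difficulty is surjectivity under (graded) coherence. Given a homogeneous prime $\fp \subseteq R_\cK^*$, I would construct $\cP \in \Spc(\cK)$ with $\rho_\cK^*(\cP) = \fp$ by studying the radical thick tensor-ideal
\[ \cJ_\fp := \tthick{\cofib(s) \mid s \in \fp \text{ homogeneous}} \subseteq \cK. \]
The crucial step is to show that $\cofib(t) \notin \cJ_\fp$ for every homogeneous $t \in R_\cK^* \setminus \fp$. Once this is established, a standard Zorn's lemma argument produces a thick tensor-ideal $\cP \supseteq \cJ_\fp$ which is maximal among those containing no $\cofib(t)$ with $t \notin \fp$; such a $\cP$ is automatically prime, because any proper enlargement would have to absorb some forbidden $\cofib(t)$, and from the construction one reads off $\rho_\cK^*(\cP) = \fp$. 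The ungraded surjectivity of $\rho_\cK$ then follows formally.

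The hard part is the non-containment assertion, and it is exactly here that coherence intervenes. Without finiteness, $\cJ_\fp$ could in principle swallow an arbitrary $\cofib(t)$ via an uncontrolled iterated extension, and there would be no obvious way to block this. Coherence of $R_\cK^*$ guarantees that every finitely generated homogeneous ideal has a finitely generated module of syzygies, so any witness $\cofib(t) \in \tthick{\cofib(s_1),\ldots,\cofib(s_n)}$ with $s_1,\ldots,s_n \in \fp$ should be convertible, via the iterated cofiber sequences and the action of $R_\cK^*$ on Ext-groups between the $\cofib(s_i)$, into an honest algebraic relation forcing a power of $t$ into $(s_1,\ldots,s_n) \subseteq \fp$, contradicting $t \notin \fp$. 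This inductive bookkeeping replaces the purely algebraic manipulations available in the graded Noetherian prototype, and it is the step where I expect the bulk of the technical work to lie.
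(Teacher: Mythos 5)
The paper does not supply a proof here; the statement is quoted from Balmer (\cite{Balmer2010Spectra}, Theorem 7.3 and Corollary 7.4). Your overall architecture does match Balmer's (define $\rho^*_\cK$ via cones, establish primality from support relations among the $V(f):=\supp(\cofib(f))$, continuity because preimages of principal Zariski closed sets are the $V(f)$, surjectivity by a Zorn argument on a thick $\otimes$-ideal attached to $\fp$), but two of your key relations are oriented backwards, and the second error makes the surjectivity argument fail. For primality: in order for $\rho^*_\cK(\cP)=\{f\mid\cofib(f)\notin\cP\}$ to be closed under addition you need $V(f)\cap V(g)\subseteq V(f+g)$, i.e.\ $\cofib(f)\otimes\cofib(g)\in\langle\cofib(f+g)\rangle$; this holds because $f+g$ acts as zero on $\cofib(f)\otimes\cofib(g)$, so that object is a retract of its tensor with $\cofib(f+g)$. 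Your claimed $V(f+g)\subseteq V(f)\cup V(g)$ is false: in $D^{\mathrm{perf}}(\Z)$ with $f=2$, $g=3$ one has $V(5)=\{(5)\}\not\subseteq\{(2),(3)\}=V(2)\cup V(3)$.

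For surjectivity, you seek $\cP$ with $\rho^*_\cK(\cP)=\fp$, which by your own definition of $\rho^*_\cK$ means $\cofib(s)\notin\cP$ for $s\in\fp$ and $\cofib(t)\in\cP$ for $t\notin\fp$. Thus the thick $\otimes$-ideal to start from is $\cJ_\fp=\langle\cofib(t)\mid t\notin\fp\rangle$, and the crucial non-containment is that $\cJ_\fp$ avoids $\cofib(s)$ for every $s\in\fp$. As written, with $\cP\supseteq\langle\cofib(s)\mid s\in\fp\rangle$ and $\cofib(t)\notin\cP$ for $t\notin\fp$, you would read off $\rho^*_\cK(\cP)\supseteq R^*_\cK\setminus\fp$, which is impossible for a proper ideal since it would then contain $1$. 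After fixing the orientation, your heuristic that a membership $\cofib(s)\in\langle\cofib(t_1),\ldots,\cofib(t_n)\rangle$ should force a relation $(t_1\cdots t_n)^m\in(s)$, contradicting $s\in\fp$ and $t_i\notin\fp$ prime, does point at the right target; but making coherence actually deliver this is the technical heart of Balmer's proof and is left unaddressed in your sketch.
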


Under much stronger conditions it is known that $\rho_{\cK}^* \colon \Spc(\cK) \to \Spec^h(R_{\cal{K}}^*)$ is a 
homeomorphism \cite[Theorem 1.1 and Lemma 3.10]{DellAmbrogioStanley2016Affine}. 

\begin{theorem}[Dell'Ambrogio--Stanley]\label{thm:ds}
  Suppose that $\cK$ is generated by the tensor unit and that $R_{\cK}^*$ is a graded Noetherian ring concentrated in even degrees such that every homogeneous prime ideal $\mathfrak p\subseteq R_{\cK}^* $ is generated by a (finite) regular sequence of homogeneous elements. Then the comparison map $\rho_{\cK}^*$ is a homeomorphism. Moreover, there is an equality for every $X\in\cK$
  \[
\rho_{\cK}^* (\supp(X)) = \supp_{R_{\cK}^*}(\Hom_{\cK}^*(\unit,X)),
  \]
  where the latter denotes the usual ring theoretic support of a graded $R_{\cK}^*$-module. 
\end{theorem}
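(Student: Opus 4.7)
The plan is to exploit the Noetherian hypothesis together with the explicit description of each homogeneous prime of $R_{\cK}^*$ as the vanishing locus of a regular sequence, reducing the comparison map $\rho_{\cK}^*$ to a concrete computation on Koszul-type objects in $\cK$. The key principle is that since $\cK$ is tensor-generated by $\unit$, every thick tensor-ideal is controlled by $R_{\cK}^*$; the evenness hypothesis sidesteps sign issues and lets us treat $R_{\cK}^*$ essentially as a commutative graded ring.

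First I would observe that surjectivity and continuity of $\rho_{\cK}^*$ are immediate from \Cref{thm:balmer_comparison}, since Noetherian rings are in particular graded coherent. The heart of the argument is the support formula $\rho_{\cK}^*(\supp(X)) = \supp_{R_{\cK}^*}(\Hom_{\cK}^*(\unit,X))$. For a homogeneous element $f\in R_{\cK}^*$, form the Koszul object $\kappa(f) = \cofib(f\colon \Sigma^{|f|}\unit \to \unit)$. A standard tt-geometric computation (inverting $f$ away from $V(f)$) shows $\supp(\kappa(f)) = (\rho_{\cK}^*)^{-1}(V(f))$, while $\Hom_{\cK}^*(\unit,\kappa(f))$ is the two-term Koszul complex on $f$. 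For a prime $\mathfrak p = (f_1,\ldots,f_n)$ generated by a regular sequence, set $\kappa(\mathfrak p) = \kappa(f_1)\otimes\cdots\otimes\kappa(f_n)$; regularity forces the associated iterated Koszul complex to collapse to $R_{\cK}^*/\mathfrak p$ up to shift, giving the matching equality of supports $V(\mathfrak p)$ on both sides.

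To propagate the formula to arbitrary $X\in\cK$, I would use that $\cK$ is tensor-generated by $\unit$ together with Noetherianity: every thick tensor-ideal is determined by which Koszul objects $\kappa(\mathfrak p)$ belong to it, so both assignments $X \mapsto \rho_{\cK}^*(\supp(X))$ and $X \mapsto \supp_{R_{\cK}^*}(\Hom_{\cK}^*(\unit,X))$ are controlled by their values on the $\kappa(\mathfrak p)$. Combining \Cref{thm:tt_classification} on the tt-side with the parallel classification of specialization-closed (equivalently Thomason) subsets of the Noetherian space $\Spec^h(R_{\cK}^*)$ via the $V(\mathfrak p)$, the continuous surjection $\rho_{\cK}^*$ becomes a closed map inducing a bijection on Thomason subsets. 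A standard spectral-space argument then upgrades this to a homeomorphism.

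The main obstacle will be the Koszul collapse step, i.e., showing that $\Hom_{\cK}^*(\unit,\kappa(\mathfrak p)) \cong R_{\cK}^*/\mathfrak p$ up to shift. In an abstract tt-category this is not formal: iterated tensor products of cofibers do not automatically compute the Koszul complex, and one must track the filtration arising from smashing the $\kappa(f_i)$ together. The regular-sequence hypothesis is precisely what forces the resulting spectral sequence to degenerate, so the technical crux is translating the ring-theoretic regularity of $(f_1,\ldots,f_n)$ into the corresponding vanishing of higher Tor-type contributions inside $\cK$.
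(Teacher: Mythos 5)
The paper does not actually prove this theorem: it is imported verbatim from Dell'Ambrogio--Stanley \cite[Theorem 1.1 and Lemma 3.10]{DellAmbrogioStanley2016Affine}, so there is no in-paper argument to compare your proposal against. Judging the proposal on its own terms:

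Your concern about the ``Koszul collapse'' step is overcalibrated. No spectral sequence degeneration is required. Writing $\kappa(f_1,\dots,f_i) := \kappa(f_1)\otimes\cdots\otimes\kappa(f_i)$, the cofiber sequence
\[
\Sigma^{|f_i|}\,\kappa(f_1,\dots,f_{i-1})\xrightarrow{\;f_i\;}\kappa(f_1,\dots,f_{i-1})\longrightarrow\kappa(f_1,\dots,f_i)
\]
yields a long exact sequence on $\Hom^*_{\cK}(\unit,-)$; by induction $\Hom^*_{\cK}(\unit,\kappa(f_1,\dots,f_{i-1}))\cong R^*_{\cK}/(f_1,\dots,f_{i-1})$, and regularity says precisely that $f_i$ acts injectively on that quotient, so the connecting maps vanish and the induction closes. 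Also, your identification $\supp(\kappa(f))=(\rho^*_{\cK})^{-1}(V(f))$ is essentially Balmer's definition of $\rho^*_{\cK}$, so that part is fine.

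The genuine gap is the propagation to arbitrary $X$. You claim both $X\mapsto\rho^*_{\cK}(\supp(X))$ and $X\mapsto\supp_{R^*_{\cK}}(\Hom^*_{\cK}(\unit,X))$ are ``controlled by their values on the $\kappa(\mathfrak p)$'' and then invoke the two classification theorems for Thomason subsets. But the tt-side classification (\Cref{thm:tt_classification}) is phrased in terms of the tt-support, while the ring-theoretic classification is phrased in terms of module-theoretic support, and to transport one to the other you already need to know these two support notions coincide on all of $\cK$ --- the very thing you are trying to prove. What is actually required is to verify that $\sigma(X):=\supp_{R^*_{\cK}}(\Hom^*_{\cK}(\unit,X))$ is itself a tensor-triangular support function, and in particular satisfies the tensor formula $\sigma(X\otimes Y)=\sigma(X)\cap\sigma(Y)$. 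The inclusion $\subseteq$ comes from a Tor spectral sequence for $\Hom^*_{\cK}(\unit,X\otimes Y)$, and the reverse inclusion is nontrivial: one must argue that after localizing at $\mathfrak p$ the spectral sequence does not kill everything, and this is where the regularity hypothesis enters for a second, essential time --- $(R^*_{\cK})_{\mathfrak p}$ is a regular local ring, hence of finite global dimension, and a bounded-Tor-dimension argument makes the top nonvanishing Tor survive. This is the technical heart of the Dell'Ambrogio--Stanley proof, and your sketch does not address it. Until it does, the propagation step is circular rather than a proof.
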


Finally, a tensor triangulated category $\cK$ is called {\em local} if $\Spc(\cK)$ is a local topological space, i.e., every open cover $\Spc(\cK) = \bigcup_{i \in I}U_i$ is trivial, in that there exists $i \in I$ such that $U_i = \Spc(\cK)$. We then have the following \cite[Proposition 4.2]{Balmer2010Spectra}.

\begin{proposition}[Balmer]\label{prop:locality}
  Suppose that $\cK$ is rigid, then the following are equivalent:
  \begin{enumerate}
    \item The tensor triangulated category $\cK$ is local. 
    \item The zero ideal is prime (and thus the unique minimal prime). 
    \item If $X \otimes Y = 0$, then $X = 0$ or $Y = 0$. 
  \end{enumerate}
\end{proposition}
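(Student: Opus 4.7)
My plan is to split the three-way equivalence into two smaller pieces: the direct equivalence (2)$\iff$(3), which is simply an unfolding of the definition of a prime tensor-ideal, and the equivalence (1)$\iff$(2), which combines rigidity with a purely topological characterization of local spaces.

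First, (2)$\iff$(3) amounts to rewriting what it means for the zero ideal $\{0\}\subseteq\cK$ to be prime: this asks exactly that $\unit\neq 0$ (so that $\{0\}\subsetneq\cK$) and that $X\otimes Y=0$ implies $X=0$ or $Y=0$. Rigidity plays no role here, and I am implicitly assuming $\cK$ is non-trivial so that properness does not create an edge case.

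For (1)$\iff$(2) I will first record the general topological observation that a space is local, in the sense of this paper, if and only if it contains a point lying in every non-empty closed subset: given such a point $x$, any open cover has a member containing $x$ and that member is forced to be the whole space, while conversely, if no such point exists, the proper open subsets form a cover with no total member. Applied to $\Spc(\cK)$, together with the Balmer-topology formula $\overline{\{\cQ\}}=\{\cP\in\Spc(\cK)\mid\cP\subseteq\cQ\}$ for closures, such a distinguished point corresponds precisely to a minimum prime ideal, i.e., a prime contained in every other prime. At this stage I invoke rigidity: by \cite[Proposition 2.4]{Balmer2007Supports} every thick tensor-ideal is radical, so in particular $\sqrt{\{0\}}=\{0\}$, and combined with the standard identity $\sqrt{\cI}=\bigcap_{\cP\supseteq\cI}\cP$ (immediate from \Cref{thm:tt_classification} via $\supp(X)=\emptyset\iff X\in\bigcap_\cP\cP$) this yields $\bigcap_\cP\cP=\{0\}$. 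Hence if $\{0\}$ is prime it is visibly the minimum prime, while conversely if a minimum prime $\cP_0$ exists, then $\cP_0\subseteq\bigcap_\cP\cP=\{0\}$ forces $\cP_0=\{0\}$.

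I do not anticipate a serious obstacle here: the only delicate point is the correct translation between the Balmer topology and prime-ideal containments, after which the single input from rigidity collapsing $\sqrt{\{0\}}$ to $\{0\}$ completes the argument.
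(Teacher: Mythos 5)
The paper states this proposition with a citation to \cite[Proposition 4.2]{Balmer2010Spectra} and gives no proof of its own, so there is no in-paper argument to compare against. Your blind proof is correct and is essentially Balmer's: the equivalence $(2)\Leftrightarrow(3)$ is definitional (modulo the implicit assumption $\cK\neq 0$, which you correctly flag), the topological characterization of a local space as one possessing a point lying in every non-empty closed subset is the right reformulation, and the translation via $\overline{\{\cQ\}}=\{\cP\mid\cP\subseteq\cQ\}$ correctly identifies such a point with a minimum prime. The single place rigidity enters---forcing $\sqrt{\{0\}}=\{0\}$, hence $\bigcap_{\cP}\cP=\{0\}$ so that a minimum prime, if it exists, must be the zero ideal---is exactly the crux, and you invoke the right input (\cite[Proposition 2.4]{Balmer2007Supports}). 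One could tighten the parenthetical justification of $\sqrt{\cI}=\bigcap_{\cP\supseteq\cI}\cP$, which is really \cite[Lemma 4.2]{Balmer2005spectrum} rather than a direct corollary of \cref{thm:tt_classification}, but that is a matter of attribution, not correctness.
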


\section{Dualizable \texorpdfstring{$K(n)$}{K(n)}-local spectra and realizability of Morava modules}\label{sec:dualizable}

\subsection{Background on \texorpdfstring{$K(n)$}{K(n)}-local homotopy theory}

We begin by recalling some basic notions of $K(n)$-local homotopy theory for some fixed height $n\ge 1$ and an implicit prime $p$. 
Much more can be found in the memoir of Hovey and Strickland \cite{hoveystrickland_memoir}. We recall first that Morava $K$-theory $K(n)$ is an associative ring spectrum with homotopy $\pi_*K(n) \cong \F_{p^n}[u^{\pm 1}]$ where $|u| = 2$. Closely related is the Lubin--Tate spectrum $E_n$, which is a commutative ring spectrum with homotopy 
\[
\pi_*E_n \cong W\F_{p^n}[\![u_1,\ldots,u_{n-1}]\!][u^{\pm 1}]
\] 
with $|u_i| = 0$. Note that this is a complete regular local Noetherian ring (in the graded sense). For clarity, we will fix a height $n$, and often just write $E = E_n$. Recall that $E_n$ itself is $K(n)$-local. Throughout this document, we let $I_k$ denote the ideal $I_k:=(p,u_1,\ldots,u_{k-1}) \subseteq E_0$ for all $0\leq k\leq n$; for example, $I_0 = (0),I_1=(p)$, $I_n\subseteq E_0$ is the maximal ideal, and we set $I_{n+1}=(1)$ by convention. 

We let $\Sp_{K(n)}$ denote the symmetric monoidal $\infty$-category of $K(n)$-local spectra, where the symmetric monoidal structure is given by $K(n)$-localizing the usual smash product of spectra. We will write $\htimes$ for the $K(n)$-localized smash product, reserving the symbol $\otimes$ for the usual smash product of spectra. The homotopy category of $\Sp_{K(n)}$ is a prominent example of a compactly generated tensor triangulated category which, for $n>0$, is not rigidly compactly generated. Its full subcategory of compact objects is the thick subcategory in spectra generated by the $K(n)$-localization of any finite type $n$ spectrum $F(n)$.

From this, one can deduce that $\Spc(\Sp_{K(n)}^{\omega}) = \{(0)\}$,\footnote{Note that the construction of the Balmer spectrum does not require the monoidal structure to be unital.} i.e., there are no non-trivial thick subcategories of compact $K(n)$-local spectra, see \cite[Proposition 12.1]{hoveystrickland_memoir}. In order to capture more of the image of the chromatic filtration in $\Sp_{K(n)}$, it is therefore necessary to consider the larger category $\Sp_{K(n)}^{\dual}$ of $K(n)$-locally dualizable spectra instead. We focus on the latter in this document and observe that its homotopy category is a local rigid tensor triangulated category:
\begin{lemma}\label{lem:minimal_zero}
The tensor triangulated category $\Sp_{K(n)}^{\dual}$ is local. In particular, the minimal prime ideal is the zero ideal. 
\end{lemma}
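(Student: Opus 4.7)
The plan is to invoke \Cref{prop:locality}: since $\Sp_{K(n)}^{\dual}$ is rigid (it is the full subcategory of dualizable objects in a closed symmetric monoidal category), it suffices to verify that the tensor product $\htimes$ on $\Sp_{K(n)}^{\dual}$ has no zero-divisors, i.e., that $X \htimes Y \simeq 0$ forces $X \simeq 0$ or $Y \simeq 0$. Once this is established, the zero ideal is prime by \Cref{prop:locality}, and since it is contained in every prime tensor-ideal it is automatically the unique minimal prime.

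To check the no-zero-divisors property, I would apply Morava $K$-theory. The key inputs are:

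\textbf{(i)} Smashing a $K(n)$-local smash product with $K(n)$ collapses the localization: for any spectra $A,B$ one has $K(n)\otimes L_{K(n)}(A\otimes B)\simeq K(n)\otimes A\otimes B$, because the fiber of $A\otimes B\to L_{K(n)}(A\otimes B)$ is $K(n)$-acyclic.

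\textbf{(ii)} A Künneth isomorphism $K(n)_*(A\otimes B)\cong K(n)_*(A)\otimes_{K(n)_*}K(n)_*(B)$, valid because $K(n)_*$ is a graded field so every $K(n)_*$-module is free.

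\textbf{(iii)} A $K(n)$-local spectrum $Z$ is zero if and only if $K(n)_*(Z)=0$, which follows from the definition of $K(n)$-localization.

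Combining these, the assumption $X\htimes Y\simeq 0$ yields
\[
0 \;=\; K(n)_*(X\htimes Y) \;\cong\; K(n)_*(X)\otimes_{K(n)_*}K(n)_*(Y).
\]
Since $K(n)_*$ is a graded field, one of the two factors vanishes; say $K(n)_*(X)=0$, hence $X\simeq 0$ in $\Sp_{K(n)}$ by \textbf{(iii)}.

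There is no real obstacle here: all three ingredients are standard consequences of the fact that $K(n)$ is a field spectrum over itself and that $L_{K(n)}$ is smashing after tensoring with $K(n)$. The only minor point to be careful about is notational: although the argument uses $K(n)_*$, the objects $X,Y$ are only assumed to be $K(n)$-locally dualizable and need not themselves be compact in $\Sp_{K(n)}$, but the argument above requires no finiteness assumption on $X$ or $Y$ at all, so the conclusion $X\simeq 0$ or $Y\simeq 0$ is valid in $\Sp_{K(n)}^{\dual}$.
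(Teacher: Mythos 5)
Your proof is correct and essentially identical to the paper's: both verify Condition (3) of \Cref{prop:locality} using the Künneth isomorphism for $K(n)_*$ (a graded field) together with the fact that a $K(n)$-local spectrum vanishes iff it is $K(n)$-acyclic. You merely spell out the intermediate point that $K(n)\otimes L_{K(n)}(-)$ collapses the localization, which the paper leaves implicit.
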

\begin{proof}
  We verify Condition (3) in \Cref{prop:locality}. Suppose that $X,Y \in \Sp_{K(n)}^{\dual}$ and that $X \htimes Y \simeq 0$. It follows that $K(n)_*(X \htimes Y) \cong K(n)_*X \otimes_{K(n)_*} K(n)_*Y \cong 0$, and because $K(n)_*$ is a graded field, we must have $K(n)_*X = 0$ or $K(n)_*Y = 0$. Because $X$ and $Y$ are $K(n)$-local, this implies that either $X \simeq 0$ or $Y \simeq 0$. 
\end{proof}

The completed $E$-homology of a $K(n)$-local spectrum is defined as $E^{\vee}_*X \coloneqq \pi_*L_{K(n)}(E\otimes X)$. By \cite[Proposition 8.4]{hoveystrickland_memoir} $E^{\vee}_*X$ is always $L$-complete in the sense of \cite[Appendix A]{hoveystrickland_memoir} or, equivalently, derived $I_n$-complete in the sense of \cite[Definition 7.3.0.5]{lurie-sag}. The Morava stabilizer $\mathbb{G}_n$ group acts on $E$, and hence on $E^{\vee}_*X$. This latter action is twisted, and the category of Morava modules is the category with objects $L$-complete $E_*$-modules equipped with a twisted continuous action of $\mathbb{G}_n$. Thus, $E^{\vee}_*X$ is always a Morava module. Working with Morava modules is  equivalent to working with $L$-complete comodules over the $L$-complete Hopf algebroid $(E_*,E^{\vee}_*E)$, see, for example, \cite{BarthelHeard2016term}. We also observe that the category of Morava modules is symmetric monoidal; if $M$ and $N$ are Morava modules, then so is $M \widehat \otimes N \coloneqq L_0(M \otimes_{E_*} N)$, with the diagonal $\mathbb{G}_n$-action, where $L_0$ denotes the $L$-completion functor. If $M$ and $N$ are finitely generated $E_*$-modules, then the $L$-completed tensor product can be replaced by the ordinary $I_n$-adically completed tensor product \cite[Proposition A.4]{hoveystrickland_memoir}.

In \cite[Theorem 8.6]{hoveystrickland_memoir}, Hovey and Strickland characterize dualizability of $K(n)$-local spectra in terms of the associated Morava modules; they prove that the  following conditions are equivalent for $X \in \Sp_{K(n)}$:
  \begin{enumerate}
    \item $X$ is $K(n)$-locally dualizable. 
    \item The Morava module $E_*^\vee X$ of $X$ is finitely generated over $E_*$.
    \item $K(n)_*(X)$ is degreewise finite.
  \end{enumerate}

\subsection{A conjecture of Hovey and Strickland}

We now review the construction, due to Hovey and Strickland \cite[Definition 12.14]{hoveystrickland_memoir},  of some thick tensor-ideals in $\Sp_{K(n)}^{\dual}$. 
\begin{definition}\label{defn:ttidealsdk}
  Given $0\leq k \le n$, let $\cal{D}_k$ denote the category of $X \in \Sp_{K(n)}^{\dual}$ such that $X$ is a retract of $Y \otimes Z$ for some $Y \in \Sp_{K(n)}^{\dual}$ and some finite spectrum $Z$ of type at least $k$. We also denote
$\cal{D}_{n+1}:=(0)$ for convenience.
\end{definition}

The objects of $\cal{D}_k$ admit various equivalent characterizations: 

\begin{proposition}[Hovey--Strickland]\label{prop:hs_dk}
The subcategory  $\cal{D}_k\subseteq \Sp_{K(n)}^{\dual}$ is a thick tensor-ideal. 
Moreover, given $X \in \Sp_{K(n)}^{\dual}$ and $0\leq k \leq n+1$, the following are equivalent:
  \begin{enumerate}
    \item $X \in \cal{D}_k$.
    \item The Morava module $E_*^\vee X$ is $I_k$-torsion. 
    \item $X \in \langle L_{K(n)}F(k) \rangle$ for $F(k)$ a finite spectrum of type $k$.
    \item $\Tel(j) \otimes X \simeq 0$ for all $j<k$, where $\Tel(j)$ denotes the telescope of some $v_j$-self map on a finite type $j$ spectrum. 
    \item $K(j)\otimes X \simeq 0$ for all $j<k$.
  \end{enumerate}
In particular, $\cal{D}_k = \langle L_{K(n)}F(k) \rangle$ for any finite type $k$ spectrum $F(k)$.
\end{proposition}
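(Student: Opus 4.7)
The proposition bundles a thickness statement with a five-way equivalence, and my plan is to prove the equivalences in the cycle $(1)\Rightarrow(3)\Rightarrow(2)\Rightarrow(5)\Rightarrow(4)\Rightarrow(1)$ and then read off the tensor-ideal property from the algebraic condition~$(2)$. The implication $(1)\Rightarrow(3)$ is a direct application of Hopkins--Smith: a finite spectrum $Z$ of type at least $k$ lies in the thick subcategory $\langle F(k)\rangle\subseteq\Sp^{\omega}$, so $L_{K(n)}Z\in\langle L_{K(n)}F(k)\rangle$, and smashing with a dualizable $Y$ together with retract-closure keeps us inside this thick tensor-ideal. For $(3)\Rightarrow(2)$, Landweber's filtration theorem gives $E^{\vee}_{*}F(k)$ a finite filtration whose associated graded is a sum of cyclic modules $E_{*}/I_{j}$ with $j\geq k$, each of which is $I_{k}$-torsion; since $I_{k}$-torsion Morava modules are closed under extensions and completed tensor products with finitely generated Morava modules---using that $L$-completion reduces to $I_{n}$-adic completion on finitely generated modules, \cite[Proposition A.4]{hoveystrickland_memoir}---this property propagates to every object of $\langle L_{K(n)}F(k)\rangle$.

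The step $(2)\Rightarrow(5)$ exploits that for $j<k$ the element $v_{j}\in I_{j+1}\subseteq I_{k}$ acts nilpotently on any finitely generated $I_{k}$-torsion Morava module; translating this into the computation of $K(j)_{*}X$, which is $v_{j}$-periodic by construction, forces $K(j)\otimes X\simeq 0$. For $(5)\Leftrightarrow(4)$ on dualizable $K(n)$-local objects, I would argue that both vanishing conditions correspond to the same $I_{k}$-torsion condition on $E^{\vee}_{*}X$: the proof of $(2)\Rightarrow(4)$ runs in parallel with $(2)\Rightarrow(5)$ since $\Tel(j)$ also detects $v_{j}$-nilpotence and hence behaves analogously on $E$-module computations, while $(4)\Rightarrow(5)$ uses that $K(j)\otimes F(j)$ is a $K(j)_{*}$-summand of $\Tel(j)\otimes F(j)$ for a finite type $j$ spectrum $F(j)$.

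The closing implication $(4)\Rightarrow(1)$ is where I expect the main difficulty. The plan is to produce, from the telescope vanishing and the finite generation of $E^{\vee}_{*}X$, a generalized Moore spectrum $M=M(I_{k}^{N})$ of type $k$ whose Morava module $E_{*}/I_{k}^{N}$ annihilates $E^{\vee}_{*}X$; the resulting action of $M$ on $X$ then exhibits $X$ as a retract of $Y\otimes M$, where $Y:=L_{K(n)}(X\otimes DM)$ remains dualizable because its Morava module $E^{\vee}_{*}X\otimes_{E_{*}}E_{*}/I_{k}^{N}$ is finitely generated. The existence of $M(I_{k}^{N})$ with sufficient structure can be delicate at small primes, and one sidesteps this by replacing $M$ with a wedge of generalized Moore spectra of types $\geq k$ supplied by Hopkins--Smith. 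Finally, the thick tensor-ideal property of $\cal{D}_{k}$ and the identification $\cal{D}_{k}=\langle L_{K(n)}F(k)\rangle$ fall out from the equivalence with condition~$(2)$, since $I_{k}$-torsion is manifestly preserved under cofiber sequences, retracts, and completed tensor products with finitely generated Morava modules.
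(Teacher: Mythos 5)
Your overall plan (a single cycle of implications) is reasonable, but several of the crucial steps are either circular within your chosen cycle or skip the main technical point, and one direction is asserted rather than proved.

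\textbf{The step $(5)\Rightarrow(4)$ is not addressed.} You observe that $(4)\Rightarrow(5)$ is easy (since $\langle K(j)\rangle\le\langle\Tel(j)\rangle$ as Bousfield classes), but the reverse direction --- that $K(j)\otimes X\simeq 0$ forces $\Tel(j)\otimes X\simeq 0$ for a $K(n)$-local $X$ --- is the genuinely non-trivial content. This is a restricted form of the telescope conjecture, valid for $E_n$-local spectra, and is exactly \cite[Corollary~6.10]{hoveystrickland_memoir}. Your remark that ``both vanishing conditions correspond to the same $I_k$-torsion condition'' is the statement to be proved, not an argument. As written, your cycle $(1)\Rightarrow(3)\Rightarrow(2)\Rightarrow(5)\Rightarrow(4)\Rightarrow(1)$ does not close.

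\textbf{The step $(2)\Rightarrow(5)$ is not a simple translation.} Nilpotence of $v_j$ on the finitely generated module $E^\vee_*X$ lives in $E_n$-theory; there is no natural map or spectral sequence turning this into a statement about $K(j)_*X$ for $j<n$, since $K(j)$ is not an $E_n$-module for $j<n$. The paper sidesteps this entirely by never arguing directly from $(2)$ to $(5)$: it uses the equivalence of $(1)$ with ``$X$ is a module over a generalized Moore spectrum of type $k$'' (from \cite[Proposition~12.15]{hoveystrickland_memoir}) to get $(1)\Rightarrow(4)$, and then invokes Corollary~6.10 for $(4)\Leftrightarrow(5)$.

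\textbf{The step $(3)\Rightarrow(2)$ has a gap in the tensor-ideal closure.} Your Landweber-filtration argument correctly handles the thick closure (extensions, retracts), but closure under $\widehat\otimes$ with a dualizable $Y$ is not simply a statement about the completed tensor product of Morava modules: $E^\vee_*(Z\widehat\otimes Y)$ is computed by the $K(n)$-local Tor spectral sequence $\Tor^{E_*}_{s,t}(E^\vee_*Z,E^\vee_*Y)\Rightarrow E^\vee_{s+t}(Z\otimes Y)$, and one needs the finite global dimension of $E_*$ to get a horizontal vanishing line before concluding the abutment is $I_k$-torsion. Your phrase ``closed under completed tensor products'' only handles the $\Tor_0$ term.

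\textbf{The step $(4)\Rightarrow(1)$ is circular as sketched.} You propose to build $M(I_k^N)$ whose Morava module annihilates $E^\vee_*X$; but to know that some power of $I_k$ annihilates $E^\vee_*X$, you would need precisely that $E^\vee_*X$ is $I_k$-torsion, i.e.\ condition $(2)$, which you cannot assume when proving $(4)\Rightarrow(1)$ in your cycle. The paper's argument is genuinely different: it proceeds by induction on $j$, first noting $X$ is trivially a module over $M(0)=S^0$, and then uses $\Tel(j)\otimes X\simeq 0$ to force the $v_j$-self map on a given $M(j)$-module structure to act nilpotently on $X$, so the module structure lifts to $M(j)/v_j^m$, a generalized Moore spectrum of type $j+1$. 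This induction is the substance of $(4)\Rightarrow(1)$ and should not be compressed into a single sentence about ``producing'' a type-$k$ Moore spectrum.

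In short: $(1)\Rightarrow(3)$ is fine, $(3)\Rightarrow(2)$ needs the Tor spectral sequence, and the remaining implications require either the module-over-Moore-spectrum characterization from \cite[Proposition~12.15]{hoveystrickland_memoir} or the restricted telescope statement \cite[Corollary~6.10]{hoveystrickland_memoir}, neither of which your sketch invokes or replaces.
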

\begin{proof}
The equivalence of $\mathit{(1)}$ and $\mathit{(2)}$ is due to Hovey--Strickland \cite[Proposition 12.15]{hoveystrickland_memoir}. The implication $\mathit{(3)} \implies \mathit{(2)}$ follows from a thick tensor-ideal argument: There is a conditionally and strongly convergent spectral sequence (\cite[Theorem 5.4]{hovey2004ss})
for every $Z,Y\in\Sp_{K(n)}^{\mathrm{dual}}$
\[
E_{s,t}^2\cong\Tor_{s,t}^{E_*}(E_*^{\vee}Z,E_*^{\vee}Y) \implies E_{s+t}^{\vee}(Z \otimes Y).
\]
Since $E_*$ has global dimension $n$, this spectral sequence has a horizontal vanishing line. If now $E_*^{\vee}Z$ is $I_k$-torsion, then so is the $E_2$-page in each bidegree, which shows that $E_{*}^{\vee}(Z \otimes Y)$ is $I_k$-torsion as well. This property is also closed under retracts and long exact sequences, so we see that the Morava module of any $X \in \langle L_{K(n)}F(k) \rangle$ is $I_k$-torsion, since clearly $E_*^{\vee}(L_{K(n)}F(k))$ is $I_k$-torsion to start with. 

Assume Statement $\mathit{(1)}$. To show that $\mathit{(3)}$ holds it suffices to see that $L_{K(n)}Z \in \thick{L_{K(n)}F(k)}$, where $Z$ is a finite spectrum of type at least $k$, which follows from the thick subcategory theorem of Hopkins and Smith~\cite{HopkinsSmith1998Nilpotence}.

The proof of the remaining equivalences was communicated to us by Neil Strickland. By \cite[Proposition 12.15]{hoveystrickland_memoir}, Statement $\mathit{(1)}$ is equivalent to the condition that $X$ is a module over a generalized Moore spectrum $M(k)$ of type $k$, in the sense of \cite[Definition 4.8]{hoveystrickland_memoir}. We will argue that this latter condition in turn is equivalent to Statement $\mathit{(4)}$. Since $M(k) \otimes \Tel(j) \simeq 0$ for all $j<k$, any module over $M(k)$ satisfies the same vanishing condition, so $\mathit{(1)} \implies \mathit{(4)}$. 

In order to prove the converse, we will show that any $X$ satisfying the conditions of $\mathit{(4)}$ admits a module structure over a generalized Moore spectrum $M(j)$ of type $j$ for all $j \le k$. Suppose first that $X$ is a module over $M(j)$ for some generalized Moore spectrum of type $j<k$. Because $X \otimes \Tel(j) \simeq 0$, the $v_j$-self on $M(j)$ must act nilpotently on $X$, so this module structure extends to a module structure over $M(j)/v_{j}^m$ for some $m>0$, see the proof of \cite[Proposition 12.15]{hoveystrickland_memoir}. Note that $M(j)/v_{j}^m$ is a generalized Moore spectrum of type $j+1$. Since $M(0) = S^0$, the claim is vacuously true for $j=0$, whence induction on $j$ shows that any $X \in \cal{D}_k$ is a module over a generalized Moore spectrum $M(k)$ of type $k$. 

Finally, the equivalence of Statements $\mathit{(4)}$ and $\mathit{(5)}$ follows from \cite[Corollary 6.10]{hoveystrickland_memoir}: indeed, this result implies that Bousfield localization at $\Tel(j)$ and at $K(j)$, respectively, coincide on $E_n$-local spectra, hence also on $K(n)$-local spectra. In other words, $\Tel(j) \otimes W \simeq 0$ if and only if $K(j) \otimes W \simeq 0$ for any $K(n)$-local spectrum $W$ and any $j$.
\end{proof}

The Morava module of $X=L_{K(n)}F(k)\in\Sp_{K(n)}^{\dual}$ is $I_k$-torsion but not $I_{k+1}$-torsion, so \Cref{prop:hs_dk} shows
unconditionally that $\cal{D}_{k}\supsetneq\cal{D}_{k+1}$. We thus have a strictly descending chain of tensor-ideals 
\[ 
\Sp_{K(n)}^{\dual}=\cal{D}_0\supsetneq\cal{D}_{1}\supsetneq\ldots\supsetneq\cal{D}_n=\Sp_{K(n)}^{\omega}\supsetneq\cal{D}_{n+1}=(0).
\]
The equality $\cal{D}_n=\Sp_{K(n)}^{\omega}$ here is an immediate consequence of \cite[Theorem 8.5]{hoveystrickland_memoir}. 
Hovey and Strickland conjecture in \cite[Section 12]{hoveystrickland_memoir} that these are all the thick tensor-ideals of $\Sp_{K(n)}^{\dual}$. 

\begin{conjecture}[Hovey--Strickland]\label{conj:hs}
  If $\cC$ is a thick tensor-ideal of $\Sp_{K(n)}^{\dual}$, then $\cC = \cal{D}_k$ for some $0\leq k \le n+1$. 
\end{conjecture}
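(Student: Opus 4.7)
The strategy is to deduce the conjecture from Chai's Hope in its full $\mathbb{G}_n$-equivariant form, realizing the implication $(2) \Rightarrow (1)$ in Theorem B. Given a thick tensor-ideal $\cC \subseteq \Sp_{K(n)}^{\dual}$, the plan is to associate the radical ideal
\[
J(\cC) := \bigcap_{X \in \cC} \sqrt{\Ann_{E_0}(E_0^{\vee}(X))} \;\subseteq\; E_0.
\]
Because the $\mathbb{G}_n$-action on each Morava module $E_0^{\vee}(X)$ is semilinear over the action on $E_0$, each annihilator is $\mathbb{G}_n$-stable, so $J(\cC)$ is a $\mathbb{G}_n$-invariant radical ideal. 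Granting Chai's Hope, $J(\cC)$ must be one of $I_0, I_1, \ldots, I_n$ or the unit ideal $(1)$.

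Next I match $\cC$ to some $\cal{D}_k$ according to the value of $J(\cC)$. If $J(\cC) = (1)$, each $E_0^{\vee}(X)$ vanishes for $X \in \cC$; since the Morava module detects triviality on $\Sp_{K(n)}^{\dual}$, this forces $\cC = 0 = \cal{D}_{n+1}$. If $J(\cC) = I_k$ with $0 \le k \le n$, then $I_k \subseteq \sqrt{\Ann(E_0^{\vee}(X))}$ for every $X \in \cC$, so $E_0^{\vee}(X)$ is $I_k$-torsion and \Cref{prop:hs_dk} yields $\cC \subseteq \cal{D}_k$. For the reverse inclusion, the equality $J(\cC) = I_k$---as opposed to one of the strictly larger Chai options $I_{k+1}, \ldots, I_n, (1)$---forces some $X_0 \in \cC$ with $\sqrt{\Ann(E_0^{\vee}(X_0))} = I_k$. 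Since $\cal{D}_k = \langle L_{K(n)}F(k) \rangle$ for any finite type-$k$ spectrum $F(k)$, it remains to exhibit $L_{K(n)}F(k) \in \langle X_0 \rangle$.

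This last containment I verify by descending along the base-change functor $\cT := L_{K(n)}(E\otimes -) \colon \Sp_{K(n)}^{\dual} \to \Mod_E^{\omega}$. Since $\cT$ detects tensor-nilpotence---for the same reason $K(n)$ does, using that $K(n) \otimes -$ factors through $E \otimes -$---by \Cref{lem:thick_equiv} the membership $L_{K(n)}F(k) \in \langle X_0 \rangle$ can be checked after applying $\cT$. But $\Mod_E^{\omega}$ is rigid and, by Dell'Ambrogio--Stanley (\Cref{thm:ds}, applicable since $E_*$ is graded Noetherian, regular, and concentrated in even degrees), its Balmer spectrum is $\Spec^h(E_*)$ with support given by ordinary ring-theoretic support of homotopy groups. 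Under this identification,
\[
\supp(\cT(L_{K(n)}F(k))) = V(I_k) = V(\sqrt{\Ann(E_0^{\vee}(X_0))}) = \supp(\cT(X_0)),
\]
so \Cref{lem:support} closes the argument.

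The principal obstacle is Chai's Hope itself, a delicate arithmetic-geometric statement about orbit closures of the $\mathbb{G}_n$-action on Lubin--Tate space; at height $n=2$ the paper accesses it via the Gross--Hopkins period map, while higher heights appear genuinely out of reach. The remaining ingredients---tensor-nilpotence detection by $\cT$, the Dell'Ambrogio--Stanley identification of $\Spc(\Mod_E^{\omega})$, and the translation of Chai's Hope from its original formulation over the closed fiber $\Spf(E_0/p)$ into a Landweber-style statement over $E_0$---should be accessible with the available machinery.
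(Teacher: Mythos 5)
Your proposal follows essentially the same strategy as the paper's \cref{thm:chaiimplieshs}: reduce the containment $L_{K(n)}F(k)\in\langle X_0\rangle$ to a support computation in $\Mod_E^{\omega}$ via tensor-nilpotence detection (\cref{lem:thick_equiv}) and the Dell'Ambrogio--Stanley homeomorphism $\Spc(\Mod_E^{\omega})\cong\Spec(E_0)$, then invoke Chai's Hope to pin down the relevant $\mathbb{G}_n$-invariant closed subset. The one place where your write-up has a real gap is the assertion that $J(\cC)=I_k$ ``forces some $X_0\in\cC$ with $\sqrt{\Ann(E_0^{\vee}(X_0))}=I_k$.'' This does \emph{not} follow from $J(\cC)=I_k$ alone: an intersection of radical ideals, each strictly containing $I_k$, can still equal $I_k$. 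To obtain your witness you must apply Chai's Hope to each individual $\sqrt{\Ann(E_*^{\vee}X)}$ (which, as you observe, is $\mathbb{G}_n$-invariant) to conclude it is one of the finitely many totally ordered ideals $I_j$; only then does the infimum of the $j$'s have to be attained by a single $X_0$. You apply Chai only once, to $J(\cC)$, so this step is unsupported as written. The paper sidesteps this bookkeeping altogether: rather than defining a global ideal $J(\cC)$, it picks the maximal $k$ with $\cC\subseteq\cal{D}_k$, producing a witness $X$ whose Morava module is $I_k$-torsion but not $I_{k+1}$-torsion directly from \cref{prop:hs_dk}, and then applies Chai exactly once to $\supp_E(E\htimes X)$. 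Two further small points: you should track the full graded Morava module $E_*^{\vee}X$ (or $E_0^{\vee}X\oplus E_1^{\vee}X$), not only $E_0^{\vee}X$, both so that $J(\cC)=(1)$ genuinely implies $\cC=0$ and because the Dell'Ambrogio--Stanley support identification is in terms of $\pi_*$; and your tensor-nilpotence claim ``for the same reason $K(n)$ does'' is informal—the clean justification, used by the paper, is Mathew's criterion together with the smash product theorem (\cref{prop:tensor_nilpotence}).
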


We next note that this conjecture is equivalent to describing the Balmer spectrum of $\Sp_{K(n)}^{\dual}$.

\begin{proposition}\label{prop:hs_vs_spc}
The following are equivalent:
  \begin{enumerate}
    \item The ideals $\cal{D}_k$ $(0\leq k\leq n+1)$ exhaust all tensor-ideals of $\Sp_{K(n)}^{\dual}$.
    \item We have $\Spc(\Sp_{K(n)}^{\dual})=\{ \cal{D}_1,\ldots,\cal{D}_{n+1}\}$ with topology determined by the
closure operator $\overline{\{ \cal{D}_k \}}=\{ \cal{D}_i\,\mid i\ge k\}$. 
    \item Localization $L_{K(n)}\colon \Sp_{E}^{\omega} \to \Sp_{K(n)}^{\dual}$ induces a homeomorphism of Balmer spectra
        \[
        \xymatrix{\Spc(L_{K(n)})\colon \Spc(\Sp_{K(n)}^{\dual}) \ar[r]^-{\sim} & \Spc(\Sp_{E}^{\omega}).}
        \]
  \end{enumerate}
\end{proposition}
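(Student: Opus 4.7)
Proof plan.

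The three conditions all describe the lattice of thick tensor-ideals of $\Sp_{K(n)}^{\dual}$ in different ways, so the plan is to translate each into such a statement and invoke Balmer's classification (\Cref{thm:tt_classification}) together with the Hopkins--Smith description of $\Spc(\Sp_E^{\omega})$.

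For the equivalence $(1)\iff(2)$: since $\Sp_{K(n)}^{\dual}$ is rigid, every thick tensor-ideal is automatically radical, so Balmer's theorem provides an inclusion-preserving bijection between thick tensor-ideals and Thomason subsets of $\Spc(\Sp_{K(n)}^{\dual})$. The discussion preceding \Cref{conj:hs} already exhibits the strictly descending chain of $n+2$ distinct thick tensor-ideals $\cal{D}_0\supsetneq\cdots\supsetneq\cal{D}_{n+1}$. The equivalence then rests on the elementary combinatorial fact that a spectral space has exactly $m+1$ Thomason subsets arranged in a chain under inclusion if and only if it is a chain of $m$ points with the Alexandrov topology; this forces $\Spc(\Sp_{K(n)}^{\dual})$ under $(1)$ to be a chain of $n+1$ points, and conversely lets one match the $n+2$ Thomason subsets bijectively with the $\cal{D}_k$. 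The topology described in $(2)$ is then immediate from Balmer's closure formula $\overline{\{\cal{P}\}}=\{\cal{Q}:\cal{Q}\subseteq\cal{P}\}$ applied to this chain.

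For the equivalence $(1)\iff(3)$: I first apply the Hopkins--Smith thick subcategory theorem, combined with the smashing character of $L_E$, to identify $\Spc(\Sp_E^{\omega})$ with the chain $\mathcal{Q}_1\supsetneq\cdots\supsetneq\mathcal{Q}_{n+1}$, where $\mathcal{Q}_k$ is the prime generated by $L_E F(k)$ and the closure order is $\overline{\{\mathcal{Q}_k\}}=\{\mathcal{Q}_i:i\geq k\}$. For $(3)\Rightarrow(1)$, a homeomorphism of Balmer spectra induces a bijection of Thomason subsets, and hence, by Balmer's theorem, of thick tensor-ideals, so $\Sp_{K(n)}^{\dual}$ must carry exactly $n+2$ thick tensor-ideals, which then coincide with the $n+2$ distinct $\cal{D}_k$. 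For $(1)\Rightarrow(3)$, granted the equivalence with $(2)$, it suffices to verify that $\phi:=\Spc(L_{K(n)})$ sends $\cal{D}_k\mapsto\mathcal{Q}_k$: then $\phi$ is an order-preserving bijection between two finite $T_0$-chains and hence a homeomorphism. This identification reduces, using that every object of $\Sp_E^{\omega}$ is a retract of some $L_E F$ for $F$ finite, to showing $L_{K(n)}F\in\cal{D}_k\iff F\in C_k$, which follows from the Morava module criterion in \Cref{prop:hs_dk}(2) together with the fact that for $F$ of type exactly $j_0$, the $E_*$-module $E^{\vee}_*L_{K(n)}F\cong E_*F$ is $I_{j_0}$-power torsion but not $I_{j_0+1}$-torsion.

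The principal technical input is the identification $\phi(\cal{D}_k)=\mathcal{Q}_k$, since this is the only step that goes beyond the formal consequences of Balmer's machinery; the remaining ingredients are elementary combinatorics of finite spectral spaces, the known structure of $\Spc(\Sp_E^{\omega})$, and the Morava module characterization of the ideals $\cal{D}_k$.
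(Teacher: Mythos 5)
Your proposal is correct and follows essentially the same route as the paper: both proofs combine Balmer's classification with the known chain $\cD_0\supsetneq\cdots\supsetneq\cD_{n+1}$, the Hovey--Strickland thick subcategory theorem for $\Sp_E^\omega$, and the identification $\Spc(L_{K(n)})(\cD_k)=\cC_k$ (your $\mathcal Q_k$), verified via the Morava-module criterion of \cref{prop:hs_dk}. The paper organizes this as a cyclic chain $(1)\Rightarrow(2)\Rightarrow(3)\Rightarrow(1)$ rather than two biconditionals, but this is cosmetic.

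One small point worth flagging: the ``elementary combinatorial fact'' that a spectral space with exactly $m+1$ linearly ordered Thomason subsets is a chain of $m$ points in the Alexandrov topology is not obviously true for arbitrary (possibly infinite) spectral spaces, and you invoke it in the direction $(1)\Rightarrow(2)$ before establishing finiteness. The fix is immediate: under hypothesis $(1)$ every prime tensor-ideal must be one of the proper ideals $\cD_1,\ldots,\cD_{n+1}$, so $\Spc(\Sp_{K(n)}^{\dual})$ is a priori a finite $T_0$ space, for which Thomason subsets coincide with specialization-closed subsets and the combinatorial statement is clear. The paper's argument sidesteps this by counting primes among the $\cD_k$ directly and comparing with the number of closed sets, but your version works once the finiteness observation is made explicit.
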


\begin{proof}
To see that $\mathit{(1)}$ implies $\mathit{(2)}$, say $0\leq k\leq n+1$ of the ideals $\cal{D}_1,\ldots,\cal{D}_{n+1}$ are prime.
Given the known inclusions among them, this implies that $\Spc(\Sp_{K(n)}^{\dual})$ has at most $k+1$ closed subsets,
hence $\Sp_{K(n)}^{\dual}$ has at most $k+1$ thick tensor-ideals. This implies $k+1\ge n+2$, hence $k=n+1$, as claimed.

In order to prove the remaining equivalences, we first observe that the localization functor $L_{K(n)}\colon \Sp_E \to \Sp_{K(n)}$ is symmetric monoidal. It therefore induces a symmetric monoidal functor $\phi = L_{K(n)}\colon \Sp_E^{\omega} = \Sp_E^{\dual} \to \Sp_{K(n)}^{\dual}$ by passing to the full subcategories of dualizable objects, and hence a continuous map
\[
\xymatrix{\Spc(\phi)\colon \Spc(\Sp_{K(n)}^{\dual}) \ar[r] & \Spc(\Sp_{E}^{\omega}).}
\]
Let $\cC_k = \langle L_EF(k) \rangle \subseteq \Sp_{E}^{\omega}$ be the thick tensor-ideal of $\Sp_{E}^{\omega}$ generated by the $E$-localization of a finite type $k$ spectrum $F(k)$. The thick subcategory theorem for the $E$-local category~\cite[Theorem 6.9]{hoveystrickland_memoir} shows that the $\cC_k$s are precisely the thick tensor-ideals of $\Sp_{E}^{\omega}$. Together with an argument similar to the one for the implication $(1) \implies (2)$ above, this implies that 
\[
\Spc(\Sp_{E}^{\omega}) = \{\cC_1,\ldots,\cC_{n+1}\} 
\]
with topology determined by the closure operator $\overline{\{ \cC_k \}}=\{ \cC_i\,\mid i\ge k\}$ for all $k \le n+1$. It follows from \cref{prop:hs_dk} that $\phi(\cC_k) \subseteq \cD_k$ and $\phi(\cC_{k-1}) \nsubseteq \cD_{k}$ for all $k$. Since $\phi^{-1}(\cD_k)$ is a thick tensor-ideal of $\Sp_{E}^{\omega}$, we see that $\phi^{-1}(\cD_k) = \cC_k$ for all $k$. 

Assume Statement $\mathit{(2)}$, which says that the thick tensor-ideals $\cD_k$ are prime and that the map $\Spc(\phi)$ is a surjective map between sets of the same finite cardinality, hence a bijection. By inspection of the topologies and using continuity of $\Spc(\phi)$, it must be a homeomorphism, so Statement $\mathit{(3)}$ follows. 

Finally, assume Statement $\mathit{(3)}$, then the number of prime tensor-ideals of $\Sp_{K(n)}^{\dual}$ equals that of $\Sp_{E}^{\omega}$. Therefore, $\Sp_{K(n)}^{\dual}$ has precisely $n+2$ thick tensor-ideals, given the classification of thick tensor-ideals in terms of $\Spc$. This implies that the $\cD_k$ exhaust all tensor ideals of $\Sp_{K(n)}^{\dual}$, which is Statement $\mathit{(1)}$.
\end{proof}

\begin{remark}
In the above situation, we can see unconditionally that the continuous map 
\[
\xymatrix{\Spc(L_{K(n)})\colon \Spc(\Sp_{K(n)}^{\dual}) \ar[r] & \Spc(\Sp_{E}^{\omega})}
\]
is surjective: We compute the pre-image of the stratification by the supports of the ideals $\cC_i$ to be 
\[ 
\Spc(L_{K(n)})^{-1}(\supp(\cC_i=\langle L_EF(i)\rangle)) = \supp(\langle L_{K(n)}F(i)\rangle ) = \supp (\cD_i),
\]
which we know to be a proper stratification. It seems surprising that we cannot unconditionally prove that the ideals $\cD_i$ are indeed prime.
\end{remark}

\subsection{Realizing Morava modules at large primes}

Recall that we are working $p$-locally at a chromatic height $n\ge 1$.
Assume throughout this subsection that $2p-2 >n^2+n$. Our goal is to prove that all finitely generated Morava modules can be realized by $K(n)$-locally dualizable spectra. The special case of invertible comodules is the main result of \cite{pstragowski_pic}. We recall that we write $E = E_n$ and $K = K(n)$ for brevity.

\begin{theorem}\label{thm:realization}
If $2p-2 > n^2 + n$ and $E$ is Morava $E$-theory, any finitely generated Morava module can be realized as the completed $E$-homology of a $K$-locally dualizable spectrum.
\end{theorem}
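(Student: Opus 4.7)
The plan is to follow the inductive obstruction-theoretic strategy of Pstr\k{a}gowski \cite{pstragowski_pic}, generalizing his treatment of invertible Morava modules to arbitrary finitely generated ones. The numerical hypothesis $2p-2>n^2+n$ enters because it ensures that the Morava stabilizer group $\mathbb{G}_n$ is $p$-torsion-free of bounded continuous cohomological dimension, a bound which is small enough relative to the first possible Adams differential. As a consequence, the $E$-based Adams spectral sequence in $\Sp_{K(n)}$ acquires a strong horizontal vanishing line, so that purely algebraic data between Morava modules can be lifted to the $K(n)$-local category with uniformly vanishing obstructions.

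The key constructive step is as follows. Given a finitely generated Morava module $M$, I would first choose a finite free presentation $P_\bullet \to M \to 0$ in the abelian category of $L$-complete Morava modules; under the hypothesis this category has finite global dimension, so such a resolution exists with only finitely many nonzero terms. Each finitely generated free Morava module is realized as $E^{\vee}_*$ of a $K(n)$-locally dualizable spectrum, built as a finite wedge of (shifts of) the $K(n)$-local sphere. The boundary maps $P_{i+1}\to P_i$ then need to be lifted through $E^{\vee}_*$ to maps of $K(n)$-local spectra, with the obstructions lying in continuous $\Ext$-groups between Morava modules; the bound $2p-2>n^2+n$ ensures that these obstructions vanish in the relevant range.

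Inductively taking cofibers of the lifted boundary maps produces a $K(n)$-local spectrum $X$ with $E^{\vee}_* X \cong M$ by construction. Since $M$ is finitely generated over $E_*$, the Hovey--Strickland dualizability criterion recalled just after \Cref{lem:minimal_zero} then implies $X\in\Sp_{K(n)}^{\dual}$, completing the realization.

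The principal obstacle is the obstruction analysis in the \emph{general} finitely generated case. In \cite{pstragowski_pic} the invertibility of $M$ severely restricts the algebraic structure and makes all relevant obstruction groups small and amenable to direct inspection. Here one must instead rely systematically on the finite continuous cohomological dimension of $\mathbb{G}_n$ at large primes in order to force uniform vanishing of all potentially obstructing higher $\Ext$ classes; this is precisely where the bound $2p-2>n^2+n$ becomes essential and, from the algebraicity perspective of \cite{pstragowski_franke}, essentially optimal.
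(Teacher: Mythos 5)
The proposal does not follow the paper's argument and, more importantly, contains a genuine gap at its base case.

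Your plan is to take a finite free resolution $P_\bullet \to M \to 0$ in the category of Morava modules, realize each $P_i$ by a finite wedge of shifts of $L_{K(n)}S^0$, and lift the boundary maps via obstruction theory. The first issue is that a ``free Morava module'' is not determined by the underlying $E_*$-module being free: a Morava module is an $L$-complete $E_*$-module \emph{together with} a twisted continuous $\mathbb{G}_n$-action, and many inequivalent $\mathbb{G}_n$-actions live on a single free $E_*$-module. The $K(n)$-local sphere realizes only one such structure (namely $E_*$ with its canonical action), so wedges of spheres do \emph{not} realize all rank-one Morava modules --- indeed, classifying exactly which rank-one (invertible) Morava modules are realizable is precisely the content of Pstr\k{a}gowski's Picard group paper \cite{pstragowski_pic}, and it requires his full machinery. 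Your inductive base case thus already presupposes the hardest special case of the theorem being proved. Relatedly, the abelian category of $L$-complete Morava modules does not have an evident supply of projective objects of this form, so the existence of a finite resolution by ``free Morava modules'' in the sense you need is itself unclear. The obstruction-theoretic lifting step is also only sketched as a hope; you do not identify the relevant obstruction groups nor verify that the bound $2p-2>n^2+n$ forces them to vanish in the entire range required.

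The paper avoids all of this by taking a quite different route: it passes to the $I_n$-power-torsion quotients $M\otimes E_*/I_n^j$, observes these are honest $E_*E$-comodules, and invokes Pstr\k{a}gowski's realization functor $\beta\colon\Comod_{E_*E}\to h\Sp_E$ (from \cite{pstragowski_franke}, valid when $2p-2>n^2+n$) as a black box to realize each one. Setting $X_j(M)=L_K\beta(M\otimes E_*/I_n^j)$ and $X(M)=\holim_j X_j(M)$, the remaining work is a finiteness argument (via \cref{lem:extfg} and a universal coefficient spectral sequence) showing $X(M)$ is dualizable, together with a Milnor sequence computation identifying $E_*^\vee X(M)\cong M$. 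In particular, all of the obstruction theory you are trying to redo is already packaged inside the cited realization functor $\beta$; the paper needs no direct obstruction analysis.
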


\begin{remark}
Without the condition on the prime $p$, the conclusion of this result is unlikely to hold in general. For example, if $p=2$ and $n=2$, we suspect that the Morava module $(E_2)_*/(2,u_1)$ cannot be realized as the completed $E$-homology of a $K(2)$-local spectrum. 
\end{remark}

We will need the following purely algebraic finiteness result.

\begin{lemma}\label{lem:extfg}
Let $R = (R,\fm,k)$ be a regular, local, Noetherian commutative ring with maximal ideal $\fm$ and residue field $k = R/\fm$. Then, for every finitely generated $R$-module $N$ and each $i\ge 0$, the $R$-module 
\[
\colim_j\Ext_R^i(N \otimes_{R} R/\fm^j,k) \cong \Ext_R^{i}(N,k)
\]
is finitely generated. Here, the symbol $\otimes_R$ denotes the underived tensor product of $R$-modules.
\end{lemma}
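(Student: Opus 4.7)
My plan is to handle the finiteness assertion and the colimit isomorphism separately.

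For finiteness, I would invoke only that $R$ is Noetherian and $N$ is finitely generated. This ensures $N$ admits a resolution $P_\bullet \to N$ with each $P_i$ a finitely generated free $R$-module. Applying $\Hom_R(-,k)$ termwise then produces a cochain complex whose terms $\Hom_R(P_i,k) \cong k^{\mathrm{rk}\,P_i}$ are finite-dimensional $k$-vector spaces. Consequently, each cohomology group $\Ext^i_R(N,k)$ is a finite-dimensional $k$-vector space, and in particular a finitely generated $R$-module. Regularity plays no role here.

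For the isomorphism, the key idea is to choose an injective resolution $I^\bullet$ of $k$ whose terms are all $\fm$-torsion. Such a resolution exists: in the minimal injective resolution of $k = R/\fm$ the standard formula for Bass numbers $\mu^s(\fp,k) = \dim_{k(\fp)}\Ext^s_{R_\fp}(k(\fp),k_\fp)$ shows that only the prime $\fp = \fm$ contributes, so each $I^s$ is a direct sum of copies of the injective hull $E(R/\fm)$. The latter is Artinian over the Noetherian local ring $R$, hence $\fm$-torsion.

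The central technical observation is then: for every $\fm$-torsion $R$-module $I$ and every finitely generated $R$-module $M$, the natural map
\[
\colim_j \Hom_R(M/\fm^j M, I) \longrightarrow \Hom_R(M, I)
\]
is an isomorphism. Indeed, any $R$-linear $f\colon M \to I$ has finitely generated image inside the $\fm$-torsion module $I$, so $\fm^j f(M) = 0$ for some $j$, and hence $f$ factors through $M/\fm^j M$. Applying this termwise to $I^\bullet$ and using that filtered colimits in $R$-modules are exact, and so commute with cohomology, I obtain
\[
\Ext^i_R(N,k) = H^i\bigl(\Hom_R(N,I^\bullet)\bigr) = \colim_j H^i\bigl(\Hom_R(N/\fm^j N, I^\bullet)\bigr) = \colim_j \Ext^i_R(N/\fm^j N, k).
\]
Combined with the identification $N \otimes_R R/\fm^j = N/\fm^j N$, this yields the claimed isomorphism.

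The only non-formal input is the existence of an $\fm$-torsion injective resolution of $k$; this is where the structure theory of injective modules over a Noetherian local ring is essential, and I expect it to be the main (if mild) obstacle. Everything else is a routine manipulation with adjunctions and filtered colimits.
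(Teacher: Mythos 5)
Your proof is correct and takes a genuinely different, more elementary route than the paper's. The paper works in the derived category $\cD_R$: it introduces the functor $F(C)=\colim_j\RHom_R(C\otimes^L_R R/\fm^j,k)$, uses local duality to compute $F(R)\simeq k$ and regularity of $R$ (so that $N$ is a perfect complex) to deduce that $H^*F(N)$ is degreewise finitely generated, and then degenerates a composite-functor spectral sequence via an Artin--Rees argument showing $\colim_j\Tor^R_t(N,R/\fm^j)=0$ for $t>0$, thereby identifying $H^*F(N)$ with the colimit of underived $\Ext$-groups. You instead argue entirely in the abelian category: finiteness of $\Ext^i_R(N,k)$ follows from resolving $N$ by finite-rank free modules, and the colimit isomorphism follows from choosing an injective resolution of $k$ by $\fm$-torsion modules (direct sums of $E(R/\fm)$, which is Artinian and hence $\fm$-power torsion over a Noetherian local ring) together with the observation that any $R$-linear map from a finitely generated module into an $\fm$-torsion module factors through a finite quotient. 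Notably, your argument never invokes regularity, so it in fact proves the statement over any Noetherian local ring, and it sidesteps the derived/underived tensor discrepancy that the paper handles via Artin--Rees, since no tensor product ever appears on your side of the ledger. The paper's derived-category framing has the advantage of meshing with the categorical machinery used elsewhere in the paper, but for this particular lemma your route is both shorter and more general.
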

\begin{proof}
The finite generation of the generalized local cohomology $\colim_j\Ext_R^i(N \otimes_{R} R/\fm^j,k)$ also follows from the implication $(c) \implies (a)$ of Theorem 2.9 in \cite{kya_localcohom}; we include an independent argument here. Consider the derived functor on the derived category of $R$-modules
\[
F\colon \cD_R \to \cD_R, \quad C \mapsto \colim_j\mathrm{RHom}_R(C \otimes_{R}^L R/\fm^j,k),
\]
and let $\cC\subseteq\cD_R$ be the full subcategory  of objects $C$ for which $F(C)$ is compact. On the one hand, since 
\[
F(R) \simeq k
\]
by local duality (see \cite[Lemma 2.3]{pstragowski_pic}), we see that $R \in \cC$, hence $\cD_R^{\omega} \subseteq \cC$ because $\cC$ is thick.  On the other hand, $R$ is regular local and thus has finite global dimension, hence every finitely generated $R$-module $N$ is in $\cD_R^{\omega}$. Therefore, we conclude that $F(N) \in \cD_R^{\omega}$, so $H^*F(N)$ is in particular degreewise finitely generated.

There is a composite functor spectral sequence
\[
E_2^{s,t}(N) \cong \colim_j\Ext_R^s(\Tor_t^R(N,R/\fm^j),k) \implies H^*F(N)
\]
which strongly converges because $R$ has finite global dimension. We claim that the $E_2$-term  is concentrated on the $(t=0)$-line, so the spectral sequence collapses. To this end, let $t>0$. We observe that the tower $(\Tor_t^R(N,R/\fm^j))_j$ is nilpotent as a consequence of the Artin--Rees lemma, as follows: Take a partial  free resolution 
\[
0 \to M \xrightarrow{\iota} F=F_{t-1} \to F_{t-2} \to \ldots \to F_0 \to N \to 0
\]
of $N$ with $F_i$ finite free for all $i$. Since $t>0$, dimensional reduction then provides a natural isomorphism
$\Tor_t^R(N,-)\cong\ker(\iota\otimes_R -)$. In particular, given ideals $J\subseteq K\subseteq R$, the canonical map
$\Tor_t^R(N,R/J)\to\Tor_t^R(N,R/K)$ identifies with the
the obvious map $(M\cap JF)/JM\to (M\cap KF)/KM$, and hence is zero if $M\cap JF\subseteq KM$. Now, by the Artin--Rees lemma, there is an $i_0\ge 0$
such that for all $i\ge i_0$ we have 
\[ 
\fm^iF\cap M\subseteq \fm^{i-i_0}(\fm^{i_0}F\cap M)\left( \subseteq \fm^{i-i_0}M\right),
\]
which by the above means that $\Tor_t^R(N,R/\fm^i)\to\Tor_t^R(N,R/\fm^{i-i_0})$ is zero. Hence the tower $(\Tor_t^R(N,R/\fm^j))_j$ is nilpotent, as claimed.
This implies that, for every  $s$, the inductive system
\[
(\Ext_R^s(\Tor_t^R(N,R/\fm^j),k))_j
\]
is nilpotent, too, hence
\[
E_2^{s,t}(N) \cong \colim_j\Ext_R^s(\Tor_t^R(N,R/\fm^j),k) = 0
\]
for all $s \ge 0$ and all $t\ge 1$.  
Consequently, the spectral sequence degenerates into an isomorphism, for every $s\ge 0$:
\[ 
E_2^{s,0}\cong  \colim_j\Ext_R^s(N\otimes_R R/\fm^j,k)\cong
H^sF(N).
\]
Since we already observed that $H^sF(N)$ is finitely generated,
this finishes the proof that the groups $\colim_j\Ext_R^i(N \otimes_{R} R/\fm^j,k)$ are finitely generated.

Finally, note that we have natural isomorphisms 
\[
F(N) = \colim_j\mathrm{RHom}_R(N \otimes_{R}^L R/\fm^j,k) \simeq \mathrm{RHom}_R(N, \colim_j\mathrm{RHom}_R(R/\fm^j,k)) \simeq \mathrm{RHom}_R(N, k),
\]
using that $N$ is perfect and local duality again. By the collapse of the above spectral sequence, we thus obtain an isomorphism $\colim_j\Ext_R^i(N\otimes_R R/\fm^j,k) \cong \Ext_R^{i}(N,k)$, as desired.
\end{proof}

The remainder of the argument proceeds along the same lines as the proof of \cite[Theorem 2.5]{pstragowski_pic}. Let $M$ be a Morava module, finitely generated as a graded $E_*$-module.

\begin{lemma}
For each $j \ge 0$, there is a canonical isomorphism
\[
E^{\vee}_*E \cotimes (M \otimes E_*/I_n^j) \cong E_*E \otimes (M \otimes E_*/I_n^j).
\]
In particular, $(M \otimes E_*/I_n^j)$ is a comodule over the Hopf algebroid $(E_*,E_*E)$.
\end{lemma}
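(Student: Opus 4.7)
The plan is to reduce both sides of the claimed isomorphism to a common expression, exploiting that $N := M \otimes_{E_*} E_*/I_n^j$ is finitely generated over $E_*$ and annihilated by $I_n^j$. First I would observe that $E_*^\vee E \otimes_{E_*} N$ is already $L$-complete: it inherits the property of being $I_n^j$-torsion from $N$, so its $I_n$-adic tower is eventually constant from stage $j$ onwards, which forces $L_0 P = P$ and $L_s P = 0$ for $s > 0$ whenever $I_n^j P = 0$. This identifies $E_*^\vee E \cotimes N$ with the ordinary tensor product $E_*^\vee E \otimes_{E_*} N$.

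Next, since $N$ is an $E_*/I_n^j$-module, tensoring with $N$ factors through reduction modulo $I_n^j$, yielding
\[
E_*^\vee E \otimes_{E_*} N \cong (E_*^\vee E / I_n^j E_*^\vee E) \otimes_{E_*/I_n^j} N
\]
and an analogous identity with $E_*E$ in place of $E_*^\vee E$. The canonical map $E_*E \to E_*^\vee E = L_0(E_*E)$ becomes an isomorphism after reducing modulo $I_n^j$, a basic property of $L$-completion proved in \cite[Appendix A]{hoveystrickland_memoir}. Combining these identifications yields the asserted isomorphism.

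For the ``in particular'' statement, I would start from the Morava module coaction $\psi\colon M \to E_*^\vee E \cotimes M$ and apply $-\otimes_{E_*} E_*/I_n^j$. Using the $L$-completion identity $L_0(Y)/I_n^j L_0(Y) \cong Y/I_n^j Y$ together with associativity of the tensor product and the first part of the lemma, the target of the resulting map identifies with $E_*E \otimes_{E_*} N$. This defines the desired coaction, and coassociativity and counitality are inherited by naturality from those of $\psi$. The only subtlety throughout is the bookkeeping with $L$-completion, but every ingredient reduces to standard facts from \cite[Appendix A]{hoveystrickland_memoir}; there is no substantive obstacle.
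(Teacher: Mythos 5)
Your proposal is correct and is essentially an unwinding of the paper's one-line proof, which simply observes that $E_*/I_n^j$ is $I_n$-power torsion and cites a remark in the Barthel--Heard reference where exactly this reduction (ordinary versus $L$-completed tensor product agree on bounded $I_n$-torsion modules, and $E_*E$ and $E_*^\vee E$ agree modulo $I_n^j$) is carried out. Your more explicit bookkeeping — identifying $\cotimes$ with $L_0(\otimes)$, noting that $I_n^j$-torsion modules are already $L$-complete, factoring through reduction modulo $I_n^j$, and using $L_0(E_*E)/I_n^j \cong E_*E/I_n^j$ — is precisely the content being invoked, so the two arguments coincide in substance.
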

\begin{proof}
This follows because $E_*/I_n^j$ is $I_n$-power torsion (cf.~\cite[Remark 1.4]{BarthelHeard2016term}). 
\end{proof}

The next result follows from the case $k = 1$ of Theorem 2.15 in \cite{pstragowski_franke} (see also Definition 2.16 and Remark 2.17), following earlier work by Bousfield.

\begin{lemma}[Pstr\k{a}gowski]
For $2p-2 > n^2+n$ there exists a functor $\beta\colon \Comod_{E_*E} \to h\Sp_E$ such that $E_*\beta(X) \cong X$ as $E_*E$-comodules for any $X \in \Comod_{E_*E}$.
\end{lemma}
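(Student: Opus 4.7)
The plan is to apply the $k=1$ case of \cite[Theorem 2.15]{pstragowski_franke} to the Adams-type homology theory $E = E_n$. That theorem constructs a realization functor from the abelian category of comodules to the homotopy category of $F$-local spectra for any Adams-type $\mathbb{E}_\infty$-ring $F$ whose associated Hopf algebroid satisfies a ``sparsity versus cohomological dimension'' gap, formalized in Definition 2.16 of \emph{loc.\ cit.} So the task reduces to verifying this numerical hypothesis for $F = E$.

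First, I would bound the cohomological dimension of the Hopf algebroid $(E_*, E_*E)$. The hypothesis $2p - 2 > n^2 + n$ forces $n < p - 1$, so in particular $(p-1) \nmid n$, which implies that the Morava stabilizer group $\mathbb{G}_n$ is torsion-free. By Lazard's theorem the cohomological dimension of $\mathbb{G}_n$ is then equal to $n^2 + n$, and this controls the cohomological dimension of $\Comod_{E_*E}$ up to a fixed constant independent of $n$ and $p$.

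Second, I would check the sparsity side. Since $E_*$ is concentrated in even degrees and has a periodicity unit of degree $-2$, the comodule category $\Comod_{E_*E}$ is $(2p-2)$-sparse in Pstr\k{a}gowski's sense once we incorporate the action of the periodicity generator (this is where the prime enters: the Ext-bigraded groups for the relevant Adams-type spectral sequence exhibit $(2p-2)$-sparsity). The strict inequality $2p - 2 > n^2 + n$ is then exactly the gap separating the sparsity from the cohomological dimension demanded by \cite[Theorem 2.15]{pstragowski_franke}.

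With the hypotheses verified, Theorem 2.15 with $k = 1$ directly outputs the functor $\beta \colon \Comod_{E_*E} \to h\Sp_E$ together with the natural isomorphism $E_*\beta(X) \cong X$. The genuinely hard step is carried out inside \cite{pstragowski_franke}: $\beta$ is built stagewise on an Adams resolution, and the successive obstructions to realization and to naturality live in $\Ext$-groups of bidegree forbidden by the sparsity gap, hence vanish. We therefore have nothing to do beyond matching the numerical input.
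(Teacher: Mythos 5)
Your proposal follows the same route as the paper, which simply cites the case $k=1$ of Theorem~2.15 in \cite{pstragowski_franke}; the paper does not reprove or even re-verify the numerical hypotheses, so you are doing strictly more than the paper asks. Two small imprecisions in your sketch of why the hypotheses hold: Lazard's theorem gives $\mathrm{cd}(\mathbb{G}_n)=n^2$ (the dimension of $\mathbb{S}_n$ as a $p$-adic Lie group) once $\mathbb{G}_n$ is $p$-torsion-free, not $n^2+n$; the extra $+n$ enters from the Krull (equivalently, global) dimension of $E_*$. Likewise the ``$(2p-2)$-sparsity'' is not seen directly on $E_*$ (which is $2$-periodic) but on the relevant refined algebraic filtration. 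Neither issue affects the logic, since the paper's proof is just a citation and you correctly identify the theorem and the value $k=1$.
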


Combining the two previous lemmas, we obtain for all $j\ge 0$ an $E_*E$-comodule isomorphism $E_*\beta(M \otimes E_*/I_n^j) \cong M \otimes E_*/I_n^j$, which in turn is isomorphic to $E_*^{\vee}\beta(M \otimes E_*/I_n^j)$ because it is an $I_n$-power torsion module. 

\begin{lemma}\label{lem:fg}
For each $j\ge 0$, the graded $E_*$-module $\colim_jK^*\beta(M \otimes E_*/I_n^j)$ is finitely generated.
\end{lemma}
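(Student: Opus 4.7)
The plan is to compute $K^*\beta(M \otimes E_*/I_n^j)$ by a universal-coefficient-type spectral sequence expressing it algebraically in terms of $E_*$-Ext, and then to interchange the colimit in $j$ with this spectral sequence so that \Cref{lem:extfg} supplies the required finiteness. Specifically, since the Morava $K$-theory spectrum $K$ admits the structure of a module spectrum over $E$ in the $K(n)$-local world, one obtains a universal coefficient spectral sequence of the form
\[
E_2^{s,t} = \Ext_{E_*}^{s,t}\bigl(E_*\beta(M \otimes E_*/I_n^j),\, K_*\bigr) \Longrightarrow K^{s+t}\beta(M \otimes E_*/I_n^j).
\]
The defining property of $\beta$ identifies $E_*\beta(M \otimes E_*/I_n^j) \cong M \otimes E_*/I_n^j$, and since $E_*$ is graded regular of global dimension $n$, the spectral sequence has a horizontal vanishing line at $s = n+1$ and converges strongly via a finite filtration.

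Passing to the filtered colimit in $j$ is exact and hence commutes with the formation of the spectral sequence. On the $E_2$-page, \Cref{lem:extfg} applied to the regular local Noetherian ring $R = E_0$ with maximal ideal $\fm = I_n$ and residue field $\F_{p^n}$ (extended to the 2-periodic graded situation by decomposing $M$ into its even and odd parts) identifies
\[
\colim_j \Ext_{E_*}^{s,t}\bigl(M \otimes E_*/I_n^j,\, K_*\bigr) \cong \Ext_{E_*}^{s,t}(M, K_*),
\]
which is finitely generated as a $K_*$-module because $M$ is finitely generated over $E_*$. The abutment $\colim_j K^{*}\beta(M \otimes E_*/I_n^j)$ is therefore a finite extension of finitely generated $K_*$-modules, and hence itself finitely generated over $K_*$, and a fortiori over $E_*$.

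The step I expect to require the most care is setting up the universal coefficient spectral sequence cleanly: $K$ is not literally $E/I_n$ on the nose, but acquires the structure of an $E$-module spectrum after appropriate $K(n)$-localization, and the spectral sequence should be available via the standard $E$-based Adams formalism of Hovey--Strickland. Everything else in the argument is a formal consequence once this spectral sequence is in hand.
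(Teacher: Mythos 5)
Your proposal matches the paper's proof essentially verbatim: the same universal coefficient spectral sequence (the paper cites Hovey, Theorem~4.1 of the 2004 spectral sequences paper), the same horizontal vanishing line from the global dimension of $E_*$, the same passage to filtered colimits, and the same application of \cref{lem:extfg} to conclude finite generation. The only cosmetic difference is that you write $E_*\beta(-)$ where the paper writes $E_*^{\vee}\beta(-)$, but these agree for $I_n$-torsion modules as the paper notes just before the lemma.
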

\begin{proof}
There is a universal coefficient spectral sequence \cite[Theorem 4.1]{hovey2004ss}
\[
\Ext_{E_*}^*(E_*^{\vee}\beta(M \otimes E_*/I_n^j), K_*) \implies K^*\beta(M \otimes E_*/I_n^j)
\]
which converges with a horizontal vanishing line of intersect $n$ on the $E_2$-page since $E_0$ is regular of dimension $n$. Since filtered colimits are exact, we obtain a convergent spectral sequence
\[
\colim_j\Ext_{E_*}^*(E_*^{\vee}\beta(M \otimes E_*/I_n^j), K_*) \implies \colim_jK^*\beta(M \otimes E_*/I_n^j),
\]
which also has a horizontal vanishing line of intersect $n$ on the $E_2$-page. \Cref{lem:extfg} shows that the terms on the $E_2$-page of this spectral sequence are finitely generated $E_*$-modules, which implies the claim because $E_*$ is Noetherian.  
\end{proof}

\begin{proof}[Proof of \cref{thm:realization}]
Let $M$ be a finitely generated Morava module. We define $X_j(M) = L_K\beta(M \otimes E_*/I_n^j)$, and $X(M)$ as the homotopy limit $\holim_jX_j(M)$, computed by lifting the tower to the underlying model, taking the homotopy limit there, and then passing to $h\Sp_E$. We have already established the following isomorphism
\[
E_*^{\vee}X_j(M) \cong M \otimes E_*/I_n^j.
\]
In particular, as this is a finitely generated $E_*$-module, this shows that $X_j(M) \in \Sp_{K(n)}^{\dual}$ is $K$-locally dualizable for all $j \ge 0$. As a next step, we show that the spectrum $X(M)$ is $K$-locally dualizable.

To this end, note first that $X(M) \simeq D_K\colim_jD_KX_j(M)$, so it suffices to prove that $\colim_jD_KX_j(M)$ is $K$-locally dualizable, where $D_K$ denotes $K$-local Spanier--Whitehead duality and the colimits are computed in the $K$-local category. By \cite[Theorem 8.6]{hoveystrickland_memoir}, this is the case if and only if $K_*\colim_jD_KX_j(M)$ is degreewise finite. We compute:
\[
K_*\colim_jD_KX_j(M) \cong \colim_jK_*D_KX_j(M) \cong \colim_jK^*X_j(M). 
\]
\Cref{lem:fg} shows that this graded $E_*$-module is finitely generated over $E_*$. Moreover, it is $I_n$-torsion, hence finitely generated over $K_*$, and thus degreewise finite, as desired.

It remains to establish an isomorphism $E_*^{\vee}X(M) \cong M$ of Morava modules. Dualizability of $X(M)$ and $X_j(M)$ for all $j\ge 0$ allows us to apply \cite[Lemma 2.4]{pstragowski_pic} to get a $K$-local equivalence $L_K(E \otimes X(M)) \simeq \holim_jL_{K}(E \otimes X_j(M))$. The associated Milnor sequence then provides a short exact sequence of $E_*$-modules
\[
0 \to \lim_j^1(M \otimes E_{*+1}/I_n^j) \to E_*^{\vee}X(M) \to \lim_j (M \otimes E_*/I_n^j) \to 0.
\]
Since all terms in the limit are degreewise finite, the $\lim^1$-term vanishes and, because $M$ is $L$-complete, this sequence then degenerates to an isomorphism $E_*^{\vee}X(M) \cong M$. Because this isomorphism is induced by a map which is compatible with the $E_*E$-comodule structure, it is also an isomorphism of Morava modules, by virtue of the canonical completion map $E_*E \to E^{\vee}_*E$. 
\end{proof}

\section{Relation to arithmetic geometry}\label{sec:chaihope}

We will relate \Cref{conj:hs} to the following hope, stated by Chai \cite[p.~753]{Chai1996group}. A similar statement is formulated in \cite[Problem 16.8]{hoveystrickland_memoir}.

\begin{hope}[Chai]\label{conj:chai}
  The only $\mathbb{G}_n$-invariant radical ideals in $E_0$ are the $I_k$ for $0 \le k \le n$. 
\end{hope}

\begin{remark} 
This statement deviates from Chai's in two respects. Firstly, Chai considers only $E_0/p$ rather than $E_0$ itself, and secondly, he considers more generally ideals invariant under open subgroups of $\mathbb{G}_n$, rather than only $\mathbb{G}_n$. Since for our present considerations only the full action is relevant, we only consider this special case of Chai's Hope. The distinction between $E_0/p$ and $E_0$ should be irrelevant, according to the following result of Chai (unpublished): The only invariant ideals of $E_0\otimes\mathbb{Q}$ are $(0)$ and $(1)$. One should be able to see that by using the differentiability of the action of $\mathbb{G}_n$
on $E_0\otimes\mathbb{Q}$ and the fact that the resulting action of the Lie-algebra can be made completely explicit.
\end{remark}

\subsection{$K(n)$-local $E$-modules}

In order to relate \cref{conj:chai} to \cref{conj:hs}, we begin with a few preliminaries. Let $\Mod_E(\Sp_{K(n)})$ be the $\infty$-category of $K(n)$-local $E$-modules, the category of $E$-modules internal to $\Sp_{K(n)}$. This category inherits a symmetric monoidal structure from $\Sp_{K(n)}$, given by the $K(n)$-local smash product over $E$, and we denote by $\Mod_E(\Sp_{K(n)})^{\dual}$ its full subcategory of dualizable objects. We will write $\Mod_E^{\omega}$ for the $\infty$-category of compact $E$-modules; note that every compact $E$-module is automatically $K(n)$-local. 

The next lemma will be used repeatedly below to translate between homogeneous and ordinary Zariski spectra. 

\begin{lemma}\label{lem:gradedungradedcomparison}
If $R_*$ is a 2-periodic evenly graded commutative ring, then 
\[
\xymatrix{\mathfrak q\mapsto \mathfrak q\cap R_0\colon \Spec^h(R_*)  \ar@<0.5ex>[r] &  \Spec(R_0) \ar@<0.5ex>[l] \noloc \fp\cdot R_*\mapsfrom \fp}
\] 
are inverse homeomorphisms.
\end{lemma}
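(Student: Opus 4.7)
The plan is to exploit 2-periodicity to essentially reduce $R_*$ to $R_0$. Concretely, by 2-periodicity, there exists a unit $u\in R_2$, so that multiplication by $u^n$ is an isomorphism $R_0\xrightarrow{\sim} R_{2n}$; combined with the even grading, this yields an isomorphism of graded rings $R_*\cong R_0[u^{\pm 1}]$ with $|u|=2$. With this identification, every homogeneous element of $R_*$ has the form $a\cdot u^n$ for a unique $a\in R_0$ and $n\in\Z$, and since $u$ is a unit, $a u^n$ lies in a homogeneous ideal if and only if $a$ does.

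I would first use this to check bijectivity on the level of homogeneous ideals (not just primes). The key point is that for any homogeneous ideal $\mathfrak q\subseteq R_*$, the identity $\mathfrak q = (\mathfrak q\cap R_0)\cdot R_*$ holds: the inclusion $\supseteq$ is clear, and the other inclusion follows because every homogeneous element of $\mathfrak q$ is $au^n$ with $a = (au^n)u^{-n} \in\mathfrak q\cap R_0$. Conversely, for $\mathfrak p\subseteq R_0$ one has $(\mathfrak p\cdot R_*)\cap R_0 = \mathfrak p$, since an element of the intersection is a finite sum $\sum p_i u^{n_i}$ with $p_i\in\mathfrak p$ whose degree-$0$ part lies in $\mathfrak p$.

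Next I would check that the bijection preserves primes in both directions. The contraction of a homogeneous prime to $R_0$ is a prime by the usual argument. For the other direction, given a prime $\mathfrak p\subseteq R_0$, the quotient ring
\[
R_*/(\mathfrak p\cdot R_*)\;\cong\;(R_0/\mathfrak p)[u^{\pm 1}]
\]
is a domain because $R_0/\mathfrak p$ is a domain and Laurent polynomial rings over domains are domains; hence $\mathfrak p\cdot R_*$ is a homogeneous prime.

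Finally, I would verify continuity of both maps. Since the Zariski topology on $\Spec^h(R_*)$ is generated by closed sets $V(I)$ for $I\subseteq R_*$ a homogeneous ideal, and similarly on $\Spec(R_0)$, it suffices to observe that under the bijection, $V(I)\subseteq\Spec^h(R_*)$ corresponds to $V(I\cap R_0)\subseteq\Spec(R_0)$, and $V(J)\subseteq\Spec(R_0)$ corresponds to $V(J\cdot R_*)\subseteq\Spec^h(R_*)$; both follow directly from the identity $\mathfrak q = (\mathfrak q\cap R_0)\cdot R_*$ established above. No step is substantially difficult; the only thing to be a bit careful about is that one is working with homogeneous ideals of a $\Z$-graded ring rather than $\N$-graded, but 2-periodicity makes this inessential.
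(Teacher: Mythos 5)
Your proof is correct and complete. The paper states this lemma without proof, treating it as a standard consequence of 2-periodicity, and your argument is exactly the standard one: write $R_* \cong R_0[u^{\pm 1}]$ for a degree-2 unit $u$, check the ideal correspondence $\mathfrak q = (\mathfrak q\cap R_0)\cdot R_*$ and $(\mathfrak p\cdot R_*)\cap R_0 = \mathfrak p$, verify that primality is preserved in both directions via $R_*/(\mathfrak p\cdot R_*)\cong (R_0/\mathfrak p)[u^{\pm 1}]$ being a domain precisely when $R_0/\mathfrak p$ is, and then read off continuity from $V(I)\leftrightarrow V(I\cap R_0)$. The only implicit convention worth flagging is that ``2-periodic'' is being read in the usual homotopy-theoretic sense as ``there is an invertible element in degree $2$,'' which is how the paper uses it for $E_*$ and its cooperations; with that reading there is no gap.
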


We will write $\supp_E$ for the $tt$-support function on $\Mod_{E}^{\omega}$, while $\supp_{E_*}$ and $\supp_{E_0}$ denote the Zariski support.

\begin{lemma}\label{prop:etheorycomparison}
The graded comparison map 
\[
\rho^*_{\Mod_{E}^{\omega}}\colon\Spc(\Mod_{E}^{\omega}) \to \Spec^h(E_*) \cong \Spec(E_0)
\]
is a homeomorphism, and for every $X\in\Mod_E^{\omega}$ we have 
    \[ 
    \rho^*_{\Mod_E^{\omega}}(\supp_E(X))=\supp_{E_*}(\pi_*(X)) =\supp_{E_0}(\pi_0(X)\oplus\pi_1(X))
    \]
under the identification of \cref{lem:gradedungradedcomparison}.
\end{lemma}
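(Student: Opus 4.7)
The approach is to apply Theorem~\ref{thm:ds} of Dell'Ambrogio--Stanley to $\cK = \Mod_E^{\omega}$. This tt-category is rigid and generated by its tensor unit $E$, whose graded endomorphism ring is $\pi_{-*}E \cong E_*$; this ring is graded Noetherian (being a Laurent extension of the complete regular local ring $E_0$) and concentrated in even degrees. Under Lemma~\ref{lem:gradedungradedcomparison}, the homogeneous primes of $E_* \cong E_0[u^{\pm 1}]$ are in bijection with the primes of $E_0 = W\mathbb{F}_{p^n}[\![u_1, \ldots, u_{n-1}]\!]$, so the remaining hypothesis of Theorem~\ref{thm:ds} amounts to asking that every prime of $E_0$ be generated by a regular sequence of length equal to its height. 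This is automatic for $n \leq 2$: a prime of height zero is generated by the empty sequence, every height-one prime is principal (as $E_0$ is a UFD), and the maximal ideal is generated by the regular system of parameters $(p, u_1, \ldots, u_{n-1})$.

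Granted the homeomorphism, the equality $\rho^*_{\Mod_E^{\omega}}(\supp_E(X)) = \supp_{E_*}(\pi_*X)$ is the second conclusion of Theorem~\ref{thm:ds}. The final equality $\supp_{E_*}(\pi_*X) = \supp_{E_0}(\pi_0 X \oplus \pi_1 X)$ is immediate from $2$-periodicity: as a graded $E_*$-module, $\pi_*X \cong (\pi_0 X \oplus \pi_1 X) \otimes_{E_0} E_*$, so for a homogeneous prime $\mathfrak q \subseteq E_*$ with $\mathfrak p := \mathfrak q \cap E_0$ one has $(\pi_*X)_{\mathfrak q} = 0$ precisely when $(\pi_0 X \oplus \pi_1 X)_{\mathfrak p} = 0$, and Lemma~\ref{lem:gradedungradedcomparison} then identifies the two supports as subsets of homeomorphic spectral spaces.

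For heights $n \geq 3$ the regular-sequence hypothesis of Theorem~\ref{thm:ds} is genuinely problematic, since in a regular local ring of dimension $\geq 3$ there exist primes (such as the defining ideal of a twisted cubic) which are not complete intersections and hence cannot be generated by any regular sequence. The fallback is to establish a symmetric monoidal equivalence between $\Mod_E^{\omega}$ and the derived category of perfect complexes of graded $E_*$-modules --- coming from the even-periodicity of $E$ and the finite global dimension of $E_*$, via an obstruction-theoretic argument in which the relevant Postnikov $k$-invariants vanish on degree grounds --- and then invoke the classical Hopkins--Neeman--Thomason theorem, which determines the Balmer spectrum of perfect complexes over any graded Noetherian ring without requiring primes to be complete intersections. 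Securing this equivalence, and verifying that it intertwines the Balmer comparison map $\rho^*$ with the classical homeomorphism $\Spc(\D^{\mathrm{perf}}) \cong \Spec^h$, is the main technical step of the proof.
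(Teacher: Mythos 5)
You take the same approach as the paper: apply \cref{thm:ds} to $\cK = \Mod_E^{\omega}$, identify $R^{*}_{\cK} \cong E_*$, and translate between graded and ungraded spectra via \cref{lem:gradedungradedcomparison}. Your treatment of the support equality via $2$-periodicity is also correct.

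However, you have misread the hypothesis of \cref{thm:ds}, and this derails the rest of your argument. The condition in Dell'Ambrogio--Stanley is \emph{local}: a graded Noetherian ring $R$ is weakly regular if, for every homogeneous prime $\fp$, the extended ideal $\fp R_{\fp}$ of the localization $R_{\fp}$ is generated by a regular sequence. (The wording of \cref{thm:ds} above, which asks for $\fp$ itself to be generated by a regular sequence of homogeneous elements, is an informal compression of this.) Weak regularity is automatic for any regular graded Noetherian ring, because a localization of a regular ring at a homogeneous prime is a regular local ring whose maximal ideal is generated by a regular system of parameters. Since $E_* = E_0[u^{\pm 1}]$ is regular at every height $n$, \cref{thm:ds} applies for all $n$, not only for $n \le 2$. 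Your concern about primes of $E_0$ that are not complete intersections in dimension $\ge 3$ is correct under the literal reading you adopted, but it is irrelevant to the theorem as proved in the cited source.

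Consequently, the proposed ``fallback'' for $n \ge 3$ is unnecessary, and it is also not a proof. You assert a symmetric monoidal equivalence between $\Mod_E^{\omega}$ and perfect complexes of graded $E_*$-modules, appealing to an unsupplied obstruction-theoretic vanishing of $k$-invariants, and you yourself flag as ``the main technical step'' the verification that this equivalence intertwines Balmer's comparison map with the Thomason homeomorphism --- yet neither step is carried out. As written, your argument therefore establishes the lemma only for $n \le 2$, whereas the paper's one-line appeal to \cref{thm:ds} handles all heights once the hypothesis is read correctly.
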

\begin{proof}
Since $E_*$ is an even regular Noetherian commutative graded ring, this is a consequence of \Cref{thm:ds} and \cref{lem:gradedungradedcomparison}.
\end{proof}

Since $E\in\CAlg(\Sp_{K(n)})$ is a commutative algebra,
the base-change functor
\[ 
E\htimes - \colon \Sp_{K(n)} \to \Mod_{E}(\Sp_{K(n)})
\]
is symmetric monoidal and thus restricts to a functor $\Sp_{K(n)}^{\dual} \to \Mod_E^{\omega}$.

\begin{lemma}\label{cor:invaraint_subset2}
For any $X \in \Sp_{K(n)}^{\dual}$ the support $\supp_E(E \htimes X)$ identifies via
\Cref{prop:etheorycomparison} with a $\mathbb{G}_n$-invariant subset of $\Spec(E_0)$. 
\end{lemma}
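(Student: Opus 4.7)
The plan is to first translate the tensor-triangular support into algebraic support via \Cref{prop:etheorycomparison}, and then to deduce $\mathbb{G}_n$-invariance from the semilinear structure on the Morava module of $X$.

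First, applying \Cref{prop:etheorycomparison} to $E \htimes X \in \Mod_E^{\omega}$, we obtain
\[
\rho^*_{\Mod_E^\omega}(\supp_E(E \htimes X)) = \supp_{E_0}\bigl(\pi_0(E \htimes X) \oplus \pi_1(E \htimes X)\bigr)
\]
as subsets of $\Spec(E_0)$. Thus it suffices to show that the right-hand side is stable under the $\mathbb{G}_n$-action on $\Spec(E_0)$ induced by the action of $\mathbb{G}_n$ on $E_0$.

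Next, I would recall that $\pi_*(E \htimes X) = E_*^{\vee}X$ is a Morava module, so it carries a continuous twisted action of $\mathbb{G}_n$: for $g \in \mathbb{G}_n$, $r \in E_*$ and $m \in E_*^\vee X$, one has $g(rm) = g(r)\cdot g(m)$. Restricting to degrees zero and one equips the $E_0$-module $M := \pi_0(E \htimes X) \oplus \pi_1(E \htimes X)$ with a $\mathbb{G}_n$-semilinear action over the $\mathbb{G}_n$-action on $E_0$.

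Finally, I would invoke the following standard general observation: if a group $G$ acts by ring automorphisms on a commutative ring $R$ and $G$-semilinearly on an $R$-module $M$, then $\supp_R(M)$ is a $G$-invariant subset of $\Spec(R)$. Indeed, for every $g\in G$ and every prime $\mathfrak p\subseteq R$, the semilinear map $g\colon M\to M$ sends the multiplicative set $R\smallsetminus\mathfrak p$ to $R\smallsetminus g(\mathfrak p)$ and hence descends to an isomorphism of abelian groups $M_{\mathfrak p}\xrightarrow{\sim} M_{g(\mathfrak p)}$ with inverse induced by $g^{-1}$; in particular $M_{\mathfrak p}=0$ if and only if $M_{g(\mathfrak p)}=0$. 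Applying this to $R=E_0$, $G=\mathbb{G}_n$, and the module $M$ above concludes the proof. No step is genuinely hard; the only point requiring minor care is bookkeeping of the direction of the $\mathbb{G}_n$-action on $\Spec(E_0)$ (via pullback of prime ideals) and its compatibility with the semilinear structure.
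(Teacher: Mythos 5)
Your proof is correct, but it takes a genuinely different route from the paper. You translate to commutative algebra via \Cref{prop:etheorycomparison}, identify $\pi_*(E\htimes X)$ with the Morava module $E_*^{\vee}X$, observe that this carries a $\mathbb{G}_n$-semilinear action, and then apply the general fact that the support of a $G$-semilinearly equivariant module over a $G$-ring is $G$-invariant. The paper instead stays at the level of Balmer spectra: for $g\in\mathbb{G}_n$ one considers the base-change functor $F_g = -\otimes_{E,g}E$ on $\Mod_E^\omega$, uses the naturality of Balmer's comparison map to identify $\Spc(F_g)$ with $\Spec(\pi_0(g))$, and then exploits the fact that $E\htimes X$ is an \emph{induced} $E$-module (so $F_g(E\htimes X)\simeq E\htimes X$) together with the formula $\supp_E(F_g(M)) = \Spc(F_g)^{-1}(\supp_E(M))$. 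The paper's argument never explicitly invokes the Morava-module structure on $E_*^{\vee}X$, only the formal functoriality of $\Spc$ and $\rho^*$ and the induced-module observation; yours uses the twisted $\mathbb{G}_n$-module structure directly and then a standard algebraic lemma. The outcomes and difficulty are comparable; the paper's version is slightly more self-contained within the tensor-triangular formalism established in Section~\ref{sec:ttgeometry}, while yours is more transparent from the commutative-algebra side.
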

\begin{proof}
  Let $g \in \mathbb{G}_n$, then $g$ gives rise to a map of commutative algebras $g \colon E \to E$, which in turn induces a functor $F_g \colon \Mod_E^{\omega} \to \Mod_E^{\omega}$ via $F_g:=-\otimes_{E,g} E$.
Under the identification of \Cref{prop:etheorycomparison} the induced map $\Spc(F_g) \colon \Spc(\Mod_E^{\omega}) \to \Spc(\Mod_E^{\omega})$ is the map $\Spec(\pi_0(g)) \colon \Spec(E_0) \to \Spec(E_0)$, as follows from the 
naturality of the comparison map $\rho_{\Mod_E^\omega}^*$.

  Now suppose that $X \in \Sp_{K(n)}^{\dual}$, then $\supp_E(F_g(E \htimes X)) = \Spc(F_g)^{-1}(\supp_E(E \htimes X))$
identifies with $\Spec(\pi_0(g))^{-1}( \supp_E(E \htimes X))$ by properties of the Balmer spectrum and the previous paragraph. But $\supp_E(F_g(E \htimes X)) = \supp_E(E \htimes X)$ because multiplication by $g^{-1}$
gives an equivalence of $E$-modules $F_g(E\htimes X)\simeq E\htimes X$ (this uses that $E\htimes X$ is an induced $E$-module). Hence $\supp_E(E \htimes X)$ identifies with $\Spec(\pi_0(g))^{-1}( \supp_E(E \htimes X))$. 
As $g\in\mathbb{G}_n$ was arbitrary, this proves the claim.
\end{proof}

\begin{proposition}\label{prop:tensor_nilpotence}
  The functor  $E\htimes - \colon \Sp_{K(n)} \to \Mod_{E}(\Sp_{K(n)})$ detects tensor-nilpotence, and hence so does its restriction to the full subcategories of dualizable objects. 
\end{proposition}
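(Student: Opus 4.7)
The plan is to recognize $E\in\CAlg(\Sp_{K(n)})$ as a descendable commutative algebra in the sense of Mathew, and then invoke the general descent-theoretic principle that descendable algebras detect tensor-nilpotence.

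First I would verify that $E$ is descendable over $L_{K(n)}S^0$, that is, that the unit of $\Sp_{K(n)}$ lies in the thick tensor ideal $\thick{E}$. This is a known consequence of the Hopkins--Ravenel smash product theorem together with the Devinatz--Hopkins presentation of $L_{K(n)}S^0\to E$ as a homotopical pro-Galois extension with group $\mathbb{G}_n$: combining the $I_n$-adic filtration of $E$ with the convergent Amitsur resolution yields a finite-length resolution of the unit by $E$-modules.

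Second, once descendability is in hand, the conclusion is formal. Let $I:=\fib(\mathbf{1}\to E)$; descendability is equivalent to $I^{\htimes N}\simeq 0$ in $\Sp_{K(n)}$ for some $N\ge 1$. Given a morphism $f\colon X\to Y$ in $\Sp_{K(n)}$ with $E\htimes f\simeq 0$, the composite $X\xrightarrow{f}Y\to E\htimes Y$ vanishes, so $f$ factors as $X\xrightarrow{\tilde f}I\htimes Y\to Y$. Taking $N$-fold smash products and using commutativity then shows that $f^{\htimes N}$ factors through $(I\htimes Y)^{\htimes N}\simeq I^{\htimes N}\htimes Y^{\htimes N}\simeq 0$, hence $f^{\htimes N}\simeq 0$.

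The restriction to dualizable subcategories asserted in the second half of the proposition is immediate: the defining property of tensor-nilpotence detection restricts to any full tensor-triangulated subcategory, and $E\htimes-$ preserves dualizability. The main obstacle is verifying the descendability of $E$ over $L_{K(n)}S^0$, which is not established earlier in the excerpt but is well-known in the $K(n)$-local context and ultimately relies on the full strength of the Hopkins--Ravenel smash product theorem.
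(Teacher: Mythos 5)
Your strategy coincides with the paper's: reduce the statement to descendability of $E$ over $L_{K(n)}S^0$ (both you and the paper attribute this to the Hopkins--Ravenel smash product theorem), and then deduce tensor-nilpotence detection from descendability. The paper simply cites this last implication from Mathew's work (Proposition 3.26 in the reference \texttt{mathew\_galois}), whereas you spell out the factoring argument from scratch. That is a reasonable choice, and the factorization you describe is the right one.

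However, your stated characterization of descendability contains an error. You assert that descendability of $E$ is equivalent to $I^{\htimes N}\simeq 0$ for some $N$, where $I=\fib(\mathbf{1}\to E)$. That is too strong: descendability is equivalent to the weaker condition that the $N$-fold composite $\iota^{\htimes N}\colon I^{\htimes N}\to \mathbf{1}$ is null for some $N$ (equivalently, that the pro-system $\{I^{\htimes n}\}_n$ is pro-zero). In general the objects $I^{\htimes N}$ themselves do not vanish; for example, in $D(\Z/p^2)$ the commutative algebra $\Z/p$ is descendable, but $I\simeq\Z/p$ and $I^{\otimes^L n}\neq 0$ for every $n$. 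Fortunately, your argument only requires the correct, weaker statement: $f^{\htimes N}$ factors as $\tilde f^{\htimes N}$ followed by $\iota^{\htimes N}\htimes 1_{Y^{\htimes N}}\colon I^{\htimes N}\htimes Y^{\htimes N}\to Y^{\htimes N}$, and the latter is null once $\iota^{\htimes N}$ is. With this small repair, your proof is correct and amounts to reproving the direction of Mathew's Proposition 3.26 that the paper invokes.
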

\begin{proof}
By \cite[Proposition 3.26]{mathew_galois}, the first statement of the proposition is equivalent to show that $E \in \CAlg(\Sp_{K(n)})$ admits descent, i.e., the thick tensor-ideal generated by $E$ is all of $\Sp_{K(n)}$. This follows from the proof of the smash product theorem \cite[Chapter 8]{ravenel_orangebook}, see for example \cite[Proposition 10.10]{mathew_galois}. 
\end{proof}

The category of dualizable objects in $\Mod_{E}(\Sp_{K(n)})$ admits a more familiar description, due to Mathew~\cite[Proposition 10.11]{mathew_galois}:

\begin{proposition}[Mathew]\label{prop:dual_perfect}
The inclusion functor $\Mod_E^{\omega} \xrightarrow{\sim} \Mod_E(\Sp_{K(n)})^{\dual}$ from compact $E$-modules to dualizable $K(n)$-local $E$-modules is an equivalence. 
\end{proposition}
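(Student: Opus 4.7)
The plan is to prove the equivalence in two separate steps, with the second being the substantive one.

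\textbf{Well-definedness and fully faithfulness.} I would first observe that every compact $E$-module lies in the thick subcategory of $\Mod_E$ generated by $E$. Since $E$ itself is $K(n)$-local and the $K(n)$-local objects form a localizing subcategory of $\Mod_E$, every $M \in \Mod_E^{\omega}$ is automatically $K(n)$-local, and hence an object of $\Mod_E(\Sp_{K(n)})$. The Bousfield localization $L_{K(n)} \colon \Mod_E \to \Mod_E(\Sp_{K(n)})$ is symmetric monoidal from $(\Mod_E, \otimes_E)$ to $(\Mod_E(\Sp_{K(n)}), \htimes_E)$ and therefore sends dualizable objects to dualizable objects. Since compact $E$-modules coincide with the dualizable objects of $\Mod_E$, the inclusion $\Mod_E^{\omega} \hookrightarrow \Mod_E(\Sp_{K(n)})^{\dual}$ is well-defined. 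Fully faithfulness is immediate because morphisms in $\Mod_E$ with $K(n)$-local target are unchanged by $K(n)$-localization.

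\textbf{Essential surjectivity.} Let $M \in \Mod_E(\Sp_{K(n)})^{\dual}$. The strategy is to reduce to an algebraic finite-generation statement: once $\pi_* M$ is known to be a finitely generated $E_*$-module, the regularity of $E_*$ (finite global dimension $n$), together with a standard cell-attachment argument against a finite free resolution of $\pi_* M$, shows that $M$ is a finite iterated cofiber of finite direct sums of shifts of $E$, placing $M$ in $\Mod_E^{\omega}$.

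To prove finite generation, I would form the iterated cofiber $\overline{E} = E/(p, u_1, \ldots, u_{n-1})$ along the regular sequence defining $I_n$, viewed as an $\mathbb{E}_1$-$E$-algebra with $\pi_* \overline{E} \cong K(n)_*$. Since $\overline{E}$ has $I_n$-torsion homotopy, it is automatically $K(n)$-local, so $M \htimes_E \overline{E} \simeq M \otimes_E \overline{E}$. Base-change is monoidal, so this object is dualizable in $\Mod_{\overline{E}}(\Sp_{K(n)})$, and because $\pi_* \overline{E} = K(n)_*$ is a graded field, its homotopy is a finite-dimensional graded $K(n)_*$-vector space. The Künneth spectral sequence
\[
E^2_{s,t} = \Tor^{E_*}_{s,t}(\pi_* M, K(n)_*) \Longrightarrow \pi_{s+t}(M \otimes_E \overline{E})
\]
converges strongly because $E_*$ has global dimension $n$, so finite-dimensionality of the target forces finite-dimensionality of $\pi_* M / I_n \cong \Tor^{E_*}_0(\pi_* M, K(n)_*)$. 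Since $M$ is $K(n)$-local, $\pi_* M$ is $L$-complete in the sense of \cite[Appendix A]{hoveystrickland_memoir}, and the $L$-complete Nakayama lemma then upgrades this to finite generation of $\pi_* M$ over $E_*$, as required.

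\textbf{Main obstacle.} The delicate point is justifying that dualizability of $M$ in $\Mod_E(\Sp_{K(n)})$ propagates to a genuine finiteness statement for $\pi_*(M \otimes_E \overline{E})$, given that $\overline{E}$ need not admit an $\mathbb{E}_{\infty}$-structure. I would address this inductively along the cofiber sequences $E \to E \to E/p \to \cdots \to \overline{E}$: each step preserves dualizability of the base-change $M \otimes_E -$, and one can in fact sidestep any symmetric monoidal structure on $\Mod_{\overline{E}}$ by working directly with the coevaluation and evaluation maps for $M$, tensored over $E$ with $\overline{E}$, to exhibit $\pi_*(M \otimes_E \overline{E})$ as a retract of a finite-dimensional $K(n)_*$-vector space. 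The remainder of the argument is then standard complete local Noetherian algebra.
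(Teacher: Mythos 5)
Your overall strategy matches the expected approach (reduce essential surjectivity to finite generation of $\pi_*M$ over $E_*$ for dualizable $M$, prove this via a residue-field computation and $L$-complete Nakayama, then finish with a cell-attachment argument over the regular Noetherian ring $E_*$), and the well-definedness and fully-faithfulness step is correct. However, the central finiteness step has a genuine gap, which you partly flag but do not close, and there is a smaller directional error.

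The substantive issue is the claim that $\pi_*(M\otimes_E\overline{E})$ is degreewise finite. Your proposed repair --- inducting along the cofiber sequences $E\to E/p\to\cdots\to\overline{E}$ and exhibiting $\pi_*(M\otimes_E\overline{E})$ as a retract of a finite-dimensional $K(n)_*$-vector space via (co)evaluation tensored with $\overline{E}$ --- is not worked out, and as stated does not obviously run: it is unclear what ambient finite module realizes the retraction, and the intermediate quotients $E/(p,\dots,u_{j-1})$ for $j<n$ are not even compact in $\Mod_E(\Sp_{K(n)})$ (for instance $E/p$ has homotopy $E_*/p$, which is not $I_n$-power torsion when $n\ge 2$), so the proposed induction is not set up correctly. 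A correct argument requires no multiplicative structure on $\overline{E}$ at all. Choose a generalized type $n$ Moore spectrum $F$ so that $\overline{E}\simeq E\otimes F$; then $\overline{E}$ is a compact generator of $\Mod_E(\Sp_{K(n)})$. Dualizability of $M$ gives an equivalence $\iHom_E(M\otimes_E\overline{E},-)\simeq\iHom_E(\overline{E},DM\htimes_E -)$, a composite of colimit-preserving functors, so $M\otimes_E\overline{E}$ is compact in $\Mod_E(\Sp_{K(n)})$. Compact objects of $\Mod_E(\Sp_{K(n)})$ lie in the thick subcategory generated by $\overline{E}$ and therefore have finitely generated, $I_n$-power-torsion (hence degreewise finite) homotopy, which is exactly the finiteness you need.

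The second, smaller point is that your deduction from the K\"unneth spectral sequence goes the wrong way. In that spectral sequence the edge homomorphism
\[
\Tor_0^{E_*}(\pi_*M,K(n)_*)=\pi_*M/I_n\longrightarrow\pi_*(M\otimes_E\overline{E})
\]
can have a large kernel built from differentials off the higher $\Tor$-lines; finiteness of the abutment therefore does not by itself bound $\Tor_0$. The easy correct substitute is the chain of surjections produced by the $n$ Bockstein cofiber sequences defining $\overline{E}$, which gives $\pi_*(M\otimes_E\overline{E})\twoheadrightarrow\pi_*M/I_n$ directly. With that in place, the $L$-complete Nakayama step (using that $\pi_*M$ is $L$-complete because $K(n)$-localization agrees with derived $I_n$-completion on $E$-modules) and the cell-attachment argument over $E_*$ go through as you describe.
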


Combining \cref{thm:tensor_nilpotence_surjectivity,prop:tensor_nilpotence,prop:etheorycomparison} we deduce the following. 

\begin{corollary}\label{cor:surjective_balmer}
The map 
\[ 
\Spec(E_0)\longrightarrow\Spc(\Sp_{K(n)}^{\dual}),\,\mathfrak p\mapsto P(\mathfrak p):=\left\{ X\in
\Sp_{K(n)}^{\dual}\,\mid\, \left( E_0^\vee(X)\oplus E_1^\vee(X)\right)_{\mathfrak p} = 0\right\}
\]
is well-defined, continuous and surjective. 
\end{corollary}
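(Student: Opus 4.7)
The plan is to apply Balmer's surjectivity theorem to the base-change functor $\phi := E \htimes - \colon \Sp_{K(n)}^{\dual} \to \Mod_E^{\omega}$ and then read off the explicit description by unravelling the identification of $\Spc(\Mod_E^\omega)$ with $\Spec(E_0)$. First, the category $\Sp_{K(n)}^{\dual}$ is by construction rigid and $\phi$ detects tensor-nilpotence by \Cref{prop:tensor_nilpotence}, so \Cref{thm:tensor_nilpotence_surjectivity} ensures that
\[
\Spc(\phi) \colon \Spc(\Mod_E^{\omega}) \longrightarrow \Spc(\Sp_{K(n)}^{\dual})
\]
is continuous and surjective. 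Precomposing with the homeomorphism $\Spec(E_0) \xrightarrow{\sim} \Spc(\Mod_E^{\omega})$ from \Cref{prop:etheorycomparison} produces a continuous surjection $\Psi \colon \Spec(E_0) \to \Spc(\Sp_{K(n)}^{\dual})$.

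It then suffices to check that $\Psi$ is given by the formula $\mathfrak p \mapsto P(\mathfrak p)$. By functoriality of $\Spc$, the prime tensor-ideal $\Psi(\mathfrak p)$ equals $\phi^{-1}(\mathfrak q_{\mathfrak p})$, where $\mathfrak q_{\mathfrak p} \in \Spc(\Mod_E^{\omega})$ is the prime corresponding to $\mathfrak p$. Unravelling via the support formula in \Cref{prop:etheorycomparison}, this consists of those $X \in \Sp_{K(n)}^{\dual}$ with $\mathfrak p \notin \supp_{E_0}(\pi_0(E \htimes X) \oplus \pi_1(E \htimes X))$; equivalently, using $\pi_*(E \htimes X) = E_*^{\vee} X$ by definition, with $(E_0^\vee X \oplus E_1^\vee X)_{\mathfrak p} = 0$. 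This is precisely $P(\mathfrak p)$, which is therefore automatically a prime tensor-ideal, and the map in the statement is well-defined, continuous, and surjective.

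There is no genuine obstacle here; the substance has already been absorbed into the cited ingredients, namely the smash-product theorem underlying \Cref{prop:tensor_nilpotence}, the Dell'Ambrogio--Stanley homeomorphism criterion (\Cref{thm:ds}) applied to the regular Noetherian graded ring $E_*$, and the comparison between homogeneous and ungraded spectra in \Cref{lem:gradedungradedcomparison}. The corollary is essentially a packaging of these three inputs, with the only bookkeeping task being to translate the abstract tensor-triangular condition $E \htimes X \in \mathfrak q_{\mathfrak p}$ into the concrete vanishing condition on the Morava module of $X$ displayed in the statement.
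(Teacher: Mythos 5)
Your proposal is correct, and it follows the same overall strategy as the paper: apply Balmer's surjectivity theorem (\cref{thm:tensor_nilpotence_surjectivity}) together with \cref{prop:tensor_nilpotence} to the base-change functor $\phi = E\,\widehat{\otimes}\, -$, identify $\Spc(\Mod_E^\omega)\cong\Spec(E_0)$ via \cref{prop:etheorycomparison}, and then unwind the resulting composite into the explicit formula $\mathfrak p\mapsto P(\mathfrak p)$. The one place where you deviate, and in a way that is actually a small improvement, is in the unwinding step. The paper translates $E\,\widehat{\otimes}\, X\in\rho^{-1}(\mathfrak p)$ into $\supp_E(E\,\widehat{\otimes}\, X)\subseteq\supp_E(\rho^{-1}(\mathfrak p))$ via the classification theorem, producing the condition $(E_0^\vee X\oplus E_1^\vee X)_{\mathfrak q}=0$ for \emph{all} $\mathfrak q\subseteq\mathfrak p$, and then invokes finite generation plus Nakayama to collapse this to the single localization at $\mathfrak p$. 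You instead use directly the definition of support, $Z\in\cP$ if and only if $\cP\notin\supp(Z)$, which gives the single-point condition $\mathfrak p\notin\supp_{E_0}(E_0^\vee X\oplus E_1^\vee X)$, i.e.\ $(E_0^\vee X\oplus E_1^\vee X)_{\mathfrak p}=0$, in one step and with no finiteness input. Both arguments are valid; yours is a bit more economical.
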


Observe that using the Tor spectral sequence it is easy to see directly that $P(\mathfrak p)\subseteq \Sp_{K(n)}^{\dual}$
is a thick tensor-ideal. It seems less immediate, however, that it is indeed prime.

\begin{proof}[Proof of \Cref{cor:surjective_balmer}]
We abbreviate $\rho:=\rho^*_{\Mod_E^\omega}$.
Consider the composite 
\[ 
\xymatrixcolsep{4pc}
\xymatrix{
\Spec(E_0)\simeq\Spec^h(E_*) \ar[r]_-{\sim}^-{\rho^{-1}} & \Spc(\Mod_E^\omega)\ar[r]^-{\Spc(E\htimes -)} & \Spc(\Sp_{K(n)}^{\dual}),
}
\]
which is continuous and surjective (using \Cref{thm:tensor_nilpotence_surjectivity} and \Cref{prop:tensor_nilpotence} for the surjectivity). 
Our task is to show that this map is given by $\mathfrak p\mapsto\ P(\mathfrak p)$.
For this, we fix $X\in\Sp_{K(n)}^{\dual}$ and reformulate in several steps the condition that $X\in\Spc(E\htimes-)(\rho^{-1}(\mathfrak p))$.
By the definition of $\Spc(E\htimes -)$, it is equivalent to demand that $E\htimes X\in \rho^{-1}(\mathfrak p)$, i.e., that 
\[ 
\supp_E(E\htimes X)\subseteq\supp_E(\rho^{-1}(\mathfrak p))=\{ \mathfrak q\textbf{}\,\mid\, \rho^{-1}(\mathfrak p)\nsubseteq \mathfrak q\},
\]
by the definition of support of an ideal. If we apply $\rho$ which is bijective and reverses the inclusion of prime ideals
and recall \Cref{prop:etheorycomparison}, we see that this condition is equivalent to 
\[ 
\supp_{E_0}(E_0^\vee(X)\oplus E_1^\vee(X))=\rho(\supp_E(E\htimes X))\subseteq \{ \mathfrak q\,\mid\,  \mathfrak q\nsubseteq \mathfrak p \}.
\]
This means that for all $\mathfrak q\subseteq \mathfrak p$ we have $(E_0^\vee(X)\oplus E_1^\vee(X))_{\mathfrak q} =0$.
Since the $E_0$-module $E_0^\vee(X)\oplus E_1^\vee(X)$ is finitely generated, the Nakayama lemma shows that these conditions for all
$\mathfrak q\subseteq \mathfrak p$ are equivalent to the single condition $(E_0^\vee(X)\oplus E_1^\vee(X))_{\mathfrak p} =0$, i.e., to
$X\in P(\mathfrak p)$.
\end{proof}

\subsection{The relation between the Hovey--Strickland Conjecture and Chai's Hope}

Recall the definition of the thick tensor-ideals $\cal{D}_k \subseteq \Sp_{K(n)}^{\dual}$ from \cref{defn:ttidealsdk}.

\begin{theorem}\label{thm:chaiimplieshs}
  If \Cref{conj:chai} holds, then every thick tensor-ideal of $\Sp_{K(n)}^{\dual}$ is equal to $\cal{D}_k$ for some $0 \leq k \le n+1$. In particular, \Cref{conj:chai} implies \Cref{conj:hs}. 
\end{theorem}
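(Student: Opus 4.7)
The plan is to take an arbitrary thick tensor-ideal $\cC\subseteq\Sp_{K(n)}^{\dual}$ and attach to each $X\in\Sp_{K(n)}^{\dual}$ a natural invariant $k(X)\in\{0,\ldots,n+1\}$, then recover $\cC=\cD_k$ with $k:=\min_{X\in\cC}k(X)$. The construction of $k(X)$ is where Chai's Hope enters. Since $X$ is dualizable the graded $E_0$-module $E_0^{\vee}X\oplus E_1^{\vee}X$ is finitely generated, so $S(X):=\supp_{E_0}(E_0^{\vee}X\oplus E_1^{\vee}X)$ is closed in $\Spec(E_0)$, and \Cref{cor:invaraint_subset2} ensures that it is $\mathbb{G}_n$-invariant. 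Under \Cref{conj:chai}, together with the convention $I_{n+1}=(1)$, the closed $\mathbb{G}_n$-invariant subsets of $\Spec(E_0)$ are exactly the pairwise distinct loci $V(I_j)$ for $0\le j\le n+1$; hence there is a unique $k(X)$ with $S(X)=V(I_{k(X)})$. By \Cref{prop:hs_dk}, the condition $X\in\cD_j$ is equivalent to $S(X)\subseteq V(I_j)$, and hence to $k(X)\ge j$.

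With $k=k(\cC):=\min_{X\in\cC}k(X)$, the containment $\cC\subseteq\cD_k$ is automatic because every $X\in\cC$ satisfies $k(X)\ge k$. The reverse inclusion is the content of the theorem. When $k=n+1$ we have $\cC=(0)=\cD_{n+1}$, so I may assume $k\le n$ and pick $X_0\in\cC$ realizing the minimum, i.e.\ with $S(X_0)=V(I_k)$.

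The remaining step, which I regard as the main technical obstacle once Chai's Hope has been invoked, is to translate this equality of $E_0$-supports into a containment of thick tensor-ideals in $\Sp_{K(n)}^{\dual}$. I would route the argument through the base-change $E\htimes-\colon\Sp_{K(n)}^{\dual}\to\Mod_E^{\omega}$ and work in the algebraic target. Under the homeomorphism $\Spc(\Mod_E^{\omega})\cong\Spec(E_0)$ of \Cref{prop:etheorycomparison}, the support of $E\htimes X_0$ is $V(I_k)$, while the support of $E\htimes L_{K(n)}F(k)$ is contained in $V(I_k)$ because $L_{K(n)}F(k)\in\cD_k$ by \Cref{prop:hs_dk}. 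Since $\Mod_E^{\omega}$ is rigid, \Cref{lem:support} yields $E\htimes L_{K(n)}F(k)\in\langle E\htimes X_0\rangle$, and \Cref{prop:tensor_nilpotence} combined with \Cref{lem:thick_equiv} then lifts this to $L_{K(n)}F(k)\in\langle X_0\rangle\subseteq\cC$. Invoking the identification $\cD_k=\langle L_{K(n)}F(k)\rangle$ from \Cref{prop:hs_dk} yields the reverse inclusion $\cD_k\subseteq\cC$, hence $\cC=\cD_k$; the stated implication to \Cref{conj:hs} is then just a restatement of this classification.
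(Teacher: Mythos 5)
Your proof is correct and takes essentially the same route as the paper's: identify the support of $E\htimes X$ in $\Spec(E_0)$, invoke \Cref{cor:invaraint_subset2} to see it is a $\mathbb{G}_n$-invariant closed subset, use Chai's Hope to pin it down as $V(I_k)$, and then descend the containment of thick tensor-ideals from $\Mod_E^{\omega}$ to $\Sp_{K(n)}^{\dual}$ via \Cref{lem:support}, \Cref{prop:tensor_nilpotence}, and \Cref{lem:thick_equiv}. The only cosmetic difference is organizational: you define the invariant $k(X)$ upfront for all dualizable $X$ and then minimize over $\cC$, whereas the paper first chooses the largest $k$ with $\cC\subseteq\cD_k$ and then uses Chai's Hope to force the support equality for a witnessing $X$; the underlying mathematics is the same.
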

\begin{proof}
  Let $(0)\neq\cC$ be a thick tensor-ideal of $\Sp_{K(n)}^{\dual}$. Because $\cal{D}_0 = \Sp_{K(n)}^{\dual}$, there is a largest $k$ such that $\cC \subseteq \cD_k$ and since $\cC\neq (0)=\cal{D}_{n+1}$, we have $0\leq k\leq n$. By our choice of $k$ and \Cref{prop:hs_dk} there is some $X \in \cC$
such that $E_*^\vee X$ is $I_k$-torsion, but not $I_{k+1}$-torsion. By \Cref{prop:etheorycomparison} $\rho^*_{\Mod_E^\omega}(\supp_E(E \htimes X)) = \supp_{E_*}(E_*^\vee X)$, and hence by \cite[Lemma 2.2 and Lemma 2.4]{BensonIyengarKrause2008Local} we have 
\begin{equation}\label{eq:supp_torsion}
\rho^*_{\Mod_E^\omega}(\supp_E(E \htimes X)) = \supp_{E_*}(E_*^\vee X) \subseteq V(I_k) \mbox{ but }
\end{equation}
\[ 
\rho^*_{\Mod_E^\omega}(\supp_E(E \htimes X)) = \supp_{E_*}(E_*^\vee X) \not\subseteq V(I_{k+1}).
\]

Let $L_{K(n)}F(k)$ be a generator of $\cal{D}_k$, i.e., the $K(n)$-localization of a finite spectrum $F(k)$ of type $k$. We want to show that $\cD_k \subseteq \cC$ which follows if $L_{K(n)}F(k) \in \langle X \rangle$. By \cref{lem:thick_equiv,cor:surjective_balmer} it is enough to show that $E \htimes F(k) \in \langle E \htimes X \rangle$. Observe that using \Cref{prop:etheorycomparison} again gives $\rho^*_{\Mod_E^\omega}(\supp_E(E \htimes F(k))) = \supp_{E_*}(E_*^{\vee}F(k)) = V(I_k)$, and so by \Cref{lem:support} it suffices to establish the inclusion  $V(I_k) \subseteq \rho^*_{\Mod_E^\omega}(\supp(E \htimes X))$. 

By \Cref{cor:invaraint_subset2} the support of $\rho^*_{\Mod_E^\omega}(\supp_E(E \htimes X))$ is a $\mathbb{G}_n$-invariant subset of $\Spec(E_0)$, and hence by \Cref{conj:chai} is of the form $V(I_t)$ for some $0 \le t \le n$. However, by \eqref{eq:supp_torsion} we must therefore have that $\rho^*_{\Mod_E^\omega}(\supp_E(E \htimes X)) = V(I_k)$, and hence $\cC = \cal{D}_k$ by \cref{prop:hs_dk}, so we are done. 
\end{proof}

\begin{corollary}\label{cor:dualizable_n=1}
The Balmer spectrum $\Spc(\Sp_{K(1)}^{\dual}) = \{ \cal{D}_1,\cal{D}_2 \}$ is a two point space with $(0)=\cal{D}_2 \subsetneq \cal{D}_1=\Sp_{K(1)}^\omega$. Consequently, the Hovey--Strickland Conjecture holds for $n=1$ and any prime.
\end{corollary}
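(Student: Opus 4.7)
The plan is to deduce this corollary from \Cref{thm:chaiimplieshs} by verifying Chai's Hope in the trivial case $n=1$.

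For $n=1$ the Lubin--Tate ring is $E_0 = W\F_p\llbracket u_1,\ldots,u_0\rrbracket = \Z_p$, a discrete valuation ring whose spectrum has precisely the two prime ideals $(0) = I_0$ and $(p) = I_1$. In particular, every radical ideal of $E_0$ is of the form $I_k$ for some $0 \le k \le 1$, so Chai's Hope (\Cref{conj:chai}) holds vacuously at height one, regardless of the $\mathbb{G}_1$-action. By \Cref{thm:chaiimplieshs}, the Hovey--Strickland Conjecture therefore holds at height one and any prime: every thick tensor-ideal of $\Sp_{K(1)}^{\dual}$ equals $\cD_k$ for some $0 \le k \le 2$.

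It remains to read off the Balmer spectrum. By \Cref{prop:hs_vs_spc}, the verification of the Hovey--Strickland Conjecture is equivalent to the statement
\[
\Spc(\Sp_{K(1)}^{\dual}) = \{\cD_1, \cD_2\},
\]
with the closure operator $\overline{\{\cD_k\}} = \{\cD_i \mid i \ge k\}$ and $\cD_2 = (0) \subsetneq \cD_1 = \Sp_{K(1)}^{\omega}$, where the final equality $\cD_1 = \Sp_{K(1)}^{\omega}$ is the identification $\cD_n = \Sp_{K(n)}^{\omega}$ noted below \Cref{prop:hs_dk}. This finishes the proof.

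Since every step of the argument is simply an application of results already established, there is no genuine obstacle here; the corollary is essentially an observation that the arithmetic input needed to drive \Cref{thm:chaiimplieshs} is automatic at height one.
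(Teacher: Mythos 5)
Your proposal is correct and follows the paper's proof exactly: identify $E_0 \cong \Z_p$ at height one, observe that Chai's Hope (\Cref{conj:chai}) holds there (the paper says this is because $\mathbb{G}_1$ acts trivially; you note, equivalently, that $\Z_p$ has so few radical ideals that the Hope holds regardless of the action), and then invoke \Cref{thm:chaiimplieshs} and \Cref{prop:hs_vs_spc}. The only difference is that you spell out the DVR structure of $\Z_p$ explicitly, which is a harmless elaboration of the same argument.
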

\begin{proof}
When $n = 1$ we have $E_0 \cong \Z_p$ with $\mathbb{G}_1$ acting trivially, so it is clear that \Cref{conj:chai} holds and the result follows from \cref{thm:chaiimplieshs} and \cref{prop:hs_vs_spc}.
\end{proof}

\begin{remark} For odd primes, Hahn and Mitchell have computed the set of {\em all} thick subcategories 
(not necessarily tensor-ideal) of $\Sp_{K(1)}^{\dual}$ in Iwasawa-theoretic terms \cite[Section 10]{HahnMitchell2007Iwasawa}. It is not too hard to check directly that the only primes they find are the two given by \Cref{cor:dualizable_n=1}, but we will not include a detailed proof of this. By contrast, finding all thick subcategories of $\Sp_{K(2)}^{\dual}$ seems out of reach.
\end{remark}

For sufficiently large primes, the converse also holds. 

\begin{theorem}\label{thm:hsimplieschai}
  Suppose that $2p-2 > n^2+n$. Then \Cref{conj:hs} implies \Cref{conj:chai}. 
\end{theorem}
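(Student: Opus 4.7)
The plan is to produce, from any $\mathbb{G}_n$-invariant radical ideal $J\subseteq E_0$, a dualizable spectrum whose thick tensor-ideal forces $J$ to lie in the list $\{I_0,\dots,I_n, E_0\}$. Since $J$ is $\mathbb{G}_n$-stable, the $2$-periodic graded quotient $M := E_*/JE_*$ inherits a continuous $\mathbb{G}_n$-action and, as $E_0$ is Noetherian, is finitely generated over $E_*$; it is therefore a finitely generated Morava module. The hypothesis $2p-2>n^2+n$ then allows us to invoke \cref{thm:realization} to realize $M$, producing $X\in\Sp_{K(n)}^{\dual}$ with $E_*^{\vee}X \cong M$ as Morava modules.

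Next apply the Hovey--Strickland Conjecture to the thick tensor-ideal $\langle X\rangle\subseteq\Sp_{K(n)}^{\dual}$: it equals $\cal{D}_k$ for some $0\leq k\leq n+1$, and by \cref{prop:hs_dk} we have $\cal{D}_k = \langle L_{K(n)}F(k)\rangle$ for a finite type $k$ spectrum $F(k)$ (with the convention that $L_{K(n)}F(n+1) = 0$). Since $\Sp_{K(n)}^{\dual}$ is rigid, \cref{lem:support} gives the equality $\supp(X) = \supp(L_{K(n)}F(k))$ inside $\Spc(\Sp_{K(n)}^{\dual})$. Pulling this back along the continuous surjection $\pi\colon\Spec(E_0)\twoheadrightarrow\Spc(\Sp_{K(n)}^{\dual})$ of \Cref{cor:surjective_balmer}, and noting that by construction of $\pi$ one has $\pi^{-1}(\supp(Y)) = \supp_{E_0}(E_0^{\vee}Y\oplus E_1^{\vee}Y)$ for every dualizable $Y$, yields
\[
V(J)\;=\;\supp_{E_0}(E_0/J)\;=\;\pi^{-1}(\supp(X))\;=\;\pi^{-1}(\supp(L_{K(n)}F(k)))\;=\;V(I_k),
\]
where the last equality uses that $E_0^{\vee}(L_{K(n)}F(k))$ has Zariski support exactly $V(I_k)$ (and $V(I_{n+1}) = \emptyset$ by convention).

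Finally, since both $J$ and $I_k$ are radical ideals, the equality $V(J) = V(I_k)$ of Zariski-closed subsets of $\Spec(E_0)$ forces $J = I_k$, which is the content of Chai's Hope. The main subtleties are simply handling the two edge cases: if $k = n+1$ then $X\simeq 0$, $M = 0$, and $J = E_0$ (matching $I_{n+1}=(1)$), while if $k = 0$ then $V(J) = \Spec(E_0)$ and hence $J = (0) = I_0$ since $E_0$ is a domain. Beyond these, the essential ingredients are all in hand: \cref{thm:realization} to transport from algebra to homotopy, the Hovey--Strickland classification of thick tensor-ideals, and the supports calculus on the rigid category $\Sp_{K(n)}^{\dual}$ together with the comparison provided by \cref{cor:surjective_balmer}.
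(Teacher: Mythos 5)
Your argument is correct and essentially the same as the paper's. You realize $E_*/JE_*$ via \cref{thm:realization}, invoke \cref{conj:hs} to identify $\langle X\rangle$ with some $\cD_k=\langle L_{K(n)}F(k)\rangle$, and then translate the equality of thick tensor-ideals into the equality $V(J)=V(I_k)$ of Zariski supports, from which $J=I_k$ follows since both are radical. The only cosmetic difference is that you pull back the equality $\supp(X)=\supp(L_{K(n)}F(k))$ directly along the surjection $\pi\colon\Spec(E_0)\to\Spc(\Sp_{K(n)}^{\dual})$ from \Cref{cor:surjective_balmer} (using the identity $\pi^{-1}(\supp(Y))=\supp_{E_0}(E_0^\vee Y\oplus E_1^\vee Y)$, which unwinds to $\rho^*_{\Mod_E^\omega}\circ\Spc(E\htimes -)^{-1}$), whereas the paper instead passes through $\langle E\htimes X\rangle=\langle E\htimes F(k)\rangle$ in $\Mod_E^\omega$ and applies \Cref{prop:etheorycomparison}; these are the same computation in different order.
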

\begin{proof}
Let $I \subseteq E_0$ be an invariant radical ideal. By \Cref{thm:realization} we can find $X \in \Sp_{K(n)}^{\dual}$ such that $E_*^\vee X \cong E_*/I$. In particular,
\[
\rho^*_{\Mod_E^\omega}(\supp_E(E \htimes X)) = \supp_{E_*}(E_*^\vee X) = V(I), 
\]
where we use \Cref{prop:etheorycomparison} again. 

Now consider the thick tensor-ideal $\langle X \rangle$. Assuming \Cref{conj:hs} we must have that $\langle X \rangle = \langle L_{K(n)}F(k) \rangle$ for some $0 \le k \le n+1$ (with $F(n+1)=0$). 
By \Cref{lem:thick_equiv} $\langle E \htimes X \rangle = \langle E \htimes F(k) \rangle$. In particular, 
\[ \rho_{\Mod_E^\omega}^*(\supp_E(E \htimes X)) = \rho_{\Mod_E^\omega}^*(\supp_E(E \htimes F(k))) = V(I_k).\] 
Hence $V(I_k) = V(I)$. However, then $I_k=\sqrt{I_k} = \sqrt{I}=I$, and so \Cref{conj:chai} holds. 
\end{proof}
\begin{corollary}
  For $2p-2 > n^2+n$, \Cref{conj:hs,conj:chai} are equivalent.
\end{corollary}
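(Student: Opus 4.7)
The plan is to simply combine the two theorems immediately preceding this corollary, since together they give both implications under the stated hypothesis on the prime.

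First I would invoke \Cref{thm:chaiimplieshs}, which asserts unconditionally (that is, for any height $n$ and any prime $p$) that \Cref{conj:chai} implies \Cref{conj:hs}. This direction requires no smallness constraint on $p$: the argument runs through the $tt$-geometric framework from \Cref{sec:ttgeometry}, using that $E\htimes -$ detects tensor-nilpotence (\Cref{prop:tensor_nilpotence}), the surjectivity of $\Spc(E\htimes-)$ onto $\Spc(\Sp_{K(n)}^{\dual})$ via \Cref{cor:surjective_balmer}, and the $\mathbb{G}_n$-invariance of supports from \Cref{cor:invaraint_subset2}.

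Second, I would invoke \Cref{thm:hsimplieschai} for the reverse implication. This is where the hypothesis $2p-2>n^2+n$ is needed: the argument relies on the realizability statement \Cref{thm:realization}, which allows one to produce, for any $\mathbb{G}_n$-invariant radical ideal $I\subseteq E_0$, a dualizable $K(n)$-local spectrum $X$ whose Morava module is $E_*/I$. Assuming \Cref{conj:hs}, the thick tensor-ideal $\langle X\rangle$ must equal some $\cal{D}_k=\langle L_{K(n)}F(k)\rangle$, and transporting this equality through $\rho^*_{\Mod_E^\omega}$ forces $V(I)=V(I_k)$, hence $I=I_k$ by radicality. Since $\Cref{thm:realization}$ fails in general outside the range $2p-2>n^2+n$, this is the only implication that really requires the hypothesis on $p$.

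Combining these two theorems under the common hypothesis $2p-2>n^2+n$ yields the asserted equivalence, and there is no genuine obstacle beyond citing the two results. The only minor point to flag is that \Cref{thm:chaiimplieshs} holds without restriction, so the bound on $p$ is used solely for the direction $\Cref{conj:hs}\Rightarrow\Cref{conj:chai}$.
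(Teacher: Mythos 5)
Your proof is correct and matches the paper's (implicit) reasoning exactly: the corollary is an immediate combination of \Cref{thm:chaiimplieshs} and \Cref{thm:hsimplieschai}, and you correctly observe that the hypothesis on $p$ is only needed for the latter direction via \Cref{thm:realization}.
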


\subsection{The case $n = 2$}
The goal of this section is to verify Chai's Hope when $n = 2$, and hence to prove the following. 
\begin{theorem}\label{thm:dualizable_n=2}
  The Balmer spectrum $\Spc(\Sp_{K(2)}^{\dual}) = \{ \cal{D}_1,\cal{D}_2, \cal{D}_3 \}$ with $(0)=\cal{D}_3 \subsetneq \Sp_{K(2)}^\omega = \cal{D}_2 \subsetneq \langle S^0/p \rangle =\cal{D}_1$. 
\end{theorem}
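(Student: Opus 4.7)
The plan is to reduce the statement to Chai's Hope at height $n=2$ via the bridge provided by Theorem B. By \Cref{thm:chaiimplieshs}, it suffices to verify \Cref{conj:chai} for $n=2$, after which \Cref{prop:hs_vs_spc} delivers the asserted description of $\Spc(\Sp_{K(2)}^{\dual})$, including the chain of strict inclusions. So the problem becomes purely arithmetic: classify the $\mathbb{G}_2$-invariant radical ideals of $E_0 = W\F_{p^2}\llbracket u_1\rrbracket$.

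Next I would exploit that $E_0$ is a two-dimensional regular local Noetherian domain, hence a unique factorization domain. Its prime spectrum consists of $(0)$, the maximal ideal $I_2 = (p,u_1)$, and height-one primes which, up to units, are generated either by $p$ or by an irreducible Weierstrass polynomial in $u_1$ of positive degree. Since $(0)$, $I_1=(p)$, and $I_2$ are manifestly $\mathbb{G}_2$-invariant, the task reduces to ruling out any other possibilities. Given a $\mathbb{G}_2$-invariant radical ideal $I \neq (0)$, I would write $I = \bigcap_i \mathfrak{p}_i$ as the intersection of its minimal primes; since $\mathbb{G}_2$ permutes the $\mathfrak{p}_i$, if the orbit of some height-one minimal prime $\mathfrak{p}$ were infinite then the intersection over this orbit would already be $(0)$ (because in a Noetherian UFD any nonzero element is divisible by only finitely many pairwise non-associate irreducibles), forcing $I=(0)$, a contradiction. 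Hence every $\mathbb{G}_2$-orbit of a minimal prime of $I$ must be finite.

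The hard part will be to show that the only height-one prime with a finite $\mathbb{G}_2$-orbit is $(p)$ itself, and this is where the Gross--Hopkins period map $\Phi\colon \Spf(E_0)^{\mathrm{rig}}\otimes \mathbb{Q}_{p^2}\to \mathbb{P}^1_{\mathbb{Q}_{p^2}}$ enters decisively. For any height-one prime $\mathfrak{p}=(f)\neq (p)$, Weierstrass preparation exhibits $V(f)^{\mathrm{rig}}$ as a non-empty finite rigid-analytic subspace of $\Spf(E_0)^{\mathrm{rig}}\otimes\mathbb{Q}_{p^2}$, and its image under $\Phi$ is then a non-empty finite subset of $\mathbb{P}^1(\overline{\mathbb{Q}_p})$. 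By $\mathbb{G}_2$-equivariance, a finite orbit of $\mathfrak{p}$ would translate into a finite $\mathbb{G}_2$-invariant subset of $\mathbb{P}^1$. But the $\mathbb{G}_2$-action on $\mathbb{P}^1$ factors through the natural embedding of (the unit group of the maximal order of) the height-two division algebra into $GL_2(\mathbb{Q}_{p^2})$ coming from the splitting over $\mathbb{Q}_{p^2}$, and this image is Zariski-dense in $GL_2$. Since $GL_2$ acts transitively on $\mathbb{P}^1$, no non-empty proper Zariski-closed subset can be invariant, and in particular no non-empty finite subset of $\mathbb{P}^1$ is $\mathbb{G}_2$-invariant. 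This contradiction forces every minimal prime of $I$ to lie in $\{(p), (p,u_1)\}$, yielding $I \in \{(p),(p,u_1)\}$ and completing the classification.
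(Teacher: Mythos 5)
Your proof is correct and follows the same overall strategy as the paper: reduce to Chai's Hope at height two via \Cref{thm:chaiimplieshs} and \Cref{prop:hs_vs_spc}, exploit the UFD structure of $E_0 \cong W\mathbb{F}_{p^2}\llbracket u_1\rrbracket$ to reduce to classifying height-one primes with finite $\mathbb{G}_2$-orbit, and then use the Gross--Hopkins period map to push the question to $\mathbb{P}^1$. The point where you genuinely diverge from the paper is the key density statement on $\mathbb{P}^1$. The paper's \Cref{prop:orbitsize} shows each $\mathbb{S}_2$-orbit on $\mathbb{P}^1(\mathbb{C}_p)$ is at least one-dimensional, either by a Lie-algebra/tangent-space computation (the exact sequence coming from the parabolic, pulled back along the $\mathbb{Q}_p$-form $\Lie(\mathbb{S}_2) \subseteq \mathfrak{gl}_2(\mathbb{C}_p)$) or by exhibiting an explicit one-parameter curve in an orbit and applying the $p$-adic inverse function theorem. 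You instead argue that $\mathbb{S}_2$, being an open compact subgroup of $D^\times(\mathbb{Q}_p)$ with $D^\times$ a connected $\mathbb{Q}_p$-form of $GL_2$, is Zariski-dense in $GL_{2,\overline{\mathbb{Q}}_p}$, so the (Zariski-closed) setwise stabilizer of any non-empty finite subset of $\mathbb{P}^1$ containing $\mathbb{S}_2$ would have to be all of $GL_2$; transitivity of $GL_2$ on $\mathbb{P}^1$ then forces the subset to be infinite. This is a cleaner, more algebro-geometric route that avoids the $p$-adic differential calculus, and it proves the slightly stronger statement that there is no non-empty finite $\mathbb{S}_2$-invariant subset whatsoever. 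It does, however, lean on standard but nontrivial inputs (unirationality of connected reductive groups, hence Zariski-density of rational points over an infinite field; density of open subgroups of $G(\mathbb{Q}_p)$ in a connected $G$) that you state without citation, and you are a little loose in writing $\mathbb{G}_2$ where $\mathbb{S}_2$ is what the period map is equivariant for. Both your argument and the paper's tangent-space argument generalize to arbitrary height for this particular step; the genuine obstruction to higher heights lies, as the paper notes, in the commutative-algebra reduction (height-one primes of $E_0$ being principal is exploited in a way that does not directly handle intermediate-height primes when $n>2$).
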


The key here is the following.

\begin{theorem}\label{thm:invariant_primes}
If the prime $\mathfrak{p}\subseteq E_0\simeq W\mathbb{F}_{p^2}[\![X]\!]$ is associated with a $\mathbb{G}_2$-invariant ideal $I\subseteq W\mathbb{F}_{p^2}[\![X]\!]$,
then $\mathfrak{p}\in\{ I_0=(0), I_1=(p), I_2=(p,X)\}$.
\end{theorem}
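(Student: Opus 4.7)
The plan is to prove the contrapositive for height-one primes: any principal prime $\mathfrak{p}=(g) \subseteq R = W\mathbb{F}_{p^2}[\![X]\!]$ with $g$ not associated to $p$ has infinite $\mathbb{G}_2$-orbit, and hence cannot be associated to any $\mathbb{G}_2$-invariant radical ideal $I$ (the associated primes of such $I$ are its finitely many minimal primes, so every associated prime has finite $\mathbb{G}_2$-orbit). First I would dispose of the easy cases: the only primes of height zero and height two are $(0)$ and the maximal ideal $(p,X)$, which are tautologically $\mathbb{G}_2$-invariant, and $(p)$ is invariant because $\mathbb{G}_2$ acts on $R$ by ring automorphisms fixing the integer $p$. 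Since $R$ is a regular local two-dimensional ring, hence a UFD, every remaining height-one prime is principal, and by Weierstrass preparation one may take the generator to be a Weierstrass polynomial $g \in W\mathbb{F}_{p^2}[X]$ of degree $d \ge 1$.

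The main tool is the Gross--Hopkins period map recalled in the introduction,
\[ \Phi \colon \Spf(R)^{\mathrm{rig}} \otimes_{\mathbb{Q}_{p^2}} \overline{\mathbb{Q}}_p \longrightarrow \mathbb{P}^1(\overline{\mathbb{Q}}_p), \]
defined everywhere on the rigid generic fiber (the open unit disk) and $\mathbb{G}_2$-equivariant, where the action on the target factors through $\Aut(F_2) = \mathcal{O}_D^\times \hookrightarrow D^\times$ via the splitting $D \otimes_{\mathbb{Q}_p} \mathbb{Q}_{p^2} \cong M_2(\mathbb{Q}_{p^2})$ (here $D$ is the quaternion division algebra over $\mathbb{Q}_p$), combined with the natural Galois action. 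If $(g)$ had finite $\mathbb{G}_2$-orbit $\{(g_1),\ldots,(g_k)\}$, then $\bigcup_i V(g_i)$ would be a non-empty finite $\mathbb{G}_2$-invariant subset of the open unit disk, and its image under $\Phi$ would be a non-empty finite $\mathbb{G}_2$-invariant subset of $\mathbb{P}^1(\overline{\mathbb{Q}}_p)$. It therefore suffices to prove that $\mathbb{G}_2$ acts on $\mathbb{P}^1(\overline{\mathbb{Q}}_p)$ without non-empty finite orbits.

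The main step, and what I expect to be the crux, is to establish this no-finite-orbit claim by a dimension argument. Given $z \in \mathbb{P}^1(\overline{\mathbb{Q}}_p)$, its stabilizer in the $\mathbb{Q}_p$-algebraic group $D^\times$ is a proper algebraic subgroup: otherwise $D \otimes_{\mathbb{Q}_p} \overline{\mathbb{Q}}_p \cong M_2(\overline{\mathbb{Q}}_p)$ would be contained in the stabilizer of a line in $\overline{\mathbb{Q}}_p^2$, which is absurd. Since $D$ is a division algebra, $D^\times$ is anisotropic modulo its center, so it contains no non-trivial unipotent subgroups and in particular no Borel subgroup; consequently its connected proper algebraic subgroups have $\mathbb{Q}_p$-dimension at most $2$, being either trivial, the center, or tori of the form $K^\times$ for quadratic extensions $K \hookrightarrow D$. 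Hence $\mathrm{Stab}_{\mathcal{O}_D^\times}(z)$ is a closed subgroup of the compact $4$-dimensional $p$-adic Lie group $\mathcal{O}_D^\times$ of $\mathbb{Q}_p$-Lie dimension at most $2$, and thus of infinite index. Therefore the $\mathcal{O}_D^\times$-orbit, and a fortiori the $\mathbb{G}_2$-orbit, of $z$ is infinite.

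The main technical obstacle lies in executing this last step carefully: pinning down the precise form of the $\mathbb{G}_2$-equivariance of $\Phi$ on the target, verifying the classification of closed algebraic subgroups of the anisotropic $D^\times$, and invoking $p$-adic Lie theory to conclude that open (equivalently, finite-index) closed subgroups of $\mathcal{O}_D^\times$ must have full $\mathbb{Q}_p$-Lie dimension. Given these inputs, the chain of implications reduction-to-$(g)$, transport-via-$\Phi$, and dimension count assembles into the desired classification of invariant primes of $E_0$ at height $n=2$.
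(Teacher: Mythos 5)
Your high-level strategy matches the paper's: both reduce the question via primary decomposition to the behavior of $\mathbb{G}_2$-orbits of height-one (hence principal) primes, push the zero locus of a Weierstrass polynomial through the Gross--Hopkins period map, and then argue that $\mathbb{G}_2$-orbits on $\mathbb{P}^1$ are infinite. The paper packages the step about principal ideals as a separate statement (any invariant principal ideal is $(p^n)$, proved via the period map) and then multiplies the conjugates of a principal associated prime to produce an invariant principal ideal, while you pass directly to the finite union of zero loci; these are equivalent reductions.

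Where the approaches diverge is in how the infinite-orbit claim on $\mathbb{P}^1(\mathbb{C}_p)$ is established, and here your argument contains a genuine inaccuracy. You assert that the connected proper algebraic subgroups of $D^\times$ over $\mathbb{Q}_p$ all have dimension at most $2$, listing only the trivial group, the center, and tori $K^\times$. This classification is false: the reduced-norm-one subgroup $D^{(1)}$ (a $\mathbb{Q}_p$-form of $\mathrm{SL}_2$) is a $3$-dimensional connected proper algebraic subgroup. Your conclusion can still be salvaged because the stabilizer of a point of $\mathbb{P}^1$ is \emph{solvable} (it lies in a Borel after base change), and solvable $\mathbb{Q}_p$-Lie subalgebras of $D$ do have dimension at most $2$ --- a $2$-dimensional solvable subalgebra of the trace-zero part $D^{(0)}$ would base-change to a Borel of $\mathfrak{sl}_2$, whose derived (nilpotent) subalgebra is nonzero and defined over $\mathbb{Q}_p$, contradicting that a division algebra has no nonzero nilpotents. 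So you need to use solvability of the stabilizer, not just properness. There is a further wrinkle you gloss over: $z$ is typically only a $\overline{\mathbb{Q}}_p$-point, so "the stabilizer of $z$ in the $\mathbb{Q}_p$-algebraic group $D^\times$" is not a $\mathbb{Q}_p$-algebraic subgroup on the nose; one has to work at the level of Lie algebras (or tangent spaces) inside $M_2(\mathbb{C}_p)$ with the $\mathbb{Q}_p$-form $D$.

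The paper's first proof of the infinite-orbit claim sidesteps all of this. It never classifies stabilizers: it simply observes that $\Lie(\mathbb{S}_2) \otimes_{\mathbb{Q}_p}\mathbb{C}_p \cong M_2(\mathbb{C}_p)$ surjects onto $T_x(\mathbb{P}^1(\mathbb{C}_p))$ (since $\mathrm{GL}_2$ acts transitively), so the image of the differential of the orbit map is nonzero, and hence the orbit is at least $1$-dimensional. This only requires the stabilizer to have dimension $<4$, which is immediate, and is cleaner than (and robust where) your subgroup classification is not. The paper also offers a second, fully elementary proof by writing down an explicit one-parameter curve in the orbit from the fractional-linear-transformation formula.
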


Before engaging into the proof, we note this implies Chai's Hope in height two.

\begin{corollary}
Every invariant radical ideal of $E_0$ is one of $I_0,I_1$ and $I_2$ (and in particular, is prime).
\end{corollary}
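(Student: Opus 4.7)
The approach is to deduce the corollary directly from the preceding Theorem~\ref{thm:invariant_primes} by invoking primary decomposition. Let $I \subseteq E_0$ be a $\mathbb{G}_2$-invariant radical ideal. Since $E_0 \cong W\mathbb{F}_{p^2}[\![X]\!]$ is Noetherian, $I$ decomposes as the intersection of its associated primes; for a radical ideal these coincide with the minimal primes over $I$, giving
$$ I \;=\; \bigcap_{\mathfrak{p} \in \mathrm{Min}(I)} \mathfrak{p}. $$

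Next, I would observe that each minimal prime over $I$ is an associated prime of the $\mathbb{G}_2$-invariant ideal $I$, and is therefore constrained by Theorem~\ref{thm:invariant_primes} to lie in $\{I_0, I_1, I_2\}$. The three ideals $I_0 \subsetneq I_1 \subsetneq I_2$ form a totally ordered chain in $\mathrm{Spec}(E_0)$, while $\mathrm{Min}(I)$ is by definition an antichain in the prime spectrum. Hence $\mathrm{Min}(I)$ must consist of a single element, and we conclude $I = I_k$ for some $k \in \{0,1,2\}$. In particular each such $I_k$ is a prime ideal, which yields the parenthetical assertion.

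There is no substantive obstacle here once Theorem~\ref{thm:invariant_primes} is available: the corollary is a repackaging via standard Noetherian commutative algebra. The only point demanding a modicum of care is that "associated with" in Theorem~\ref{thm:invariant_primes} is read as "associated prime of" in the usual sense, so that minimal primes over $I$---which are always among the associated primes of $E_0/I$ in a Noetherian ring---are indeed covered by the hypothesis. All the genuine work sits upstream, in the proof of Theorem~\ref{thm:invariant_primes} itself.
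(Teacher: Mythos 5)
Your proof is correct and takes essentially the same route as the paper's: both pass through primary decomposition, observe that the (minimal) associated primes of $I$ are constrained by Theorem~\ref{thm:invariant_primes} to lie in $\{I_0, I_1, I_2\}$, and then use the total ordering of these primes to collapse $\mathrm{Min}(I)$ to a single element, so that $I$ being radical forces $I = I_k$. The only cosmetic difference is that the paper phrases the collapse in terms of the irreducible components of $V(I)$ while you phrase it via $\mathrm{Min}(I)$ being an antichain; these are the same observation.
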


\begin{proof} Let $I\subseteq E_0$ be an invariant radical ideal and $I=\bigcup_i Q_i$ a primary decomposition. This means that $V(I)=\bigcup_i V(\sqrt{Q_i})$ is the decomposition of $V(I)$ into irreducible components. Each $\sqrt{Q_i}$ is a prime associated with the invariant ideal $I$, hence equal to one of the $I_i$ by \cref{thm:invariant_primes}. Since the $I_i$ are linearly ordered
under inclusion, we have $V(I)=V(I_i)$ for some $i$, hence $I=I_i$, because both ideals are radical.
\end{proof}

We denote by $\mathbb{S}_2\subseteq\mathbb{G}_2$ the non-extended stabilizer group, i.e., the units of the maximal order of the unique division algebra
over $\mathbb{Q}_p$ of invariant $1/2$.
Our first step in the proof of \cref{thm:invariant_primes} is the non-triviality statement \Cref{prop:orbitsize} below, about the canonical action of $\mathbb{S}_2$ on
 $\mathbb{P}^1_{\mathbb{Q}_{p^2}}$ through (Galois-twisted) fractional linear transformations. We recall the formula for this action below in \Cref{eq:explicit_action}. We give two related arguments
for this. The first one freely uses basic results about $p$-adic manifolds, for which we refer to \cite[Part II, Chapter III]{serre_lie}.
This argument generalizes to arbitrary heights. The second argument makes explicit the $p$-adic exponential map involved in this theory, and hence reduces the argument to an 
elementary manipulation of power series.\\
The group $\mathbb{S}_2$ is a compact $p$-adic Lie-group of dimension $4$ 
which admits a faithful representation $\mathbb{S}_2\subseteq \mathrm{Gl}_2(W\mathbb{F}_{p^2})$ through which it acts (by right multiplication) on $\mathbb{P}^1_{\mathbb{Q}_ {p^2}}$.
In particular, $\mathbb{S}_2$ is a $\mathbb{Q}_p$-manifold of dimension $4$. The rigid-analytic space $\mathfrak{X} \coloneqq \Spf(E_0)^{\mathop{rig}}$ is the open unit-disc over $\mathbb{Q}_{p^2}$ and supports the $\mathbb{S}_2$-equivariant period map
\[ 
\Phi \colon \mathfrak{X}\longrightarrow \mathbb{P}^1_{\mathbb{Q}_ {p^2}}.
\]
See \cite{gross_hopkins}, or \cite[Section 1]{kohlhaase} for a succinct summary of this. We denote by $\mathbb{C}_p$ the completion of an algebraic closure of $\mathbb{Q}_p$. This is a complete ultrametric and algebraically closed field. Fix a point $x\in\mathbb{P}^1(\mathbb{C}_p)$. The stabilizer of $x$ is a closed subgroup $U\subseteq\mathbb{S}_2$, hence the orbit  $x\mathbb{S}_2\simeq U\setminus \mathbb{S}_2$ has uniquely the structure of a $\mathbb{Q}_p$-manifold such that the map $\mathbb{S}_2\longrightarrow x\mathbb{S}_2$, $g\mapsto xg$ is a submersion. We now observe that the orbit is not finite, i.e., not of dimension $0$.

\begin{proposition}\label{prop:orbitsize}
In the above situation, we have $dim_x(x\mathbb{S}_2)\ge 1$.
\end{proposition}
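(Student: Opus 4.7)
I would argue by contradiction: assume $\dim_x(x\mathbb{S}_2) = 0$. Since the orbit map $\mathbb{S}_2\to x\mathbb{S}_2,\,g\mapsto xg$ is a submersion of $p$-adic manifolds, the stabilizer $U$ of $x$ has dimension $\dim \mathbb{S}_2 - \dim_x(x\mathbb{S}_2) = 4 = \dim \mathbb{S}_2$, so $U$ is open in $\mathbb{S}_2$. In particular, $U$ and $\mathbb{S}_2$ share the same Lie algebra $\mathfrak{g}$, which is canonically identified with the underlying $\mathbb{Q}_p$-vector space of the division algebra $D$ of invariant $1/2$.

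Via the $p$-adic exponential, openness of $U$ implies $\exp(t\xi)\cdot x = x$ for every $\xi\in\mathfrak{g}$ and every sufficiently small $t\in\mathbb{Q}_p$. Differentiating at $t=0$, and using that the action of $\mathrm{GL}_2(W\mathbb{F}_{p^2})$ on $\mathbb{P}^1_{\mathbb{Q}_{p^2}}$ is $p$-adic analytic, one concludes that the induced linear map $\rho_x\colon\mathfrak{g}\to T_x\mathbb{P}^1(\mathbb{C}_p)$, inherited from the infinitesimal M\"obius action of $\mathfrak{gl}_2$, vanishes identically.

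The contradiction then comes from the invariant $1/2$ condition on $D$. The splitting $D\otimes_{\mathbb{Q}_p}\mathbb{Q}_{p^2}\simeq M_2(\mathbb{Q}_{p^2})$, together with faithfulness of the representation $\mathbb{S}_2\subseteq\mathrm{GL}_2(W\mathbb{F}_{p^2})$ and a $\mathbb{Q}_{p^2}$-dimension count on the base-changed map, shows that the $\mathbb{Q}_{p^2}$-span of $\mathfrak{g}$ inside $\mathfrak{gl}_2(\mathbb{Q}_{p^2})$ is all of $\mathfrak{gl}_2(\mathbb{Q}_{p^2})$; after further tensoring with $\mathbb{C}_p$, the image of $\mathfrak{g}$ still $\mathbb{C}_p$-spans $\mathfrak{gl}_2(\mathbb{C}_p)$. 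But $\mathrm{PGL}_2$ acts transitively on $\mathbb{P}^1$, so the infinitesimal action $\mathfrak{gl}_2(\mathbb{C}_p)\to T_x\mathbb{P}^1(\mathbb{C}_p)$ is $\mathbb{C}_p$-linearly surjective; the vanishing of $\rho_x$ on the spanning subset $\mathfrak{g}$ would then force this surjection to be zero, an absurdity.

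The main subtlety I anticipate is the clean dictionary between the $p$-adic analytic orbit map on the rigid space side and the algebraic $\mathfrak{gl}_2$-action by M\"obius transformations on the other: one must verify that the two notions of derivative genuinely agree. This is standard from general $p$-adic Lie theory \cite{serre_lie} and is what should allow the argument to generalize to arbitrary heights. A self-contained backup would write the exponential explicitly as the convergent matrix power series $\exp(t\xi) = 1 + t\xi + \tfrac{t^2}{2}\xi^2 + \cdots \in \mathrm{GL}_2(W\mathbb{F}_{p^2})$ for $\xi\in p^N\mathcal{O}_D$ with $N$ large enough for convergence, and check by direct substitution into the M\"obius formula that $t\mapsto x\cdot\exp(t\xi)$ is nonconstant for some $\xi$ whose image in $\mathfrak{gl}_2(\mathbb{C}_p)$ does not stabilize $x$---and the existence of such $\xi$ is again guaranteed by the span argument above.
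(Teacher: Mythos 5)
Your proposal is correct and takes essentially the same approach as the paper's first proof: both arguments rest on the two facts that $\Lie(\mathbb{S}_2)$ is a $\mathbb{Q}_p$-form of $\mathfrak{gl}_2$ (via the splitting of the division algebra over $\mathbb{Q}_{p^2}\subseteq\mathbb{C}_p$) and that the infinitesimal $\mathfrak{gl}_2(\mathbb{C}_p)$-action on $T_x\mathbb{P}^1(\mathbb{C}_p)$ is surjective because $\mathbb{P}^1$ is a homogeneous space. The paper phrases this as a direct tangent-space diagram chase exhibiting $T_x(x\mathbb{S}_2)\otimes_{\mathbb{Q}_p}\mathbb{C}_p\twoheadrightarrow T_x\mathbb{P}^1(\mathbb{C}_p)$, while you argue contrapositively that a zero-dimensional orbit would force the infinitesimal action to vanish on a $\mathbb{C}_p$-spanning set, but the underlying content is identical.
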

\begin{proof}[First proof of \Cref{prop:orbitsize}]
This will be a computation of tangent spaces. We denote by $\mathcal{O}_{\mathbb{C}_p}\subseteq\mathbb{C}_p$ the ring of integers and recall the representation of the projective line
{\em over } $\mathbb{C}_p$ 
 \[ 
 \mathrm{P}(\mathcal{O}_{\mathbb{C}_p})\setminus \mathrm{Gl}_2(\mathcal{O}_{\mathbb{C}_p})\simeq
\mathbb{P}^1(\mathbb{C}_p)\, , \, \mathrm{P}(\mathcal{O}_{\mathbb{C}_p})g\mapsto [1:0]g
\]
as a principal homogeneous space. Here,
\[
P:=\left\{\begin{pmatrix}
* & 0 \\
* & *
\end{pmatrix}\right\} \subseteq \mathrm{Gl}_2
\]
denotes the corresponding parabolic subgroup of $\mathrm{Gl}_2$.\\
Choose some $g\in  \mathrm{Gl}_2(\mathcal{O}_{\mathbb{C}_p})$ such that $x=[1:0]g$. We will compute 
the tangent space at $x\in x\mathbb{G}_2$ as a $\mathbb{Q}_p$-linear subspace  
\[
T_x(x\mathbb{G}_2)\subseteq T_x(x\mathrm{Gl}_2(\mathcal{O}_{\mathbb{C}_p})=\mathbb{P}^1(\mathbb{C}_p))
\]
inside this 1-dimensional $\mathbb{C}_p$-vector space.
Since the stabilizer of $x$ is $g^{-1}\mathrm{P}(\mathcal{O}_{\mathbb{C}_p})g$ we first obtain an exact sequence of $\mathbb{C}_p$-vector spaces
\begin{equation}\label{eq:1}
\xymatrix{ 0 \ar[r] & \Lie(g^{-1}\mathrm{P}(\mathcal{O}_{\mathbb{C}_p})g)  \ar[r] & \Lie(\mathrm{Gl}_2(\mathcal{O}_{\mathbb{C}_p}))=\mathrm{M}_2(\mathbb{C}_p) \ar[r] & T_x(\mathbb{P}^1(\mathbb{C}_p)) \ar[r] & 0,}
\end{equation}
using the fact that submersions induce surjective maps on tangent spaces.
To obtain a $\mathbb{Q}_p$-rational result from this, we recall that $i\colon \Lie(\mathbb{S}_2)\subseteq \Lie(\mathrm{Gl}_2(\mathcal{O}_{\mathbb{C}_p}))$ is a $\mathbb{Q}_p$-form, i.e., the $\mathbb{C}_p$-linear extension of $i$ is an isomorphism.
Furthermore, the stabilizer of $x$ in $\mathbb{S}_2$ is $\mathbb{S}_2\cap g^{-1}\mathrm{P}(\mathcal{O}_{\mathbb{C}_p})g$, and similarly for Lie-algebras. Thus, pulling back \eqref{eq:1} along $i$ and extending 
$\mathbb{C}_p$-linearly yields a commutative diagram with exact rows
\[
\xymatrix@C-=0.4cm{ 0 \ar[r] & \Lie(g^{-1}\mathrm{P}(\mathcal{O}_{\mathbb{C}_p})g)  \ar[r] & \Lie(\mathrm{Gl}_2(\mathcal{O}_{\mathbb{C}_p}))=\mathrm{M}_2(\mathbb{C}_p) \ar[r] & T_x(\mathbb{P}^1(\mathbb{C}_p)) \ar[r] & 0\\ 
0 \ar[r] & \ar[u]^\mu  \Lie(\mathbb{S}_2\cap g^{-1}\mathrm{P}(\mathcal{O}_{\mathbb{C}_p})g)\otimes_{\mathbb{Q}_p}\mathbb{C}_p\ar[r] & \Lie(\mathbb{S}_2)\otimes_{\mathbb{Q}_p}\mathbb{C}_p
\ar[r]\ar[u]^{\simeq} & T_x(x\mathbb{S}_2)\otimes_{\mathbb{Q}_p}\mathbb{C}_p \ar[u]^\iota \ar[r] & 0.  }                     
\]
This shows that $\iota$ is surjective and implies the claim, namely that $T_x(x\mathbb{S}_2)\neq 0$.
For clarity, observe that $\iota$ will not in general be injective, but rather the snake lemma identifies the kernel of $\iota$ with the cokernel of $\mu$, the size of which measures the irrationality of $x$ (or rather its representative $g$).
\end{proof}

\begin{proof}[Alternate argument concerning \Cref{prop:orbitsize}]
Here we only prove that the orbit $x\mathbb{S}_2$ is infinite, which will suffice for the application. We leave the case $x=[1:0]$ to the reader and hence assume $x=[\mu:1]$ for some $\mu\in\mathbb{C}_p$.
Every $\gamma\in \mathbb{S}_2$ can be written uniquely as $\gamma=\alpha_0+\alpha_1\Pi$ with $\alpha_0\in W\mathbb{F}_{p^2}^*,\alpha_1\in W\mathbb{F}_{p^2}$
(and $\Pi$ a uniformizer of the division-algebra of invariant $1/2$ over $\mathbb{Q}_p$). The action of such an
element on $x$ is given by
\begin{equation}\label{eq:explicit_action} 
\gamma\cdot x=\left[\frac{\alpha_0\mu+\overline{\alpha}_1}{\overline{\alpha}_0+p\mu\alpha_1}:1\right], 
\end{equation}
assuming the displayed denominator is non-zero, see \cite[equation (25.13)]{gross_hopkins}. Hence, our claim is that the subset
\[ 
\Sigma:=\left\{ \frac{\alpha_0\mu+\overline{\alpha}_1}{\overline{\alpha}_0+p\mu\alpha_1}\,\mid\, \alpha_0\in W\mathbb{F}_{p^2}^*,\alpha_1\in W\mathbb{F}_{p^2}\right\}\subseteq\mathbb{C}_p
\]
is infinite. (Here, an overbar denotes the Galois automorphism).
Note that for $(\alpha_0,\alpha_1)=(1,0)$, the fraction takes the value $\mu$. We now write down a one-parameter
curve through $\mu$ inside $\Sigma$ by choosing $(\alpha_0,\alpha_1)=(1,z)$, namely
\[ 
f_\mu \colon \mathbb{Z}_p\longrightarrow\Sigma,\,\, f_\mu(z):=\frac{\mu+z}{1+p \mu z}
\]
(or rather the restriction to some neighborhood of $0\in\mathbb{Z}_p$ on which the denominator does not vanish,
we leave this detail to the reader).
We then compute $f_\mu'(0)=1-p\mu^2$ and leaving the case $\mu^2=-1/p$ to the reader, can assume that
$f_\mu'(0)\neq 0$. Then the inverse function theorem \cite[Part II, Chapter III, 9]{serre_lie} implies in particular that
there is an open neighborhood $0\in U=p^N\mathbb{Z}_p$ such that $f_\mu$ restricted to $U$ is injective.
Hence, $\Sigma$ is (uncountably) infinite.
\end{proof}

We now get a first consequence for $\mathbb{G}_2$-invariant ideals of $E_0$.

\begin{proposition}\label{prop:invariant_principal}
If $(0)\neq (F)\subseteq E_0\cong W\mathbb{F}_{p^2}[\![X]\!]$ is a $\mathbb{G}_2$-invariant principal ideal, then there is an $n\ge 1$ such that $(F)=(p^n)$.
\end{proposition}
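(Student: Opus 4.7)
The plan is to apply Weierstrass preparation to reduce the question to one about a finite set of zeros on the rigid generic fiber, and then invoke the $\mathbb{S}_2$-equivariance of the Gross--Hopkins period map together with \Cref{prop:orbitsize} to rule out any nontrivial Weierstrass polynomial part. Concretely, given a nonzero $F \in E_0 \cong W\mathbb{F}_{p^2}[\![X]\!]$, the Weierstrass preparation theorem produces a factorization $F = u \cdot p^n \cdot G$ with $u \in E_0^{\times}$, $n \geq 0$, and $G \in W\mathbb{F}_{p^2}[X]$ a distinguished polynomial of some degree $d \geq 0$. If $d = 0$, then $G = 1$, whence $(F) = (p^n)$, and the (implicit) hypothesis that $(F)$ is a proper ideal then forces $n \geq 1$. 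So it suffices to rule out $d \geq 1$.

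To do this, I pass to the rigid generic fiber $\mathfrak{X} = \Spf(E_0)^{\mathrm{rig}}$, the open unit disc over $\mathbb{Q}_{p^2}$, on which $p$ is invertible. Consequently, the vanishing locus of $F$ on $\mathfrak{X}(\mathbb{C}_p)$ coincides with that of $G$. Since $G$ is distinguished, its Newton polygon has only positive slopes, so all $d$ of its roots (counted with multiplicity) lie in the open unit disc; hence if $d \geq 1$, the zero set $Z := V(G)(\mathbb{C}_p)$ is a nonempty finite subset of $\mathfrak{X}(\mathbb{C}_p)$. Invariance of the ideal $(F)$ under $\mathbb{G}_2$ means that for each $\gamma \in \mathbb{G}_2$ we have $\gamma F = u_\gamma F$ for some unit $u_\gamma \in E_0^{\times}$, and in particular $Z$ is stable under the $\mathbb{G}_2$-action on $\mathfrak{X}$, hence under the subgroup $\mathbb{S}_2$.

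Now apply the $\mathbb{S}_2$-equivariant Gross--Hopkins period map $\Phi \colon \mathfrak{X} \to \mathbb{P}^1_{\mathbb{Q}_{p^2}}$. The image $\Phi(Z) \subseteq \mathbb{P}^1(\mathbb{C}_p)$ is a nonempty finite $\mathbb{S}_2$-invariant set, so any $x \in \Phi(Z)$ has finite $\mathbb{S}_2$-orbit, contradicting $\dim_x(x\mathbb{S}_2) \geq 1$ from \Cref{prop:orbitsize}. Hence $d = 0$, and we conclude.

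The point that requires the most care is the rigid-analytic bookkeeping: verifying that the Weierstrass decomposition cleanly separates the $p$-part from the finite-root part on $\mathfrak{X}$, and that the algebraic $\mathbb{G}_2$-action on $E_0$ really restricts to the $\mathbb{S}_2$-action on $\mathfrak{X}(\mathbb{C}_p)$ appearing in \Cref{prop:orbitsize}. These compatibilities are standard in the Lubin--Tate setup, but they are what allow the geometric input of \Cref{prop:orbitsize} to be transported back into a statement about principal ideals in $E_0$.
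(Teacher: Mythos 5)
Your argument is correct and follows the paper's proof essentially step for step: Weierstrass preparation to isolate a distinguished polynomial factor, observe that its (finitely many) roots lie in the open unit disc and form a $\mathbb{G}_2$-invariant (hence $\mathbb{S}_2$-invariant) subset, and then use the equivariance of the Gross--Hopkins period map together with \Cref{prop:orbitsize} to see that any such nonempty invariant set would have to be infinite, a contradiction. The only cosmetic difference is that the paper fixes a single root and tracks its orbit, whereas you work with the whole zero set; your remark that $(F)$ should be read as a proper ideal to obtain $n\geq 1$ is a small point the paper leaves implicit as well.
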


\begin{proof}
By the $p$-adic Weierstrass preparation theorem \cite[Theorem 7.3]{washington}, we can (uniquely) write $F=p^nPU$ in $W\mathbb{F}_{p^2}[\![X]\!]$ with $P$ a distinguished polynomial, $U$ a unit and $n\ge 0$. Thus, the claim is that $P$ is constant.
If not, it admits a zero in $\overline{\mathbb{Q}}_p$ which, since $P$ is distinguished, must be in the maximal ideal of $\overline{\mathbb{Z}}_p$, i.e., $x$ is a point in the open unit disc $\mathfrak{X}$. Since $(F)$ is invariant, $F$ vanishes at every point of the orbit $x\mathbb{G}_2\subseteq \mathfrak{X}$. This orbit is infinite, because its image under $\Phi$ is, by \Cref{prop:orbitsize} (and recalling that $\mathbb{G}_2\supseteq\mathbb{S}_2$). Thus, the polynomial $P$ has infinitely many zeros and $P=0$, a contradiction (since $F\neq 0$).
\end{proof}

With these preparations, we can determine the invariant radical ideals in height $2$.

\begin{proof}[Proof of \Cref{thm:invariant_primes}]
We can assume that $I\neq (0),(1)$. We freely use some facts about primary decomposition of ideals
in Noetherian rings, see for example \cite[Section 6]{matsumura}.
The set of primes associated with $I$ is a finite set of primes
$\Ass(I)=\{ \mathfrak{p}=\mathfrak{p}_1,\ldots, \mathfrak{p}_n\}$. The action of $\mathbb{G}_2$ permutes $\Ass(I)$ because if
$I=\cap_{i=1}^n\mathfrak{q}_i$ is a minimal primary decomposition with $\mathfrak{p}_i=\sqrt{\mathfrak{q}_i}$, then so is
$I=g(I)=\cap_{i=1}^n g(\mathfrak{q}_i)$ for every $g\in \mathbb{G}_2$. Since all prime ideals of $E_0\cong W\mathbb{F}_{p^2}[\![X]\!]$ except $(p,X)$ are principal (as the ring is a UFD, and so all prime ideals of height 1 are principal, see \cite[Theorem 20.1]{matsumura}), 
we need to show that if
$\mathfrak{p}_1=(F)\neq (0)$ is principal, then $\mathfrak{p}_1=(p)$. The (finite) product of all $\mathbb{G}_2$-conjugates of $\mathfrak{p}_1$
is a non-zero $\mathbb{G}_2$-invariant principal ideal, say equal to $(H)\neq (0)$. Now \Cref{prop:invariant_principal} implies that $(H)=(p^n)$ for some $n\ge 1$.
Since the irreducible element $F\in E_0\cong W\mathbb{F}_{p^2}[\![X]\!]$ divides $H$, we conclude that $(F)=(p)$.
\end{proof}

\section{Descent for Balmer spectra}\label{sec:descent}

In this section we investigate descent properties of the Balmer spectrum, showing that an overly optimistic generalization of \cite[Theorem 1.3]{Balmer2016Separable} does not hold. To provide some context for our result, we observe that if $A\longrightarrow B$ is a faithfully flat map of (discrete) commutative rings, then 
the canonical diagram of Zariski spectra
\[ 
\xymatrix{\Spec(B\otimes_A B) \ar@<0.5ex>[r] \ar@<-0.5ex>[r] & \Spec(B) \ar[r] & \Spec(A)}
\]
is a coequalizer of topological spaces: This follows from $\Spec(B)\longrightarrow
\Spec(A)$ being a topological quotient map (\cite[Expose VIII, Corollaire 4.3]{SGA1} and \cite[EGA IV, Corollaire 2.3.12]{EGA4}, see also 
\cite[\href{https://stacks.math.columbia.edu/tag/02JY}{Tag 02JY}]{stacks-project}) and the more 
elementary fact that the canonical map $\Spec(B\otimes_A B)\longrightarrow\Spec(B)\times_{\Spec(A)}\Spec(B)$ is surjective.\footnote{Beware that this fiber product is formed in topological spaces, not in (affine) schemes.}
This result admits an equivalent reformulation in categorical terms: Since there is a functorial
homeomorphism $\Spec(A)\simeq\Spc(\Mod_A^\omega)$ \cite[Theorem 3.15]{thomason_classification} and an equivalence of symmetric monoidal $\infty$-categories \cite[Theorem 6.1]{lurie_dag8}
\[
\Mod_A^\omega\simeq\Tot \Mod^\omega_{B^{\otimes_A \bullet+1}},
\]
we see that applying $\Spc(-)$ to this limit diagram yields a colimit diagram
in low degrees (note that $B$ is a flat $A$-module, so the tensor products $B^{\otimes_A \bullet+1}$ are discrete commutative rings to which Thomason's result applies). The task of finding the correct generality of this categorical result was taken
up by Balmer, whose \cite[Theorem 1.3]{Balmer2016Separable} roughly implies that if $A\in\Calg(\cal{C})$ is a descendable commutative algebra object in a small symmetric monoidal $\infty$-category $\cal{C}$ (that is, the thick tensor-ideal generated by $A$ is all of $\cal{C}$), then there is a coequalizer diagram of topological spaces
\[ 
\xymatrix{\Spc(\Mod_{A\otimes A}(\cal{C})) \ar@<0.5ex>[r] \ar@<-0.5ex>[r] & \Spc(\Mod_A(\cal{C})) \ar[r] & \Spc(\cal{C}).}
\]
This covers for example the case of a finite, faithful Galois extension of $\mathbb{E}_\infty$-rings, but falls short of reproducing the classical result: There are many faithfully flat
maps $A\to B$ of (discrete) commutative rings, which do not turn $B$ into a perfect (complex
of) $A$-module(s), hence Balmer's result cannot be applied to this situation in general.
With the classical result in mind, one might hope that it is possible to weaken the finiteness assumptions in Balmer's
result, but we construct an example which seems to caution this, see
\cref{prop:nodescent}.

\subsection{Categorical descent for dualizable \texorpdfstring{$K(n)$}{K(n)}-local spectra}\label{sec:descentkn}

We remind the reader that we let $E = E_n$ be Morava $E$-theory of height $n$ at a fixed prime $p$. Viewed as a commutative ring spectrum in the $K(n)$-local category $\Sp_{K(n)}$, we denote the associated $K(n)$-local Amitsur complex by $E^{\htimes \bullet +1}\colon \Delta \to \CAlg(\Sp_{K(n)})$. Passing to categories of modules internal to $\Sp_{K(n)}$ (see \cite[Section 4.5]{lurie-higher-algebra}) thus induces a coaugmented cosimplicial diagram 
\begin{equation}\label{eq:dualizabledescent}
    \Sp_{K(n)} \to \Mod_{E^{\htimes \bullet +1}}(\Sp_{K(n)})
\end{equation}
in the $\infty$-category of compactly generated symmetric monoidal $\infty$-categories. 

\begin{proposition}\label{prop:dualizabledescent}
The coaugmention in the above diagram \eqref{eq:dualizabledescent} induces an equivalence $\Sp_{K(n)}^{\dual} \simeq \Tot\Mod_{E^{\htimes \bullet +1}}^{\omega}$ of symmetric monoidal $\infty$-categories. 
\end{proposition}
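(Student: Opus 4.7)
The plan is to combine categorical descent for the unit along $E$ with the fact that passing to dualizable objects commutes with limits of symmetric monoidal $\infty$-categories.

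\textbf{Step 1 (Descent for the ambient category).} First, I would upgrade the descent statement for $E$ to the internal cosimplicial picture. By the Hopkins--Ravenel smash product theorem, the unit $L_{K(n)}S^0$ lies in the thick tensor-ideal generated by $E \in \mathrm{CAlg}(\Sp_{K(n)})$; equivalently, $E$ is descendable in the sense of Mathew, as already invoked in the proof of \cref{prop:tensor_nilpotence}. The symmetric monoidal form of Barr--Beck--Lurie for descendable algebras (see \cite{mathew_galois}) then promotes \eqref{eq:dualizabledescent} to a limit diagram
\[
\Sp_{K(n)} \xrightarrow{\sim} \Tot \Mod_{E^{\htimes \bullet+1}}(\Sp_{K(n)})
\]
in $\mathrm{CAlg}(\mathrm{Pr}^{L}_{\mathrm{st}})$.

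\textbf{Step 2 (Dualizable objects preserve limits).} Next, I would apply the functor $\cC \mapsto \cC^{\dual}$ to this limit. An object in a limit of small symmetric monoidal $\infty$-categories is dualizable if and only if its image in each term of the diagram is dualizable: dualizability is encoded by the existence of coevaluation and evaluation maps satisfying the triangle identities, which are detected and preserved by the strong symmetric monoidal structure maps of a limit cone. Hence $(-)^{\dual}$ preserves limits, and we obtain a symmetric monoidal equivalence
\[
\Sp_{K(n)}^{\dual} \xrightarrow{\sim} \Tot \left(\Mod_{E^{\htimes \bullet+1}}(\Sp_{K(n)})\right)^{\dual}.
\]

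\textbf{Step 3 (Identifying the terms).} Finally, I would identify each stage of the totalization. The case $k=0$ is \cref{prop:dual_perfect}. For $k \geq 1$, I would check that the same argument runs verbatim with $E$ replaced by $R := E^{\htimes k+1} \in \mathrm{CAlg}(\Sp_{K(n)})$: the inclusion $\Mod_R^\omega \hookrightarrow \Mod_R(\Sp_{K(n)})^{\dual}$ holds since compact $R$-modules are retracts of finite cell $R$-modules, hence $K(n)$-local (as $R$ is), and retain their duals after $K(n)$-localization; the converse uses that $\Mod_R(\Sp_{K(n)})$ is compactly generated by $R$ and that the symmetric monoidal $L_{K(n)}(-\otimes_R -)$ detects dualizability in the expected way. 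This yields $\Mod_{E^{\htimes k+1}}^{\omega} \xrightarrow{\sim} \Mod_{E^{\htimes k+1}}(\Sp_{K(n)})^{\dual}$ for all $k \ge 0$, completing the proof.

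\textbf{Main obstacle.} The only real technical point is the generalization of Mathew's identification in Step 3 from $E$ to the higher tensor powers $E^{\htimes k+1}$; while $E^{\htimes k+1}$ is no longer even-periodic or Noetherian in its homotopy, the argument only requires it to be a $K(n)$-local commutative algebra whose module category has the usual compactness/dualizability correspondence. The other two steps are formal once descendability is in hand.
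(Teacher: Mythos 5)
Your Steps 1 and 2 match the paper: use descendability of $E$ (the proof of \cref{prop:tensor_nilpotence}) to obtain the limit diagram $\Sp_{K(n)} \simeq \Tot \Mod_{E^{\htimes \bullet+1}}(\Sp_{K(n)})$, then pass to dualizable objects, commuting $(-)^{\dual}$ with the limit via \cite[Proposition 4.6.1.11]{lurie-higher-algebra}.

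The gap is in Step 3, and you flag it yourself but do not actually close it. You claim that Mathew's identification $\Mod_R^\omega \xrightarrow{\sim} \Mod_R(\Sp_{K(n)})^{\dual}$ "runs verbatim" for $R = E^{\htimes k+1}$ with $k \ge 1$, but Mathew's argument in \cite[Proposition 10.11]{mathew_galois} crucially uses that $\pi_*E$ is a regular Noetherian local ring: one shows a dualizable $K(n)$-local $E$-module has finitely generated homotopy, and then regularity is what lets one conclude it is perfect. For $k \ge 1$, $\pi_0 E^{\htimes k+1} \cong A_0^{\htimes k}$ is (by \cref{thm:coherenthighercoop}) coherent but explicitly \emph{not} Noetherian, so that argument breaks. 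Your remark that "the argument only requires $R$ to be a $K(n)$-local commutative algebra whose module category has the usual compactness/dualizability correspondence" is circular: that correspondence is exactly what Step 3 must establish.

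The paper's proof is designed to avoid precisely this. Instead of proving the levelwise equivalence $\Mod_{E^{\htimes k+1}}^{\omega} \simeq \Mod_{E^{\htimes k+1}}(\Sp_{K(n)})^{\dual}$ for every $k$, it considers the natural transformation of cosimplicial diagrams
\[
\phi^{\bullet+1}\colon \Mod_{E^{\htimes \bullet +1}}^{\omega} \to \Mod_{E^{\htimes \bullet +1}}^{\dual}(\Sp_{K(n)}),
\]
observes that $\phi^{\bullet+1}$ is fully faithful in every cosimplicial degree (this is easy: perfect modules are always dualizable), and that $\phi^0$ is an equivalence by Mathew's result at level zero. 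It then invokes \cite[Lemma 5.16]{bss_ultra1}, which says a degreewise fully faithful map of cosimplicial $\infty$-categories that is essentially surjective at the bottom level induces an equivalence on totalizations. This neatly sidesteps the need to understand dualizable modules over $E^{\htimes k+1}$ for $k \ge 1$. You should either adopt this strategy, or supply a genuinely new argument that dualizable $K(n)$-local $E^{\htimes k+1}$-modules are perfect — but the paper's authors deliberately did not attempt the latter, and it is not clear it is true.
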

\begin{proof}
By \cref{prop:tensor_nilpotence} (see also \cite[Proposition 10.10]{mathew_galois}),  \eqref{eq:dualizabledescent} is a limit diagram. Since this diagram is constructed in the $\infty$-category of symmetric monoidal $\infty$-categories, we obtain a symmetric monoidal equivalence
\[
\xymatrix{\Sp_{K(n)}^{\dual} \ar[r]^-{\sim} & \Tot\Mod_{E^{\htimes \bullet +1}}^{\dual}(\Sp_{K(n)})}
\]
of the full subcategories of dualizable objects, using \cite[Proposition 4.6.1.11]{lurie-higher-algebra} to commute 
the formation of duals with the limit.

In order to prove the claim, consider the restriction of the right-hand side of \eqref{eq:dualizabledescent} to perfect objects. This gives rise to a natural transformation of cosimplicial diagrams
\[
\phi^{\bullet+1}\colon \Mod_{E^{\htimes \bullet +1}}^{\omega} \to \Mod_{E^{\htimes \bullet +1}}^{\dual}(\Sp_{K(n)}).
\]
The transformation $\phi^{\bullet+1}$ is fully faithful in each degree; moreover, \cref{prop:dual_perfect} shows that $\phi^0$ is also essentially surjective. It thus follows that $\Tot(\phi^{\bullet+1})$ is an equivalence, see for example \cite[Lemma 5.16]{bss_ultra1}. Consequently, there is an equivalence of symmetric monoidal $\infty$-categories
\[
\Sp_{K(n)}^{\dual} \simeq \Tot\Mod_{E^{\htimes \bullet +1}}^{\dual}(\Sp_{K(n)}) \simeq \Tot\Mod_{E^{\htimes \bullet +1}}^{\omega}
\]
as desired.
\end{proof}

\subsection{Coherence of \texorpdfstring{$E$}{E}-theory cooperations}\label{sec:cohcoop}

It follows from \cite[Theorem 2.23]{hoveystrickland_memoir} that the ring of operations $E^*E$ for Morava $E$-theory is left Noetherian. In contrast, it is known that the graded commutative ring of completed cooperations $E_*^{\vee}E$ is not Noetherian for $n>0$. Indeed, this follows from $E_*^{\vee}E/I_n \cong K_*E$ and the known description of the latter \cite[Section 2.3]{hoveystrickland_memoir}. We will, however, show in this section that $E_*^{\vee}E$ is in fact graded coherent. We start with two general results about coherent rings. 

\begin{lemma}(\cite[Theorem 2.3.3]{glaz_coherentrings})\label{lem:coherencecriterion1}
Suppose $(R_i)_{i\in I}$ is a directed diagram of commutative rings satisfying the following two conditions:
    \begin{enumerate}
        \item The rings $R_i$ are coherent for all $i \in I$.
        \item For all $i \to j$ in $I$, the ring homomorphism $R_i \to R_j$ is flat. 
    \end{enumerate}
Then $R = \colim_{I}R_i$ is coherent. 
\end{lemma}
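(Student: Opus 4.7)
The plan is to verify the definitional criterion for coherence of $R$ directly: every finitely generated ideal $J\subseteq R$ must be finitely presented. So fix $J=(a_1,\ldots,a_n)\subseteq R$ and let $\pi\colon R^n\twoheadrightarrow J$ be the presentation sending $e_k\mapsto a_k$. The goal is to prove $K:=\ker(\pi)$ is a finitely generated $R$-module.

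The first step is descent of the generating data to a finite stage. Since $I$ is directed and there are only finitely many generators, I would choose an index $i\in I$ and elements $b_k\in R_i$ lifting $a_k$ along the colimit map $R_i\to R$. Set $J_i:=(b_1,\ldots,b_n)\subseteq R_i$ and let $K_i$ be the kernel of the corresponding surjection $\pi_i\colon R_i^n\twoheadrightarrow J_i$. Coherence of $R_i$ ensures that $K_i$ is a finitely generated $R_i$-module; this is the only place where hypothesis (1) is used.

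The second step uses hypothesis (2) to promote this to all later stages and then passes to the colimit. For each morphism $i\to j$ in $I$, flatness of $R_i\to R_j$ means that applying $-\otimes_{R_i}R_j$ to the short exact sequence $0\to K_i\to R_i^n\to J_i\to 0$ keeps it exact, so $K_i\otimes_{R_i}R_j$ is the kernel of $R_j^n\twoheadrightarrow J_i\otimes_{R_i}R_j$; flatness also identifies $J_i\otimes_{R_i}R_j$ with the extended ideal $J_iR_j\subseteq R_j$. Now I would pass to the filtered colimit over the cofinal system of indices $j\ge i$. Using exactness of filtered colimits of modules and the fact that $\colim_{j\ge i}R_j=R$, one obtains $K\cong K_i\otimes_{R_i}R$, which is finitely generated as an $R$-module because $K_i$ is finitely generated over $R_i$.

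The argument is essentially formal once this descend-then-basechange strategy is in hand, and I do not expect any serious obstacle. The only point requiring care is the double use of flatness: once to commute the kernel with $-\otimes_{R_i}R_j$, and once to identify the extended ideal $J_i\otimes_{R_i}R_j$ with a genuine submodule of $R_j$. Without flatness both identifications can fail, so dropping hypothesis (2) would force an entirely different approach.
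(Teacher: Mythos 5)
The paper does not prove this lemma; it is cited directly from Glaz's book without an argument, so there is no in-text proof to compare against. Your proposed proof is correct and is the standard argument (essentially what Glaz gives): descend the finitely many generators to a single stage $R_i$, use coherence of $R_i$ to get a finitely generated kernel $K_i$, base-change along the flat maps $R_i \to R_j$ to keep the short exact sequence and identify $K_i\otimes_{R_i}R_j$ with the kernel at stage $j$, and then pass to the filtered colimit (using that filtered colimits of modules are exact and commute with $\otimes$) to conclude $K\cong K_i\otimes_{R_i}R$ is finitely generated. The only step you gloss over slightly is the identification $\colim_{j\ge i}(J_iR_j)\cong J$ as a submodule of $R$, but this is routine given the exactness of filtered colimits.
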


We continue with a few preliminary remarks about filtered rings and modules as well as their associated graded; a reference is \cite[III.2.1--4]{bourbaki_commalg}. Suppose $R$ is a filtered ring and write $\Mod_R^{\filt}$ for the category of filtered discrete $R$-modules, i.e., $R$-modules with a (decreasing) filtration which is compatible with the given (decreasing) filtration on $R$. The associated graded $\gr$ defines a functor to graded $R$-modules defined by 
\[
 (\Fil_kM)_{k\ge 0}  \mapsto \bigoplus_{k\ge 0}(\Fil_kM/\Fil_{k+1}M).
\]
If $R$ is a commutative ring and $I \subseteq R$ an ideal, we can consider the $I$-adic filtration on $R$ and write $\gr_IR = \bigoplus_{k \ge 0}I^{k}/I^{k+1}$ for the associated graded ring. More generally, every $R$-module $M$ carries a filtration derived from the $I$-adic filtration on $R$, so that the $k$-th filtration step is given by $I^kM$. The associated graded $\gr_IM = \bigoplus_{k \ge 0}I^{k}M/I^{k+1}M$ is then naturally a $\gr_IR$-module.  

Consider a short exact sequence of $R$-modules
\[
0 \to M_1 \to M_2 \to M_3 \to 0.
\]
If $R$ is equipped with the $I$-adic filtration as above, this sequence can be promoted to a sequence of filtered $R$-modules as follows: Equip $M_2$ with the exhaustive filtration derived from $R$, i.e., the $I$-adic filtration. As submodule and quotient, $M_1$ and $M_3$ inherit exhaustive filtrations from $M_2$, respectively, making this sequence a short exact sequence of filtered $R$-modules. Furthermore, the filtration on $M_1$ is separated if the one on $M_2$ is.

As a special case of \cite[III.2.4, Proposition 4]{bourbaki_commalg}, we thus obtain a short exact sequence of $\gr_IR$-modules
\[
0 \to \gr M_1 \to \gr_IM_2 \to \gr_IM_3 \to 0.
\]
We emphasize, as the notation suggests, that the induced filtration on $M_3$ is the derived (i.e., $I$-adic) one, while the induced filtration in general does not coincide with the $I$-adic filtration on $M_1$. Therefore, in general, $\gr M_1$ is not isomorphic to $\gr_I M_1$.

The analogue of the following lemma for Noetherian rings was proven in \cite[III.3.9, Corollary 2]{bourbaki_commalg}.

\begin{lemma}\label{lem:coherencecriterion2}
Let $R$ be a commutative ring which is $I$-adically complete with respect to an ideal $I \subseteq R$. If $\gr_IR$ is a coherent ring, then so is $R$.
\end{lemma}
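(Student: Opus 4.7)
The plan is to mimic the Noetherian analogue mentioned just before the lemma, which deduces Noetherianness of an $I$-adically complete ring $R$ from that of $\gr_I R$. That argument rests on two ingredients: (a) every graded submodule of a finitely generated free $\gr_I R$-module is finitely generated, which uses Noetherianness; and (b) a lifting principle (Bourbaki III.2.9 Prop.~12) asserting that any separated and complete filtered $R$-module whose associated graded is finitely generated over $\gr_I R$ is itself finitely generated over $R$. The lifting principle (b) carries over verbatim to our setting, so the core of the argument is to replace (a) by a subtler construction using only coherence of $\gr_I R$.

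To prove that $R$ is coherent I would show that every finitely generated ideal $J = (a_1,\ldots,a_n) \subseteq R$ is finitely presented. Consider the presentation $\phi\colon R^n \twoheadrightarrow J$ with $\phi(e_i) = a_i$ and set $K = \ker \phi$; the task is to show that $K$ is finitely generated over $R$. Equip $R^n$ with the shifted product $I$-adic filtration in which $e_i$ has filtration degree $d_i := \sup\{k \geq 0 : a_i \in I^k\}$, which is finite by $I$-adic separatedness of $R$. Then $\phi$ is strict, and the associated graded map $\gr \phi\colon \gr R^n \to \gr R$ is a graded $\gr R$-linear map whose image is the finitely generated graded ideal $(\bar a_1, \ldots, \bar a_n) \subseteq \gr R$. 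By coherence of $\gr R$ this image is finitely presented, hence $\ker(\gr \phi)$ is a finitely generated graded submodule of $\gr R^n$. Giving $K$ the subspace filtration from $R^n$, which is separated and complete since $K$ is a closed submodule of the $I$-adically complete $R^n$, the exactness of $\gr$ for strict filtered short exact sequences (recalled just before the lemma) yields an inclusion $\gr K \hookrightarrow \ker(\gr \phi)$ of graded $\gr R$-modules.

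The central step, and the main obstacle, is to upgrade the finite generation of $\ker(\gr \phi)$ to that of $\gr K$. In general the inclusion $\gr K \subseteq \ker(\gr \phi)$ is strict, and over a merely coherent ring submodules of finitely generated modules need not be finitely generated. I would address this by a successive approximation argument combining the coherence of $\gr R$ with the $I$-adic completeness of $R$. Starting from a lift $v \in R^n$ of a homogeneous generator of $\ker(\gr \phi)$, the element $\phi(v)$ lies in a strictly higher filtration of $R$; when its leading term lies in the image of $\gr \phi$ at the appropriate shifted degree, a suitable correction $v \mapsto v - w$ reduces the filtration of $\phi(v)$ by one, and iterating the correction yields, by completeness, a convergent series whose limit $\tilde k \in K$ retains the original leading term. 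Obstructions, i.e.\ leading terms of intermediate $\phi(v)$ that fail to lie in $\mathrm{Im}(\gr \phi)$, are absorbed by first multiplying $v$ by a suitable element of an annihilator ideal in $\gr R / \mathrm{Im}(\gr \phi)$, an ideal that is finitely generated by coherence of $\gr R$. This procedure produces a finite family $\tilde k_1, \ldots, \tilde k_N \in K$ whose leading terms generate $\gr K$ as a $\gr R$-module, at which point applying the Bourbaki lifting principle to $K$ yields that $K$ is finitely generated over $R$, concluding the proof.
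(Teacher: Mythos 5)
Your plan and the paper's proof diverge at the very first choice, and the divergence is what creates the gap you then struggle with. The paper does \emph{not} shift the filtration on the free module: it equips $R^d$ with the unshifted $I$-adic filtration, gives $K$ the induced (subspace) filtration and gives $J$ the induced (quotient) filtration, which coincides with the $I$-adic filtration on $J$. The point of this choice is that the Bourbaki exactness statement recalled just before the lemma (\cite[III.2.4, Proposition 4]{bourbaki_commalg}) then produces a \emph{strictly exact} sequence
\[
0 \to \gr K \to (\gr_IR)^d \to \gr_IJ \to 0,
\]
so $\gr K$ is on the nose the kernel of a surjection from a finite free $\gr_I R$-module onto $\gr_IJ$. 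There is no intermediate inclusion $\gr K \subseteq \ker(\gr\phi)$ to bridge: the only remaining burden is to check that $\gr_I J$ is a finitely presented $\gr_IR$-module, which the paper does by regarding $\gr_IJ$ as a finitely generated ideal of $\gr_IR$ and invoking coherence; then $\gr K$ is finitely generated, and \cite[III.2.9, Corollary 1]{bourbaki_commalg} lifts this to finite generation of $K$. There is no successive approximation in the paper's proof at all.

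Your shifted-filtration route deliberately maps to $\gr_I R$ (so the image of $\gr\phi$ is the initial-form ideal $(\bar a_1,\dots,\bar a_n)$), but this is precisely what makes $\gr K \subsetneq \ker(\gr\phi)$ in general, and the step you flag as ``the central step, and the main obstacle'' is a genuine gap. Without a Noetherian or Artin--Rees type hypothesis, the graded ideal of \emph{all} initial forms of elements of $J$ can be strictly larger than $(\bar a_1,\dots,\bar a_n)$ and need not be finitely generated over the merely coherent $\gr_IR$; the obstructions in your approximation scheme live exactly in that difference. In particular, there is no reason why the leading term of an intermediate $\phi(v)\in J\cap I^{w+1}$ should lie in $(\bar a_1,\dots,\bar a_n)$ in degree $w+1$, since the representation $\phi(v)=\sum r_ia_i$ may involve cancellation that raises the valuation, and the suggestion of ``absorbing obstructions by first multiplying $v$ by a suitable element of an annihilator ideal'' is not spelled out and does not obviously address this. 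Incidentally, the assertion early in your sketch that ``$\phi$ is strict'' for the map $R^n\to R$ with the shifted filtration is itself incorrect for the same reason; strictness is only automatic for the surjection onto $J$ with its quotient filtration, which is exactly the filtration the paper uses. So the right move is to abandon the shift, work with the quotient filtration on $J$, and use the resulting exactness of $\gr$ as the paper does.
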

\begin{proof}
Let $J \subseteq R$ be a finitely generated ideal, and choose a short exact sequence of filtered $R$-modules
\[
0 \to K \to R^d \to J \to 0,
\]
as above. Since the $I$-adic filtration on $R^d$ is separated, so is the induced filtration on $K$. We thus obtain a short exact sequence of $\gr_IR$-modules
\[
0 \to \gr K \to (\gr_IR)^d \to \gr_IJ \to 0.
\]
Since $\gr_IR$ is coherent and $\gr_IJ$ is a finitely generated ideal in $\gr_IR$, it must be finitely presented, hence $\gr K$ is finitely generated. It follows from \cite[III.2.9, Corollary 1]{bourbaki_commalg} that $K$ is finitely generated over $R$, so $J$ is a finitely presented ideal in $R$.
\end{proof}

We now return to the case of interest. We write $A = E^{\vee}_*E$ for the graded commutative $E_*$-algebra of cooperations of $E$, and denote the degree 0 part of $A$ by $A_0$.  Note that both $A$ and $A_0$ are pro-free (i.e., the completion of a free $E$-module), see \cite[Proposition 2.2]{hovey_operations}. It follows that $A_0^{\htimes k} = A_0^{\htimes_{E_0} k}$ is pro-free for $k \ge 0$.

\begin{lemma}\label{lem:grcoherent}
For any $k\ge 0$, the graded ring $\gr_I(A_0^{\htimes k})$ is coherent.
\end{lemma}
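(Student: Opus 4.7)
The plan is to make $\gr_I A_0^{\htimes k}$ sufficiently explicit to display it as a filtered colimit of Noetherian rings along flat transition maps, at which point coherence follows immediately from \Cref{lem:coherencecriterion1}.

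First I would observe that $A_0^{\htimes k}$ remains pro-free over $E_0$—iterated completed tensor products of pro-free modules are pro-free—and in particular is flat. Flatness together with pro-freeness then yield the two identifications
\[
\gr_I A_0^{\htimes k} \;\cong\; \gr_I(E_0) \otimes_{E_0/I} \bigl(A_0^{\htimes k}/I\bigr) \qquad \text{and} \qquad A_0^{\htimes k}/I \;\cong\; (A_0/I)^{\otimes_{\mathbb{F}_{p^n}} k}.
\]
Since $I = I_n$ is generated by the regular sequence $(p, u_1, \ldots, u_{n-1})$ on the regular local ring $E_0$, the first factor is the polynomial ring $\gr_I E_0 \cong \mathbb{F}_{p^n}[X_0, \ldots, X_{n-1}]$, while the reduction $E_*^{\vee}E/I_n \cong K_*E$ recalled in the excerpt identifies $A_0/I \cong K_0E$.

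Next I would invoke the standard isomorphism $K_0E \cong \Hom_{\mathrm{cts}}(\mathbb{G}_n, \mathbb{F}_{p^n})$—essentially the $I_n$-reduction of the formula stated in Theorem D—to express $K_0E$ as a filtered colimit of finite-dimensional $\mathbb{F}_{p^n}$-algebras. Since $\mathbb{G}_n$ is profinite and $\mathbb{F}_{p^n}$ is discrete, this continuous Hom equals $\colim_U \mathbb{F}_{p^n}^{\mathbb{G}_n/U}$ over open normal subgroups $U\subseteq \mathbb{G}_n$, each term being a finite product of copies of $\mathbb{F}_{p^n}$. The transition maps, induced by the surjections $\mathbb{G}_n/U \twoheadrightarrow \mathbb{G}_n/V$, decompose factor-wise into diagonal inclusions $\mathbb{F}_{p^n} \hookrightarrow \mathbb{F}_{p^n}^m$, which are free---hence flat---extensions.

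Putting the pieces together, and using that filtered colimits commute with tensor products together with the identification $k^S \otimes_k k^T \cong k^{S\times T}$ for a field $k$ and finite sets $S, T$, one obtains
\[
\gr_I A_0^{\htimes k} \;\cong\; \colim_U\, \mathbb{F}_{p^n}[X_0,\ldots,X_{n-1}] \otimes_{\mathbb{F}_{p^n}} \mathbb{F}_{p^n}^{(\mathbb{G}_n/U)^k},
\]
a filtered colimit of finite products of the Noetherian ring $\mathbb{F}_{p^n}[X_0,\ldots,X_{n-1}]$, whose transition maps remain flat after base change along $\mathbb{F}_{p^n} \to \mathbb{F}_{p^n}[X_0,\ldots,X_{n-1}]$. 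An application of \Cref{lem:coherencecriterion1} then concludes the argument. The main point requiring care is the accurate bookkeeping of the reduction $A_0^{\htimes k}/I \cong (K_0E)^{\otimes k}$ as rings and its compatibility with the colimit presentation of $K_0E$; the flatness assertion itself is elementary, being a consequence of working over the field $\mathbb{F}_{p^n}$.
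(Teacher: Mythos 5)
Your argument is correct and lands on the same structural observation as the paper: once $\gr_I(A_0^{\htimes k})$ is identified as a polynomial ring in $n$ variables over $(A_0/I)^{\otimes k}$ (via regularity of $I$ and pro-freeness), coherence comes down to writing $(A_0/I)^{\otimes k}$ as a filtered colimit of Noetherian rings along flat transition maps and invoking \Cref{lem:coherencecriterion1}. The only genuine point of divergence is how you exhibit this ind-Noetherian (in fact ind-\'etale) structure on $K_0E$: the paper quotes Hovey's explicit generators-and-relations description $K_0E \cong \colim_j B_j$ with $B_j$ \'etale over $\F_{p^n}$, whereas you reconstruct it from the isomorphism $K_0E \cong \Hom_{\mathrm{cts}}(\mathbb{G}_n,\F_{p^n})$ and the profinite structure of $\mathbb{G}_n$, obtaining $\colim_U \F_{p^n}^{\mathbb{G}_n/U}$ with diagonal (hence flat) transition maps. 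Your route is a bit more conceptual and requires checking that the isomorphism $E_0^\vee E \cong \Hom_{\mathrm{cts}}(\mathbb{G}_n, E_0)$ reduces mod $I_n$ to $\Hom_{\mathrm{cts}}(\mathbb{G}_n, \F_{p^n})$ — this does hold, since $\Hom_{\mathrm{cts}}(\mathbb{G}_n, E_0)$ is pro-free over $E_0$ so the quotient commutes with the function ring, but you should spell that out rather than leave it at ``essentially the $I_n$-reduction of Theorem D.'' Otherwise the two arguments are equivalent in content, and either is a valid proof.
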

\begin{proof}
We observe that the $E_0$-module $A_0^{\htimes k}$ is pro-free, $I$ is a regular ideal in $A_0^{\htimes k}$, and $A_0^{\htimes k}$ is $I$-adically complete, see \cite[Theorem A.9]{hoveystrickland_memoir}. Therefore, there is a natural isomorphism
\[
\gr_I(A_0^{\htimes k}) \cong \left( A_0^{\htimes k}/I\right) [x_1,\ldots,x_n],
\]
see for example \cite[Theorem 1.1.8]{brunsherzog_cmr}. Hovey proves in \cite[Proposition 3.12 and Theorem 3.13]{hovey_operations} that $A_0/I \cong K_0E$ is an ind-{\'e}tale algebra: it can be written as sequential colimit
\[
A_0/I \cong \colim_j B_j,
\]
where 
\[
B_j = \F_{p^n}[t_0,\ldots,t_j]/(t_0^{p^n-1}-1,t_1^{p^n}-t_1,\ldots,t_j^{p^n}-t_j) \htimes \F_{p^n}
\]
is an {\'e}tale algebra over $\F_{p^n}$ for all $j$; see also the proof of \cite[Theorem 12]{strickland_grosshopkins}. In particular, the transition maps $B_j \to B_{j+1}$ are flat for all $j$. Combining these two displayed isomorphisms, we thus obtain a corresponding description
\begin{align*}
    \gr_I(A_0^{\htimes k}) & \cong (A_0^{\htimes k}/I)[x_1,\ldots,x_n] \\
    & \cong (K_0E)^{\htimes_{K_0}k}[x_1,\ldots,x_n] \\
    & \cong \colim_j (B_j^{\htimes_{K_0} k}[x_1,\ldots,x_n]).
\end{align*}
This exhibits $\gr_I(A_0^{\htimes k})$ as a sequential colimit of Noetherian algebras with flat transition maps, hence it is coherent by \Cref{lem:coherencecriterion1}. Therefore, $\gr_I(A_0^{\htimes k})$ is coherent. 
\end{proof}

\begin{remark}
There are counterexamples to the analogue of the Hilbert basis theorem for coherent rings, which is why we appeal to the argument involving the ind-{\'e}tale description of $K_0E$.
\end{remark}

\begin{theorem}\label{thm:coherenthighercoop}
For any $k \ge 0$, the graded commutative ring $A^{\htimes k}$ is coherent.
\end{theorem}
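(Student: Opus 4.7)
The plan is to combine the two preparatory lemmas: first reduce graded coherence to ordinary coherence using $2$-periodicity, and then apply the completion criterion \cref{lem:coherencecriterion2} with the associated-graded computation from \cref{lem:grcoherent}.

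First I would dispense with the trivial case $k=0$: the ring $A^{\htimes 0} = E_*$ is graded Noetherian and hence coherent. For $k \ge 1$, since $E$ is $2$-periodic so is $A = E_*^{\vee}E$, and consequently also the $k$-fold completed tensor power $A^{\htimes k}$, which contains a unit in degree $-2$. The standard equivalence between graded modules over a $2$-periodic evenly graded ring and ordinary modules over its degree $0$ part (the same underlying identification used in \cref{lem:gradedungradedcomparison}) preserves finite generation and finite presentation. Therefore graded coherence of $A^{\htimes k}$ is equivalent to coherence of its degree $0$ part, which unwinds to the $k$-fold $I_n$-completed tensor power $A_0^{\htimes k}$ of $A_0 = E_0^{\vee}E$ over $E_0$.

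Next I would apply \cref{lem:coherencecriterion2} to the ring $R := A_0^{\htimes k}$ with the ideal $I := I_n \cdot A_0^{\htimes k}$. The ring $A_0^{\htimes k}$ is pro-free over $E_0$: this is \cite[Proposition 2.2]{hovey_operations} for $k=1$, and is inherited by iterated completed tensor powers via the formalism of \cite[Appendix A]{hoveystrickland_memoir}. In particular $A_0^{\htimes k}$ is $I_n$-adically complete. The hypothesis on the associated graded is exactly the content of \cref{lem:grcoherent}, which gives that $\gr_{I_n}(A_0^{\htimes k})$ is coherent. Hence \cref{lem:coherencecriterion2} yields that $A_0^{\htimes k}$ is coherent, which by the reduction above means that $A^{\htimes k}$ is graded coherent, as claimed.

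The substantive work has already been carried out in \cref{lem:grcoherent} (which relies crucially on Hovey's ind-\'etale description of $K_0E$) and in the general criterion \cref{lem:coherencecriterion2}. The remaining steps here are the formal reduction from graded to ungraded coherence via $2$-periodicity and the stability of $I_n$-adic completeness under iterated completed tensor products; I anticipate neither to pose a genuine obstacle.
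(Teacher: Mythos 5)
Your proposal is correct and follows essentially the same route as the paper: reduce via $2$-periodicity to the degree-zero ring $A_0^{\htimes k}$, note it is pro-free hence $I_n$-adically complete, and then apply \cref{lem:coherencecriterion2} together with \cref{lem:grcoherent}. The only cosmetic difference is that you split off $k=0$ separately, which is unnecessary since both supporting lemmas already cover $k\ge 0$.
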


\begin{proof}
The graded ring $A^{\htimes k}$ is even and 2-periodic, so it suffices to show that $A_0^{\htimes k}$ is coherent. This is an immediate consequence of \Cref{lem:grcoherent} and \Cref{lem:coherencecriterion2}.
\end{proof}

\subsection{Failure of naive descent for Balmer spectra}

We recall from \Cref{prop:dualizabledescent} that there is an equivalence $\Sp_{K(n)}^{\dual} \simeq \Tot\Mod_{E^{\htimes \bullet +1}}^{\omega}$ of symmetric monoidal $\infty$-categories and that $E \in \CAlg(\Sp_{K(n)})$ is descendable, but that $E \not \in \Sp_{K(n)}^{\dual}$, so that Balmer's result does not apply. We begin with a partial generalization of \cref{prop:etheorycomparison}, where we remind the reader that $A = E^{\vee}_*E$. 

\begin{proposition}\label{prop:ektheorycomparison}
Let $k \ge 0$. The graded comparison map 
\[
\rho^*_{\Mod_{E^{\htimes k+1}}^{\omega}}\colon\Spc(\Mod_{E^{\htimes k+1}}^{\omega}) \to \Spec^h(A^{\htimes k}) \cong \Spec(A_0^{\htimes k})
\]
is surjective. 
\end{proposition}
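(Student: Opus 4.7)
The plan is to apply Balmer's comparison theorem (\cref{thm:balmer_comparison}) to the essentially small tensor triangulated category $\cK = \Mod_{E^{\htimes k+1}}^{\omega}$, for which we need to identify the graded endomorphism ring of the unit and verify its coherence. The tensor unit of this category is $E^{\htimes k+1}$ itself, so the graded endomorphism ring of the unit is
\[
R_{\cK}^{*} \;=\; \pi_{*}\Hom_{E^{\htimes k+1}}(E^{\htimes k+1},E^{\htimes k+1}) \;\cong\; \pi_{*}L_{K(n)}E^{\otimes k+1}.
\]

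The first substantive step is to identify $\pi_{*}L_{K(n)}E^{\otimes k+1}$ with $A^{\htimes k}$, the $k$-fold completed tensor product of $A = E_{*}^{\vee}E$ over $E_{*}$. One proves this by induction on $k$: the base cases $k=0,1$ are definitional, and the inductive step uses that $E_*^{\vee}E$ is pro-free over $E_*$ (\cite[Proposition 2.2]{hovey_operations}), so that the relevant $K(n)$-local K\"unneth spectral sequence (e.g.\ \cite[Theorem 5.4]{hovey2004ss}) collapses and yields the completed tensor product on the $E_{2}$-page. This identification is in any case well-known, being the $*$-graded refinement of the isomorphism $\pi_{0}L_{K(n)}E^{\otimes k+1}\cong \Hom_{\mathrm{cts}}(\mathbb{G}_{n}^{\times k},E_{0})$ recorded in Theorem~D.

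Once this identification is in place, the argument is quick. The ring $A^{\htimes k}$ is graded coherent by \cref{thm:coherenthighercoop}. Hence the second clause of Balmer's comparison theorem (\cref{thm:balmer_comparison}) applies and shows that
\[
\rho^{*}_{\Mod_{E^{\htimes k+1}}^{\omega}}\colon \Spc(\Mod_{E^{\htimes k+1}}^{\omega}) \longrightarrow \Spec^{h}(A^{\htimes k})
\]
is surjective. Finally, because $E_{*}$ is evenly graded and 2-periodic, so is $A$, and these properties are preserved under the completed tensor product over $E_{*}$, so that $A^{\htimes k}$ is itself evenly graded and 2-periodic. Therefore \cref{lem:gradedungradedcomparison} supplies the required homeomorphism $\Spec^{h}(A^{\htimes k}) \cong \Spec(A_{0}^{\htimes k})$, completing the proof.

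The main obstacle is really only the first bookkeeping step, the identification $\pi_{*}L_{K(n)}E^{\otimes k+1}\cong A^{\htimes k}$; everything else is a direct invocation of coherence (\cref{thm:coherenthighercoop}), the surjectivity clause of Balmer's comparison theorem, and the graded/ungraded dictionary of \cref{lem:gradedungradedcomparison}. Note that we do not expect $\rho^{*}$ to be a homeomorphism in general for $k\ge 1$, since $A^{\htimes k}$ is merely coherent (not Noetherian) and the hypotheses of \cref{thm:ds} are not met; this matches the statement of Theorem~D, which only asserts the homeomorphism for $k=0$.
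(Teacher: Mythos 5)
Your proposal is correct and follows essentially the same route as the paper: identify $\pi_*E^{\htimes k+1}\cong A^{\htimes k}$ via pro-freeness of $A$, invoke the coherence result of \cref{thm:coherenthighercoop} together with the surjectivity clause of \cref{thm:balmer_comparison}, and finish with \cref{lem:gradedungradedcomparison}. The only difference is that you sketch the inductive K\"unneth argument for the identification explicitly, whereas the paper simply asserts the canonical isomorphism.
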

\begin{proof}
Because $A$ is pro-free, there is a canonical isomorphism $\pi_*E^{\htimes k+1} \cong A^{\htimes k}$. The target of Balmer's graded comparison map
\[
\Spc(\Mod_{E^{\htimes k+1}}^{\omega}) \to \Spec^h(\pi_*E^{\htimes k+1})
\]
thus identifies with $\Spec^h(A^{\htimes k})$. We have seen in \Cref{thm:coherenthighercoop} that $A^{\htimes k}$ is coherent, hence the comparison map is surjective by \Cref{thm:balmer_comparison}. The stipulated isomorphism of Zariski spectra holds because $A^{\htimes k}$ is even and 2-periodic, using \cref{lem:gradedungradedcomparison}.
\end{proof}

\begin{corollary}\label{prop:abstractcomparison}
Consider the following diagram:
\[
\xymatrix{\Spc(\Mod^{\omega}_{E \widehat{\otimes} E}) \ar@<0.5ex>[r]^{\pi_1} \ar@<-0.5ex>[r]_{\pi_2}  \ar@{->>}[d]^{\rho_2} & \Spc(\Mod^{\omega}_E) \ar[d]_{\rho_1}^{\cong} \\
\Spec(E_0^\vee E) \ar@<0.5ex>[r]^{\pi_1'} \ar@<-0.5ex>[r]_{\pi_2'} & \Spec(E_0).}
\]
Here, the $\rho_i$ are the comparison maps, and the $\pi_i$ and $\pi_i'$ are induced by the obvious base-change functors.
Then this diagram commutes for $i=1,2$, $\rho_1$ is a homeomorphism, $\rho_2$ is surjective, and the map induced on coequalizers of the two pairs of parallel arrows is a homeomorphism.
\end{corollary}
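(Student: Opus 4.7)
The plan is to assemble the four claims from already established ingredients, and then to deduce the coequalizer statement purely formally from the fact that $\rho_1$ is a homeomorphism and $\rho_2$ a continuous surjection.

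First, the commutativity of the two squares is a consequence of the naturality of Balmer's graded comparison map $\rho^\ast$ with respect to the symmetric monoidal base-change functors inducing $\pi_i$, together with the identification of $\Spec^h$ and $\Spec$ provided by \Cref{lem:gradedungradedcomparison} (applied to the 2-periodic evenly graded rings $\pi_\ast E$ and $\pi_\ast(E \htimes E) \cong A$, the latter using pro-freeness of $A$). The claim that $\rho_1$ is a homeomorphism is then just \Cref{prop:etheorycomparison}, while the surjectivity of $\rho_2$ is the case $k=1$ of \Cref{prop:ektheorycomparison}, which in turn relies on the coherence of $A = E_\ast^\vee E$ established in \Cref{thm:coherenthighercoop}.

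It remains to verify that the induced map $\bar{\rho}$ between the two coequalizers (formed in topological spaces) is a homeomorphism. Let $q_1 \colon \Spc(\Mod_E^\omega) \to Q_{\mathrm{top}}$ and $q_2 \colon \Spec(E_0) \to Q_{\mathrm{bot}}$ denote the two quotient maps; by the universal property and the commutativity established above, the homeomorphism $\rho_1$ descends to a continuous map $\bar{\rho} \colon Q_{\mathrm{top}} \to Q_{\mathrm{bot}}$ characterized by $\bar{\rho}\circ q_1 = q_2 \circ \rho_1$. To see that $\bar{\rho}$ is bijective, I will check that the equivalence relation on $\Spec(E_0)$ generated by the pairs $(\pi'_1(y), \pi'_2(y))$ is exactly the $\rho_1$-image of the one on $\Spc(\Mod_E^\omega)$ generated by $(\pi_1(x), \pi_2(x))$: the identities $\pi'_i \circ \rho_2 = \rho_1 \circ \pi_i$ together with the surjectivity of $\rho_2$ immediately give both inclusions.

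Finally, to upgrade the bijection $\bar{\rho}$ to a homeomorphism, I would use the relation $\bar{\rho}^{-1} \circ q_2 = q_1 \circ \rho_1^{-1}$, obtained by inverting $\rho_1$ in the commuting square. Since $q_1 \circ \rho_1^{-1}$ is continuous and $q_2$ is a quotient map, the universal property of the quotient topology forces $\bar{\rho}^{-1}$ to be continuous, completing the argument. The only step I anticipate requiring genuine input rather than bookkeeping is the surjectivity of $\rho_2$, which is precisely where the coherence result \Cref{thm:coherenthighercoop} is brought to bear; everything else is formal naturality and general topology.
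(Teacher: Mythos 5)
Your proposal is correct and follows the same route the paper takes: the first three claims are read off from \Cref{lem:gradedungradedcomparison}, \Cref{prop:etheorycomparison}, and the $k=1$ case of \Cref{prop:ektheorycomparison} (which rests on the coherence result \Cref{thm:coherenthighercoop}), exactly as in the paper, and the coequalizer statement is the elementary bookkeeping the paper explicitly leaves to the reader, with injectivity coming from surjectivity of $\rho_2$ and the topological upgrade coming from $\rho_1$ being a homeomorphism. Your way of finishing, showing $\bar{\rho}^{-1}\circ q_2 = q_1\circ\rho_1^{-1}$ and invoking the universal property of the quotient, is an equivalent repackaging of the paper's suggestion to observe that the induced map is an open surjection; both hinge on the same two facts and are fine.
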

\begin{proof}
The diagram commutes by the naturality of the comparison maps. We have already seen in \cref{prop:etheorycomparison} that $\rho_1$ is a homeomorphism
because $E_*$ is even, regular and Noetherian. Next, $\rho_2$ is surjective by \cref{prop:ektheorycomparison}. The 
consequence for coequalizers is elementary and left to the reader: One might start by noting that the map on coequalizers
is an open surjection, because $\rho_1$ is a homeomorphism, and that it is injective because $\rho_2$ is surjective.
\end{proof}

\begin{remark}
We recall that the category of spectral spaces $\SpecTop \subseteq \Top$ is a reflexive subcategory, and so there exists a functor $\Top \to \SpecTop$ that is left adjoint to the inclusion, see \cite[Section 11.1]{dickmann}. We call this the spectrification functor. 
\end{remark}
Finally, we present our example. 
\begin{proposition}\label{prop:nodescent}
For height $n=2$, the diagram 
\[ 
\xymatrix{\Spc(\Mod_{E\htimes E}^\omega) \ar@<0.5ex>[r] \ar@<-0.5ex>[r] & \Spc(\Mod_E^\omega) \ar[r]^-{\pi} & \Spc(\Sp_{K(2)}^{\dual})}
\]
is not a coequalizer of topological spaces. More precisely: While $\pi$ is a topological quotient map (even an open surjection), the coequalizer of the first two terms is infinite, but $\Spc(\Sp_{K(2)}^{\dual})$ consists of three points.
However, the coequalizer computed in spectral spaces is identified with $\Spc(\Sp_{K(2)}^{\dual})$.
\end{proposition}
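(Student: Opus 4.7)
The plan is to make everything explicit via \Cref{prop:abstractcomparison} and \Cref{thm:dualizable_n=2}, then treat the topological and spectral coequalizers separately. Under the identifications $\Spc(\Mod_E^\omega) \cong \Spec(E_0)$ and $\Spc(\Sp_{K(2)}^\dual) = \{\cD_1, \cD_2, \cD_3\}$ with closures $\overline{\{\cD_k\}} = \{\cD_j : j \geq k\}$, the map $\pi$ sends $(0)$ and every height one prime $\fp \neq (p)$ to $\cD_1$, sends $(p)$ to $\cD_2$, and sends $(p, u_1)$ to $\cD_3$; its surjectivity is \Cref{cor:surjective_balmer}. The preimages of the four opens of the target are $\emptyset$, $\Spec(E_0) \setminus V(I_1)$, $\Spec(E_0) \setminus V(I_2)$, and $\Spec(E_0)$, and a case analysis on whether a given nonempty open $U \subseteq \Spec(E_0)$ contains $(p)$ or $(p, u_1)$ shows that $\pi(U)$ is always one of these four sets; hence $\pi$ is open and a topological quotient map.

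For the spectral coequalizer, I use that spectrification is left adjoint to $\SpecTop \hookrightarrow \Top$ and thus preserves colimits, so it suffices to spectrify the topological coequalizer $Q := \Spec(E_0)/\sim$. Under $E_0^\vee E \cong \Hom_{\mathrm{cts}}(\mathbb{G}_2, E_0)$, each $g \in \mathbb{G}_2$ yields an evaluation section $\sigma_g \colon \Spec(E_0) \to \Spec(E_0^\vee E)$ with $\pi_1' \circ \sigma_g = \id$ and $\pi_2' \circ \sigma_g$ equal to the action of $g^{-1}$. Consequently $\sim$ contains the $\mathbb{G}_2$-orbit equivalence, so the open subsets of $Q$ pull back to $\mathbb{G}_2$-invariant opens in $\Spec(E_0)$. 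By \Cref{thm:invariant_primes} these are exactly the four sets $\Spec(E_0) \setminus V(I_k)$ for $0 \leq k \leq 3$, so $Q$ has at most four opens and its Kolmogorov quotient has at most three points. The canonical coequalizing surjection $Q \to \Spc(\Sp_{K(2)}^\dual)$ becomes a homeomorphism on the Kolmogorov quotient, and since $\Spc(\Sp_{K(2)}^\dual)$ is already spectral, this identifies it with the spectral coequalizer.

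To see that the topological coequalizer is infinite, I aim to show $Q$ has infinitely many points. The key sub-step is the reverse containment $\sim \subseteq \mathbb{G}_2$-orbit equivalence: any prime $\fr$ of $\Hom_{\mathrm{cts}}(\mathbb{G}_2, E_0)$ should satisfy $\pi_2'(\fr) \in \mathbb{G}_2 \cdot \pi_1'(\fr)$, which one argues via the continuous-functions structure on $\mathbb{G}_2$ by decomposing it into clopens along a congruence filtration and reducing to the evaluation primes $\ker(\ev_g)$. Granting this, $Q$ identifies with the orbit set $\Spec(E_0)/\mathbb{G}_2$. The $\mathbb{G}_2$-orbits on height one primes $\neq (p)$ correspond via Weierstrass preparation to $\mathbb{G}_2$-orbits of closed points of $\mathfrak{X}$, and via the $\mathbb{G}_2$-equivariant Gross--Hopkins period map $\Phi$ (with finite fibers) these project onto $\mathbb{G}_2$-orbits in the image of $\Phi$. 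Since each such orbit has $p$-adic dimension bounded by that of $\mathbb{S}_2$ by \Cref{prop:orbitsize}, while the image of $\Phi$ is a $p$-adically uncountable subset of $\mathbb{P}^1(\mathbb{C}_p)$, infinitely many orbits must arise. The main obstacle is the sub-step itself: the rigorous analysis of the prime spectrum of $\Hom_{\mathrm{cts}}(\mathbb{G}_2, E_0)$, since this ring is vast and its primes need not come from evaluation at a single group element.
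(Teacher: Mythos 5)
Your overall strategy matches the paper's: reduce via \Cref{prop:abstractcomparison} to a quotient of $\Spec(E_0)$, make $\pi$ explicit via \Cref{cor:surjective_balmer} and \Cref{thm:dualizable_n=2}, check openness, and relate the spectrification to the three-point Balmer spectrum. Two substantive points, however.

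First, the ``main obstacle'' you flag --- showing $\sim$ is contained in $\mathbb{G}_2$-orbit equivalence --- is not actually a gap, and the paper closes it by citing Hovey. Concretely, $E_0^\vee E\cong\Hom_{\mathrm{cts}}(\mathbb{G}_2,E_0)$ and, writing $\mathbb{G}_2=\varprojlim_U\mathbb{G}_2/U$ over open normal subgroups $U$, one has $\Hom_{\mathrm{cts}}(\mathbb{G}_2,E_0)\cong\colim_U\prod_{\mathbb{G}_2/U}E_0$. Since $\Spec(-)$ sends filtered colimits of rings to cofiltered limits of spectral spaces and $\Spec$ of a \emph{finite} product is a disjoint union, this yields a homeomorphism $\Spec(E_0^\vee E)\cong\varprojlim_U\bigl(\mathbb{G}_2/U\times\Spec(E_0)\bigr)\cong\mathbb{G}_2\times\Spec(E_0)$ under which $(\pi_1',\pi_2')$ become projection and action. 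So \emph{every} prime of $\Hom_{\mathrm{cts}}(\mathbb{G}_2,E_0)$ is an evaluation prime in the sense you need, and $Q\cong\Spec(E_0)/\mathbb{G}_2$ on the nose. Your sketched approach via a congruence filtration of $\mathbb{G}_2$ into clopens is exactly this argument; carried through, it closes the step you left open.

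Second, your argument that $Q$ is infinite is more elaborate than necessary and contains an error: the Gross--Hopkins period map does not have finite fibers (it is \'etale, but its fibers are infinite discrete sets), so the reduction you propose does not go through as stated, and the cardinality/dimension comparison over $\mathbb{C}_p$ is not tight enough to conclude. The paper uses something much more elementary: $\mathbb{G}_2$ acts on $E_0\cong W\mathbb{F}_{p^2}[\![X]\!]$ by $\mathbb{Z}_p$-algebra automorphisms (the Galois factor fixes $\mathbb{Z}_p\subseteq W\mathbb{F}_{p^2}$, and $\mathbb{S}_2$ acts $W\mathbb{F}_{p^2}$-linearly), hence preserves the $\mathbb{Z}_p$-rank of $E_0/(P)$, i.e.\ the degree of an irreducible distinguished polynomial $P$. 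Thus distinguished polynomials of distinct degrees generate primes in distinct orbits, and $\Spec(E_0)/\mathbb{G}_2$ is infinite.

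The rest --- the explicit description of $\pi$, openness by a case analysis showing $\pi(U)$ is always one of the four opens of the target, and the identification of the spectrification $T$ of $Q$ with the three-point space using that $\mathbb{G}_2$-invariant closed subsets of $\Spec(E_0)$ are exactly the $V(I_k)$ --- is correct and parallels the paper's argument, with the minor stylistic difference that the paper deduces the identification of $T$ by showing each orbit $[(f)]$ is dense in $Q$ (because the orbit $(f)\mathbb{G}_2\subseteq\Spec(E_0)$ is infinite) rather than by counting opens.
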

\begin{proof}
The coequalizer of the diagram under consideration, namely
\[ 
\xymatrix{\Spc(\Mod_{E\htimes E}^\omega)\ar@<0.5ex>[r] \ar@<-0.5ex>[r] & \Spc(\Mod_E^\omega)}
\]
identifies by \Cref{prop:abstractcomparison} with the coequalizer of 
\[ 
\xymatrix{\Spec(E_0^{\vee}E)\ar@<0.5ex>[r] \ar@<-0.5ex>[r] & \Spec(E_0).}
\]
Hovey showed this is the coaction diagram associated with the action of $\mathbb{G}_2$ on $\Spec(E_0)$
\cite[Theorem 4.11 and Section 6.3]{hovey_operations}. Hence, the coequalizer is $\Spec(E_0)/\mathbb{G}_2$, the set of orbits of
prime ideals in $E_0$ under the action of $\mathbb{G}_2$. The induced map on coequalizers takes the form
\[ 
\widetilde \pi \colon \Spec(E_0)/\mathbb{G}_2\longrightarrow\Spc(\Sp_{K(2)}^{\dual}),
\]
and \Cref{cor:surjective_balmer} shows that it is given by
\[ 
\widetilde  \pi([\mathfrak p])=\{ X\in\Sp_{K(n)}^{\dual}\,\mid\, (E_0^\vee(X)\oplus E_1^\vee(X))_{\mathfrak p}=0\}.
\]
This lets us make $\widetilde  \pi$ completely explicit, as follows: We have $\widetilde \pi([0])=\cal{D}_1=\langle L_{K(2)}S^0/p\rangle$, $\widetilde \pi([(p)])=\cal{D}_2=\langle L_{K(2)}F(2)\rangle$,
$\widetilde \pi([(p,X)])=\cal{D}_3=(0)$, and all remaining primes (which correspond to irreducible distinguished polynomials)
are mapped to $\cal{D}_1$. This shows that the continuous map $\widetilde \pi$ is surjective, and an easy inspection,
left to the reader, shows that it is open. Since the $\mathbb{G}_2$-action cannot identify the primes generated 
by two irreducible polynomials of different degrees, we see that $\Spec(E_0)/\mathbb{G}_2$ is infinite. To address the final claim, we consider the factorization of $\tilde{\pi}$
$$ \tilde{\pi}\colon\Spec(E_0)/\mathbb{G}_2\longrightarrow T\longrightarrow\Spc(\Sp_{K(2)}^{\dual})$$
through the spectrification $T$ of the topological quotient $\Spec(E_0)/\mathbb{G}_2$, claiming that the second map is a homeomorphism.
The main point here is that the orbit $[(f)]$ of any irreducible distinguished polynomial $f$ gets mapped in $T$ to the image of $[0]$: Since $T$ is in particular sober and using the description of the universal map to a sober space from \cite[\href{https://stacks.math.columbia.edu/tag/0A2N}{Tag 0A2N}]{stacks-project}, to see this we have to show that the two elements $[0]$, $[(f)]$ have the same closure in $\Spec(E_0)/\mathbb{G}_2$, i.e., that the point $[(f)]$ is dense. For this, we have to argue that the orbit $(f)\mathbb{G}_2\subseteq\Spec(E_0)$ is dense, which follows from it being infinite (see \Cref{prop:orbitsize}). We can conclude that the second map is a continuous bijection between spectral spaces and using that $T$ carries the quotient topology induced by $\Spec(E_0)$, one checks that it is a homeomorphism. 
\end{proof}

\begin{remark}
In fact, it is possible to prove that the canonical maps 
\[
\xymatrix{\Spc(\Mod_{E\otimes E}^{\omega}) \ar@<0.5ex>[r] \ar@<-0.5ex>[r] & \Spc(\Mod_E^{\omega}) \ar@{->>}[r] & \Spc(\Sp_{E}^{\omega}),}
\]
form a coequalizer diagram of topological spaces at all heights and primes, in contrast to the $K(2)$-local situation studied above.  
\end{remark}

\bibliography{balmer}\bibliographystyle{alpha}

\begin{thebibliography}{{Lur}18a}

\bibitem[AMR19]{amgr_adelic}
David {Ayala}, Aaron {Mazel-Gee}, and Nick {Rozenblyum}.
\newblock {Stratified noncommutative geometry}.
\newblock {\em arXiv e-prints}, page arXiv:1910.14602, October 2019.

\bibitem[Bal05]{Balmer2005spectrum}
Paul Balmer.
\newblock The spectrum of prime ideals in tensor triangulated categories.
\newblock {\em J. Reine Angew. Math.}, 588:149--168, 2005.

\bibitem[Bal07]{Balmer2007Supports}
Paul Balmer.
\newblock Supports and filtrations in algebraic geometry and modular
  representation theory.
\newblock {\em Amer. J. Math.}, 129(5):1227--1250, 2007.

\bibitem[Bal10]{Balmer2010Spectra}
Paul Balmer.
\newblock Spectra, spectra, spectra---tensor triangular spectra versus
  {Z}ariski spectra of endomorphism rings.
\newblock {\em Algebr. Geom. Topol.}, 10(3):1521--1563, 2010.

\bibitem[Bal16]{Balmer2016Separable}
Paul Balmer.
\newblock Separable extensions in tensor-triangular geometry and generalized
  {Q}uillen stratification.
\newblock {\em Ann. Sci. \'{E}c. Norm. Sup\'{e}r. (4)}, 49(4):907--925, 2016.

\bibitem[Bal18]{Balmer2018surjectivity}
Paul Balmer.
\newblock On the surjectivity of the map of spectra associated to a
  tensor-triangulated functor.
\newblock {\em Bull. Lond. Math. Soc.}, 50(3):487--495, 2018.

\bibitem[BB19]{bb_chromaticchapter}
Tobias {Barthel} and Agn{\`e}s {Beaudry}.
\newblock {Chromatic structures in stable homotopy theory}.
\newblock {\em {H}andbook of {H}omotopy {T}heory}, page arXiv:1901.09004,
  January 2019.

\bibitem[BG19]{bg_adelic}
Scott {Balchin} and J.~P.~C. {Greenlees}.
\newblock {Adelic models of tensor-triangulated categories}.
\newblock {\em arXiv e-prints}, page arXiv:1903.02669, March 2019.

\bibitem[BH93]{brunsherzog_cmr}
Winfried Bruns and J{\"u}rgen Herzog.
\newblock {\em Cohen-{M}acaulay rings}, volume~39 of {\em Cambridge Studies in
  Advanced Mathematics}.
\newblock Cambridge University Press, Cambridge, 1993.

\bibitem[BH16]{BarthelHeard2016term}
Tobias Barthel and Drew Heard.
\newblock The {$E_2$}-term of the {$K(n)$}-local {$E_n$}-{A}dams spectral
  sequence.
\newblock {\em Topology Appl.}, 206:190--214, 2016.

\bibitem[BIK08]{BensonIyengarKrause2008Local}
Dave Benson, Srikanth~B. Iyengar, and Henning Krause.
\newblock Local cohomology and support for triangulated categories.
\newblock {\em Ann. Sci. \'{E}c. Norm. Sup\'{e}r. (4)}, 41(4):573--619, 2008.

\bibitem[Bou72]{bourbaki_commalg}
Nicolas Bourbaki.
\newblock {\em Elements of mathematics. {C}ommutative algebra}.
\newblock Hermann, Paris; Addison-Wesley Publishing Co., Reading, Mass., 1972.
\newblock Translated from the French.

\bibitem[BSS20]{bss_ultra1}
Tobias {Barthel}, Tomer {Schlank}, and Nathaniel {Stapleton}.
\newblock {Chromatic homotopy theory is asymptotically algebraic}.
\newblock {\em Inventiones mathematicae}, page arXiv:1711.00844, Jan 2020.

\bibitem[BSS21]{bss_ultra1.5}
Tobias Barthel, Tomer~M. Schlank, and Nathaniel Stapleton.
\newblock Monochromatic homotopy theory is asymptotically algebraic.
\newblock {\em Adv. Math.}, 393:Paper No. 107999, 44, 2021.

\bibitem[Cha96]{Chai1996group}
Ching-Li Chai.
\newblock The group action on the closed fiber of the {L}ubin-{T}ate moduli
  space.
\newblock {\em Duke Math. J.}, 82(3):725--754, 1996.

\bibitem[DH04]{devinatzhopkins_fixedpoints}
Ethan~S. Devinatz and Michael~J. Hopkins.
\newblock Homotopy fixed point spectra for closed subgroups of the {M}orava
  stabilizer groups.
\newblock {\em Topology}, 43(1):1--47, 2004.

\bibitem[DS16]{DellAmbrogioStanley2016Affine}
Ivo Dell'Ambrogio and Donald Stanley.
\newblock Affine weakly regular tensor triangulated categories.
\newblock {\em Pacific J. Math.}, 285(1):93--109, 2016.

\bibitem[DST19]{dickmann}
Max Dickmann, Niels Schwartz, and Marcus Tressl.
\newblock {\em Spectral spaces}, volume~35 of {\em New Mathematical
  Monographs}.
\newblock Cambridge University Press, Cambridge, 2019.

\bibitem[GH04]{goersshopkins}
P.~G. Goerss and M.~J. Hopkins.
\newblock Moduli spaces of commutative ring spectra.
\newblock In {\em Structured ring spectra}, volume 315 of {\em London Math.
  Soc. Lecture Note Ser.}, pages 151--200. Cambridge Univ. Press, Cambridge,
  2004.

\bibitem[Gla89]{glaz_coherentrings}
Sarah Glaz.
\newblock {\em Commutative coherent rings}, volume 1371 of {\em Lecture Notes
  in Mathematics}.
\newblock Springer-Verlag, Berlin, 1989.

\bibitem[Gro67]{EGA4}
A.~Grothendieck.
\newblock \'{E}l\'{e}ments de g\'{e}om\'{e}trie alg\'{e}brique. {IV}. \'{E}tude
  locale des sch\'{e}mas et des morphismes de sch\'{e}mas {IV}.
\newblock {\em Inst. Hautes \'{E}tudes Sci. Publ. Math.}, (32):361, 1967.

\bibitem[HG94]{gross_hopkins}
M.~J. Hopkins and B.~H. Gross.
\newblock Equivariant vector bundles on the {L}ubin-{T}ate moduli space.
\newblock In {\em Topology and representation theory ({E}vanston, {IL}, 1992)},
  volume 158 of {\em Contemp. Math.}, pages 23--88. Amer. Math. Soc.,
  Providence, RI, 1994.

\bibitem[HM07]{HahnMitchell2007Iwasawa}
Rebekah Hahn and Stephen Mitchell.
\newblock Iwasawa theory for {$K(1)$}-local spectra.
\newblock {\em Trans. Amer. Math. Soc.}, 359(11):5207--5238, 2007.

\bibitem[HMS94]{hms_pic}
Michael~J. Hopkins, Mark Mahowald, and Hal Sadofsky.
\newblock Constructions of elements in {P}icard groups.
\newblock In {\em Topology and representation theory ({E}vanston, {IL}, 1992)},
  volume 158 of {\em Contemp. Math.}, pages 89--126. Amer. Math. Soc.,
  Providence, RI, 1994.

\bibitem[Hov04a]{hovey_operations}
Mark Hovey.
\newblock Operations and co-operations in {M}orava {$E$}-theory.
\newblock {\em Homology Homotopy Appl.}, 6(1):201--236, 2004.

\bibitem[Hov04b]{hovey2004ss}
Mark Hovey.
\newblock Some spectral sequences in {M}orava {$E$}-theory.
\newblock 2004.
\newblock Available at
  \url{https://hopf.math.purdue.edu/Hovey/morava-E-SS.pdf}.

\bibitem[HS98]{HopkinsSmith1998Nilpotence}
Michael~J. Hopkins and Jeffrey~H. Smith.
\newblock Nilpotence and stable homotopy theory. {II}.
\newblock {\em Ann. of Math. (2)}, 148(1):1--49, 1998.

\bibitem[HS99]{hoveystrickland_memoir}
Mark~A Hovey and Neil~P Strickland.
\newblock Morava ${K}$-theories and localisation.
\newblock {\em Mem. Am. Math. Soc.}, 139(666):viii+100--100, 1999.

\bibitem[Koh13]{kohlhaase}
Jan Kohlhaase.
\newblock On the {I}wasawa theory of the {L}ubin-{T}ate moduli space.
\newblock {\em Compos. Math.}, 149(5):793--839, 2013.

\bibitem[KYA04]{kya_localcohom}
K.~Khashyarmanesh, M.~Yassi, and A.~Abbasi.
\newblock A new version of local-global principle for annihilations of local
  cohomology modules.
\newblock {\em Colloq. Math.}, 100(2):213--219, 2004.

\bibitem[Lan76]{landweber}
Peter~S. Landweber.
\newblock Homological properties of comodules over {$M{\rm U}\sb\ast (M{\rm
  U})$} and {BP{$\sb\ast $}}({BP}).
\newblock {\em Amer. J. Math.}, 98(3):591--610, 1976.

\bibitem[LT65]{lubintate}
Jonathan Lubin and John Tate.
\newblock Formal complex multiplication in local fields.
\newblock {\em Ann. of Math. (2)}, 81:380--387, 1965.

\bibitem[Lur11]{lurie_dag8}
Jacob Lurie.
\newblock Derived {A}lgebraic {G}eometry {VII}: {S}pectral {S}chemes.
\newblock {\em Draft available from author's website as
  \url{https://www.math.ias.edu/~lurie/papers/DAG-VII.pdf}}, 2011.

\bibitem[{Lur}17]{lurie-higher-algebra}
J.~{Lurie}.
\newblock Higher algebra.
\newblock Preprint available at
  \url{https://www.math.ias.edu/~lurie/papers/HA.pdf}, 2017.

\bibitem[{Lur}18a]{lurie-sag}
J.~{Lurie}.
\newblock Spectral algebraic geometry.
\newblock Preprint available at
  \url{https://www.math.ias.edu/~lurie/papers/SAG-rootfile.pdf}, 2018.

\bibitem[Lur18b]{lurie_ell2}
Jacob Lurie.
\newblock Elliptic cohomology {II}: {O}rientations.
\newblock {\em Draft available from author's website as
  \url{https://www.math.ias.edu/~lurie/papers/Elliptic-II.pdf}}, 2018.

\bibitem[Mat89]{matsumura}
Hideyuki Matsumura.
\newblock {\em Commutative ring theory}, volume~8 of {\em Cambridge Studies in
  Advanced Mathematics}.
\newblock Cambridge University Press, Cambridge, second edition, 1989.
\newblock Translated from the Japanese by M. Reid.

\bibitem[Mat16]{mathew_galois}
Akhil Mathew.
\newblock The {G}alois group of a stable homotopy theory.
\newblock {\em Adv. Math.}, 291:403--541, 2016.

\bibitem[Mor85]{morava_noetherian}
Jack Morava.
\newblock Noetherian localisations of categories of cobordism comodules.
\newblock {\em Ann. of Math. (2)}, 121(1):1--39, 1985.

\bibitem[Pst18]{pstragowski_pic}
Piotr Pstr\k{a}gowski.
\newblock {Chromatic Picard groups at large primes}.
\newblock {\em arXiv e-prints}, page arXiv:1811.05415, Nov 2018.

\bibitem[Pst21]{pstragowski_franke}
Piotr Pstr\k{a}gowski.
\newblock Chromatic homotopy theory is algebraic when {$p>n^2+n+1$}.
\newblock {\em Adv. Math.}, 391, 2021.

\bibitem[Rav92]{ravenel_orangebook}
Douglas~C. Ravenel.
\newblock {\em Nilpotence and periodicity in stable homotopy theory}, volume
  128 of {\em Annals of Mathematics Studies}.
\newblock Princeton University Press, Princeton, NJ, 1992.
\newblock Appendix C by Jeff Smith.

\bibitem[Ser06]{serre_lie}
Jean-Pierre Serre.
\newblock {\em Lie algebras and {L}ie groups}, volume 1500 of {\em Lecture
  Notes in Mathematics}.
\newblock Springer-Verlag, Berlin, 2006.
\newblock 1964 lectures given at Harvard University, Corrected fifth printing
  of the second (1992) edition.

\bibitem[SGA03]{SGA1}
{\em Rev\^{e}tements \'{e}tales et groupe fondamental ({SGA} 1)}, volume~3 of
  {\em Documents Math\'{e}matiques (Paris) [Mathematical Documents (Paris)]}.
\newblock Soci\'{e}t\'{e} Math\'{e}matique de France, Paris, 2003.
\newblock S\'{e}minaire de g\'{e}om\'{e}trie alg\'{e}brique du Bois Marie
  1960--61. [Algebraic Geometry Seminar of Bois Marie 1960-61], Directed by A.
  Grothendieck, With two papers by M. Raynaud, Updated and annotated reprint of
  the 1971 original [Lecture Notes in Math., 224, Springer, Berlin; MR0354651
  (50 \#7129)].

\bibitem[{Sta}20]{stacks-project}
The {Stacks Project Authors}.
\newblock \textit{Stacks Project}.
\newblock \url{https://stacks.math.columbia.edu}, 2020.

\bibitem[Str00]{strickland_grosshopkins}
N.~P. Strickland.
\newblock Gross-{H}opkins duality.
\newblock {\em Topology}, 39(5):1021--1033, 2000.

\bibitem[Tho97]{thomason_classification}
R.~W. Thomason.
\newblock The classification of triangulated subcategories.
\newblock {\em Compositio Math.}, 105(1):1--27, 1997.

\bibitem[Was97]{washington}
Lawrence~C. Washington.
\newblock {\em Introduction to cyclotomic fields}, volume~83 of {\em Graduate
  Texts in Mathematics}.
\newblock Springer-Verlag, New York, second edition, 1997.

\end{thebibliography}
\end{document}